\documentclass[12pt]{amsart}

\usepackage[pagebackref=true,colorlinks,linkcolor=blue,citecolor=magenta]{hyperref}
\usepackage{amsmath,amssymb,latexsym} % ams auxiliaries.
\usepackage{graphicx}
\usepackage{fullpage}
\usepackage[square, numbers]{natbib}   % Arguably a nicer bibliography.
\usepackage{comment}
\usepackage{etoolbox}

\newtheorem{theorem}{Theorem} % If subsection, get 3.1.2, etc.
\newtheorem{corollary}{Corollary}
\newtheorem{proposition}{Proposition}
\newtheorem{claim}{Claim}
\newtheorem{lemma}{Lemma}

\theoremstyle{definition}

\theoremstyle{remark}

\newtheorem*{proof-claim}{Proof}

\newtheorem{prealphthm}{{\bf Theorem}}

\newtheorem{prealphlemma}{{\bf Lemma}}

\newenvironment{changemargin}[2]{\begin{list}{}{%
\setlength{\topsep}{0pt}%
\setlength{\leftmargin}{0pt}%
\setlength{\rightmargin}{0pt}%
\setlength{\listparindent}{\parindent}%
\setlength{\itemindent}{\parindent}%
\setlength{\parsep}{0pt plus 1pt}%
\addtolength{\leftmargin}{#1}%
\addtolength{\rightmargin}{#2}%
}\item }{\end{list}}

\AtBeginEnvironment{proof-claim}{\vspace{-0.5cm}\footnotesize\begin{changemargin}{0.5cm}{0.5cm}}
\AtEndEnvironment{proof-claim}{\end{changemargin}}

\def\K{\mathsf{K}}
\def\L{\mathsf{L}}
\def\R{\mathbb{R}}
\def\Z{\mathbb{Z}}
\def\T{\mathsf{T}}

\def\S{\mathcal{S}}

\def\ee{\boldsymbol{e}}
\def\ff{\boldsymbol{f}}

\def\xx{\boldsymbol{x}}
\def\yy{\boldsymbol{y}}
\def\zz{\boldsymbol{z}}

\def\zero{\boldsymbol{0}}

\def\ds{\displaystyle}
\def\id{\operatorname{id}}
\def\KG{\operatorname{KG}}
\def\cd{\operatorname{cd}}

\def\alt{\operatorname{alt}}
\def\H{\mathcal{H}}
\def\id{\operatorname{id}}

\def\sd{\operatorname{sd}}
\def\mod{\operatorname{mod}}
\def\ind{\operatorname{ind}}
\def\coind{\operatorname{coind}}
\def\Xind{\operatorname{Xind}}
\def\Hom{\operatorname{Hom}}
\def\NP{\operatorname{NP}}

\title{Strengthening topological colorful results\\ for graphs \footnote{\copyright~2017~ This manuscript version is made available under the CC-BY-NC-ND 4.0 license \url{http://creativecommons.org/licenses/by-nc-nd/4.0}}}

\author{Meysam Alishahi}
\address{M. Alishahi,
School of Mathematical Sciences,
Shahrood University of Technology, Shahrood, Iran}
\email{meysam\_alishahi@shahroodut.ac.ir}
\author{Hossein Hajiabolhassan}
\address{H. Hajiabolhassan,
Department of Mathematical Sciences,
Shahid Beheshti University, P.O. Box 169411, Tehran, Iran \newline
School of Mathematics, Institute for Research in Fundamental Sciences (IPM), P.O. Box 19395-5746, Tehran, Iran}
\email{hhaji@sbu.ac.ir}
\author{Fr\'ed\'eric Meunier}
\address{F. Meunier, Universit\'e Paris Est, CERMICS, 77455 Marne-la-Vall\'ee CEDEX, France}
\email{frederic.meunier@enpc.fr}

\keywords{Box complex; circular chromatic number; chromatic number; Fan's lemma; Hom complex; topological bounds.}

\begin{document}

\maketitle

\begin{abstract}
Various results ensure the existence of large complete and colorful bipartite graphs in properly colored graphs when some condition related to a topological lower bound on the chromatic number is satisfied. We generalize three theorems of this kind, respectively due to Simonyi and Tardos ({\em Combinatorica}, 2006), Simonyi, Tardif, and Zsb\'an ({\em The Electronic Journal of Combinatorics}, 2013), and Chen ({\em Journal of Combinatorial Theory, Series A}, 2011). As a consequence of the generalization of Chen's theorem, we get new families of graphs whose chromatic number equals their circular chromatic number and that satisfy Hedetniemi's conjecture for the circular chromatic number.
\end{abstract}

\section{Introduction}\label{intro}

The famous proof of the Kneser conjecture by Lov\'asz opened the door to the use of topological tools in combinatorics, especially for finding topological obstructions for a graph $G$ to have a small chromatic number. These obstructions actually often ensure the existence of large complete bipartite subgraphs with many colors in any proper coloring of $G$. Finding new conditions for the existence of such bipartite graphs has in particular become an active stream of research relying on the use of topological tools. 
%Some results ensure the existence of {\em heterochromatic} or {\em colorful} complete bipartite subgraphs. 
%By heterochromatic, we mean that all vertices get pairwise distinct colors, and by colorful, we mean that the number of colors present in the subgraph is equal to the chromatic number of the graph. 
Apart of being elegant, these results have also turned out to be useful, e.g., for providing lower bounds for various types of chromatic numbers. The {\em zig-zag theorem}, the {\em colorful $K_{\ell,m}$ theorem}, and the {\em alternative Kneser coloring lemma} are among the most prominent results of this stream and our main objective is to provide generalizations of them. \\

The topological obstructions often take the form of a lower bound provided by a topological invariant attached to the graph $G$. One of these topological invariants is the ``coindex of the box-complex'' $\coind(B_0(G))$  (the exact definition of this topological invariant, as well as others, is recalled later, in Section~\ref{subsubsec:graphcomplex}). It has been shown that $$\chi(G)\geq\coind(B_0(G))+1.$$ The zig-zag theorem, due to Simonyi and Tardos~\cite{SiTa06}, is a generalization of this inequality and ensures that any proper coloring of $G$ contains a heterochromatic complete bipartite subgraph $K_{\lceil\frac t 2\rceil,\lfloor \frac t 2\rfloor}$, where $t=\coind(B_0(G))+1$. {\em Heterochromatic} means that the vertices get pairwise distinct colors. In Section~\ref{sec:zigzag}, we provide the exact statement of the zig-zag theorem and prove a   result (Theorem~\ref{thm:path_colorful}) that extends it roughly as follows. Two complete bipartite subgraphs are {\em adjacent} if one is obtained from the other by removing a vertex and adding a vertex, possibly the same if the vertex is added to the other side of the bipartite graph. We prove that in any proper coloring of $G$, there is a sequence of adjacent ``almost'' heterochromatic complete bipartite subgraphs, which starts with a heterochromatic complete bipartite graph as in the original statement of the zig-zag theorem and ends at the same bipartite graph, with the two sides interchanged. \\

While the zig-zag theorem holds for any properly colored graph, the colorful $K_{\ell,m}$ theorem, also due to Simonyi and Tardos~\cite{SiTa07}, requires $\chi(G)$ to be equal to $\coind(B_0(G))+1$. Several families of graphs satisfy this equality, e.g., Kneser, Schrijver, and Borsuk graphs, and the Mycielski construction keeps the equality when applied to a graph satisfying it, see the aforementioned paper for details. Assume that such a graph $G$ is properly colored with a minimum number of colors and consider a bipartition $I,J$ of the color set. The colorful $K_{\ell,m}$ theorem ensures then that there is a complete bipartite subgraph with the colors in $I$ on one side, and the colors in $J$ on the other side. The name of the theorem reminds that, contrary to the zig-zag theorem, one can choose the sizes of the sides of the bipartite graph. {\em Colorful}, here, and in the remaining of the paper, means that all colors appear in the subgraph.

In Section~\ref{sec:colorful}, we show that the colorful $K_{\ell,m}$ theorem remains true when $\coind(B_0(G))+1$ is replaced by the ``cross-index of the Hom complex'', a combinatorial invariant providing a better bound on the chromatic number. This answers an open question by Simonyi, Tardif, and Zsb\'an, see p.6 of \cite{SiTaZs13}.\\

A hypergraph $\H$ is a pair $(V,E)$ where $V$ -- the {\em vertex set} -- is a nonempty finite set and where $E$ -- the {\em edge set} -- is a finite collection of subsets of $V$. A hypergraph is {\em nonempty} if it has at least one edge. A {\em singleton} is an edge with a single vertex. Given a hypergraph $\H$, the {\em general Kneser graph} $\KG(\H)$ is a graph whose vertices are the edges of $\H$ and in which two vertices are connected if the corresponding edges are disjoint. The {\em $2$-colorability defect} of $\H$, denoted by $\cd_2(\H)$, is the minimal number of vertices to remove from $\H$ such that the remaining partial hypergraph is $2$-colorable:
$$\cd_2(\H)=\min\left\{|U|: \left(V(\H)\setminus U,\{e\in E(\H):\;e\cap U=\varnothing\}\right)\mbox{ is $2$-colorable}\right\}.$$
A hypergraph is {\it $2$-colorable} whenever it is possible to color its 
vertices using $2$ colors in such a way that no edge is monochromatic. 
Dol'nikov~\cite{Do88} proved that the colorability defect of $\H$ is a lower bound of the chromatic number of $\KG(\H)$:
\begin{equation}\label{eq:dolnikov}
\chi(\KG(\H))\geq\cd_2(\H).
\end{equation}
Dol'nikov's inequality actually provides a lower bound for the chromatic number of every graph since for every simple graph $G$, there exists a hypergraph $\H$, a {\em Kneser representation of $G$}, such that $\KG(\H)$ and $G$ are isomorphic. Denote by $K_{q,q}^*$ the bipartite graph obtained from the complete bipartite graph $K_{q,q}$ by removing a perfect matching. 

In Section~\ref{sec:alternative}, we prove the following result, which shows that when Dolnikov's inequality is an equality, any proper coloring with a minimal number of colors presents a special pattern.
\begin{corollary}\label{cor:cd2}
Let $\H$ be a nonempty hypergraph such that $\chi(\KG(\H))=\cd_2(\H)=t$ and with no singletons. Any proper coloring of $\KG(\H)$ with $t$ colors contains a colorful copy of $K_{t,t}^*$ with all colors appearing on each side.
\end{corollary}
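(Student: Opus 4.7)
The plan is to deduce Corollary~\ref{cor:cd2} from a more general result---the generalization of Chen's theorem that Section~\ref{sec:alternative} is devoted to establishing. First, I would observe that Dolnikov's inequality admits the standard topological refinement
\[
\cd_2(\H)\leq\coind(B_0(\KG(\H)))+1\leq\chi(\KG(\H)),
\]
so the hypothesis $\chi(\KG(\H))=\cd_2(\H)=t$ forces each of these three quantities to equal $t$, and in particular the topological lower bound $\coind(B_0(\KG(\H)))+1$ is attained by $\chi(\KG(\H))$. The corollary then follows by invoking the main theorem of that section, which asserts the conclusion ``colorful $K_{t,t}^*$ with all colors appearing on each side'' as soon as this topological bound is tight and $\H$ has no singletons.

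\medskip

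To prove that main theorem, I would embed $V(\H)$ in general position on the sphere $S^{t-1}\subset\mathbb{R}^t$ so that no great circle carries more than $t$ vertices of $\H$. Given a proper $t$-coloring $c$ of $\KG(\H)$, I would then construct a $\Z_2$-equivariant labeling of a sufficiently fine centrally symmetric triangulation of $S^{t-1}$ by labels in $\{\pm 1,\ldots,\pm t\}$: a vertex $x$ receives the label $+i$ (resp.\ $-i$) where $i$ is the smallest color appearing on an edge of $\H$ entirely contained in the open hemisphere $H^{+}(x)$ (resp.\ $H^{-}(x)$). The defect bound $\cd_2(\H)\geq t$ guarantees that at least one of the two hemispheres always contains such an edge, and an appropriate tie-breaking convention rules out complementary labels. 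Ky Fan's lemma then yields an alternating simplex whose labels are pairwise distinct in absolute value and cover $[t]$; passing to a finer and finer triangulation produces a critical antipodal pair $\{x^\star,-x^\star\}$ such that, for every color $i\in[t]$, each of the two open hemispheres of $x^\star$ hosts an edge of $\H$ of color $i$ meeting the equator of $x^\star$ along a single vertex of $\H$.

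\medskip

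The main obstacle is to pair these hemispherical edges so that the \emph{only} non-edges of the resulting bipartite graph are a perfect matching, turning what is \textit{a priori} a $K_{t,t}$ inside $\KG(\H)$ into the desired $K_{t,t}^{*}$. Concretely, the $t$ vertices on the equator of $x^\star$ must be distinct and must induce a perfect matching between the $2t$ witnessing edges via the shared equatorial vertex of each matched pair $(A_i,B_i)$. General position on $S^{t-1}$ makes this possible (an equator carries at most $t$ vertices of $\H$, matching the $t$ colors), and the ``no singleton'' hypothesis prevents the degenerate case where some $A_i$ or $B_i$ collapses to a single equatorial vertex and thereby destroys the intended adjacency pattern. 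Leveraging Ky Fan's lemma precisely enough to deliver this matched structure, rather than merely a large colorful bipartite subgraph, is the technical heart of the argument.
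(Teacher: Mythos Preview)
Your reduction in the first paragraph is essentially how the paper proceeds, though the paper does not phrase the hypothesis of the main theorem (Theorem~\ref{thm:main}) as ``$\coind(B_0(\KG(\H)))+1=\chi(\KG(\H))$''. Instead it introduces the combinatorial notion of a \emph{nice} hypergraph (via the alternation number $\alt_\sigma(\H)$) and proves separately (Lemma~\ref{lem:cd2}) that $\chi(\KG(\H))=\cd_2(\H)$ together with ``no singletons'' implies niceness. Your chain of equalities does force $n-\alt(\H)=t$, but niceness also contains a second, more subtle clause about sign vectors with $\alt(\xx)\geq\alt_\sigma(\H)$ and $|\xx|>\alt_\sigma(\H)$, and that is what the proof of Theorem~\ref{thm:main} actually exploits.

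The sketch you give for the main theorem, however, has a genuine gap and also diverges substantially from the paper's argument. A single application of Fan's lemma on $S^{t-1}$ with labels in $\{\pm 1,\ldots,\pm t\}$ produces one alternating $(t-1)$-simplex, whose labels are forced to be $\{-1,+2,-3,\ldots\}$; this places the odd colors in one hemisphere and the even colors in the other. It does \emph{not} give, for every color $i$, an edge of color $i$ in \emph{each} of the two hemispheres --- that is precisely the gap between the zig-zag theorem and the $K_{t,t}^*$ conclusion. Your limiting ``critical antipodal pair'' does not bridge it: no amount of refinement turns a single alternating simplex into a configuration with all $t$ colors on both sides. The paper closes this gap with Chen's parity trick (Lemma~\ref{lem:chen}): using that the number of negative-alternating chains is \emph{odd}, one finds two chains $\xx_1\preceq\cdots\preceq\xx_n$ and $\yy_1\preceq\cdots\preceq\yy_n$ that agree at level $\gamma=n-t$ up to a global sign flip, $\xx_\gamma=-\yy_\gamma$. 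It is this antipodal coincidence at a specific level, not a compactness limit, that produces the common ``equatorial'' index set and forces $a_i=b_i$ for all $i$ (Claim~\ref{claim:aisb}), yielding the perfect-matching structure of $K_{t,t}^*$.

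Concretely, the paper's proof of Theorem~\ref{thm:main} is entirely combinatorial: it works on the poset $(\{+,-,0\}^n\setminus\{\zero\},\preceq)$, builds a carefully designed map $\lambda$ (mixing $\alt(\xx(j))$, $|\xx(j)|$, and the coloring $c$ across three regimes determined by the sets $A_j(\xx)$), verifies that $\lambda$ satisfies the hypotheses of Lemma~\ref{lem:chen} with $\gamma=n-t$, and then extracts the $2t$ vertices $\ee_i,\ff_i$ from the two chains. There is no sphere embedding, no general-position argument, and no limiting triangulation. Your geometric outline, even if the gap above were repaired, would constitute a different proof; as written, the step ``each of the two open hemispheres of $x^\star$ hosts an edge of $\H$ of color $i$'' is unjustified.
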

We actually prove a more general result (Theorem~\ref{thm:main}) ensuring the existence of such a $K_{t,t}^*$ in the categorical product of an arbitrary number of general Kneser graphs, where the ``categorical product'' is a way to multiply graphs (it is the product involved in the famous Hedetniemi conjecture). 

The alternative Kneser coloring lemma, found by Chen in 2011~\cite{Ch11}, is the special case of Corollary~\ref{cor:cd2} when $\H$ is the hypergraph $\left([n],{{[n]}\choose k}\right)$ (the complete $k$-uniform hypergraph with $[n]$ as the vertex set) with $k\geq 2$. For this special case, the graph $\KG(\H)$ is  a ``usual'' Kneser graph, denoted by $\KG(n,k)$, which plays an important role in combinatorics and in graph theory, see for instance~\cite{Ha10,SiTa06,SiTa07,St76,VaVe05}. When $k\geq 2$, the usual Kneser graph satisfies the condition of Corollary~\ref{cor:cd2} with $t=n-2k+2$ (the fact that the chromatic number is $n-2k+2$ is the Lov\'asz theorem~\cite{Lo79}, originally conjectured by Kneser~\cite{kneser1955}). Note that Corollary~\ref{cor:cd2} is a strict improvement on the colorful $K_{\ell,m}$ theorem for general Kneser graphs satisfying its condition.
%The Kneser graph $\KG(n,k)$ is the graph obtained in this way when $\H$ is the hypergraph $\left([n],{{[n]}\choose k}\right)$, i.e., the complete $k$-uniform hypergraph with $[n]$ as the vertex set. In Section~\ref{sec:alternative}, we prove a theorem (Theorem~\ref{thm:main}) giving a sufficient condition on $\H$ for a colorful copy of $K_{t,t}^*$ with all colors on each side to exist in any proper coloring of $\KG(\H)$ with $t=\chi(\KG(\H))$ colors. ``The $2$-colorability defect'' $\cd_2(\H)$  is a combinatorial parameter attached to any hypergraph $\H$, which provides a lower bound to $\chi(\KG(\H))$ (Dol'nikov's theorem). Our theorem implies in particular that if $\chi(\KG(\H))=\cd_2(\H)$ and $\H$ has no singletons, then we have the existence of the colorful $K_{t,t}^*$. Since $\chi(\KG(n,k))=\cd_2\left([n],{{[n]}\choose k}\right)=n-2k+2$, Chen's theorem is a special case. Our theorem applies actually on the categorical product of an arbitrary number of general Kneser graphs, where the ``categorical product'' is a way to multiply graphs (it is the product involved in the famous Hedetniemi conjecture).
The existence of such colorful bipartite subgraphs has immediate consequences for the circular chromatic number, an important notion in graph coloring theory, see Section~\ref{subsec:circ} for the definition of the circular chromatic number and for these consequences. \\

Motivated by Corollary~\ref{cor:cd2}, we end the paper in Section~\ref{sec:cd} with a discussion on hypergraphs $\H$ such that $\chi(\KG(\H))=\cd_2(\H)$. It turns out that this question seems to be difficult. We can describe procedures to build such hypergraphs, but a general characterization is still missing. We do~not even know what is the complexity of deciding whether $\H$ satisfies $\chi(\KG(\H))=\cd_2(\H)$.\\

Before stating and proving these results, Section~\ref{sec:basic} presents the main topological tools used in the paper. It contains no new results but we assume that the reader has basic knowledge in topological combinatorics. More details on that topic can be found for instance in the books by Matou{\v{s}}ek~\cite{Matoubook} or De Longueville~\cite{DeLongbook}.

\section{Topological tools}\label{sec:basic}

\subsection{Alternation number}\label{subsec:alt_cd}

%The {\em $2$-colorability defect} of a hypergraph $\H$, denoted $\cd_2(\H)$, is the minimal number of vertices to remove from $\H$ such that the remaining partial hypergraph is $2$-colorable:
%$$\cd_2(\H)=\min\left\{|U|: \left(V(\H)\setminus U,\{e\in E(\H):\;e\cap U=\varnothing\}\right)\mbox{ is $2$-colorable}\right\}.$$
%Dol'nikov~\cite{Do88} proved that the colorability defect of $\H$ is a lower bound of the chromatic number of $\KG(\H)$:
%\begin{equation}\label{eq:dolnikov}
%\chi(\KG(\H))\geq\cd_2(\H),
%\end{equation}
%where $\KG(\H)$ is the general Kneser graph defined by the hypergraph $\H$, see Section~\ref{sec:intro} for the definition of general Kneser graphs. Here we assume that $\H$ is {\em nonempty}, which means that it has at least one edge.
%
%Dol'nikov's inequality provides a generalization of the Lov\'asz theorem, which states that $\chi(\KG(n,k))=n-2k+2$. Indeed, the colorability defect of the $k$-uniform complete hypergraph on $n$ vertices is exactly $n-2k+2$, as it can be easily checked, and the inequality $\chi(\KG(n,k))\leq n-2k+2$ is easy. 

An improvement on the $2$-colorability defect is the alternation number, which provides a better lower bound on the chromatic number of general Kneser graphs. It has been defined by the first and second authors~\cite{AlHa15}. Let $\xx=(x_1,\ldots,x_n)\in\{+,-,0\}^n$. An {\em alternating subsequence} of $\xx$ is a sequence $x_{i_1},\ldots,x_{i_{\ell}}$ of nonzero terms of $\xx$, with $i_1<\cdots<i_{\ell}$, such that any two consecutive terms of this sequence are different. We denote by $\alt(\xx)$ the length of a longest alternating subsequence of $\xx$. 
In the case $\xx=\zero$, we define $\alt(\xx)$ to be $0$. For a nonempty hypergraph $\H$ and a bijection $\sigma:[n]\rightarrow V(\H)$, we define $$\alt_{\sigma}(\H)=\max_{\xx\in\{+,-,0\}^n}\big\{\alt(\xx):\;\mbox{none of $\sigma(\xx^+)$ and $\sigma(\xx^-)$ contains an edge of }\H\big\},$$ where $$\xx^+=\{i:\;x_i=+\}\qquad\mbox{and}\qquad\xx^-=\{i:\;x_i=-\}.$$
 The {\em alternation number} of $\H$ is the quantity $\alt(\H)=\min_{\sigma}\alt_{\sigma}(\H)$, where the minimum is taken over all bijections $\sigma:[n]\rightarrow V(\H)$. The following inequality is proved in the same paper
\begin{equation}\label{eq:alter}
\chi(\KG(\H))\geq n-\alt(\H).
\end{equation}
Note that this implies $\chi(\KG(\H))\geq n-\alt_\sigma(\H)$ for any bijection 
$\sigma: [n]\rightarrow V(\H)$ and thus it implies Dol'nikov's inequality~\eqref{eq:dolnikov} since $n-\alt_{\sigma}(\H)\geq\cd_2(\H)$ holds for every bijection $\sigma:[n]\rightarrow V(\H)$.
%It implies Dol'nikov's inequality~\eqref{eq:dolnikov} since $n-\alt_{\sigma}(\H)\geq\cd_2(\H)$ holds for every bijection $\sigma:[n]\rightarrow V(\H)$.
The first two authors have applied inequality~\eqref{eq:alter} on various families of graphs~\cite{2014arXiv1401.0138A,AlHa14,2014arXiv1407.8035A,AlHa15_arxiv,2015arXiv150708456A}.

\subsection{Box complex, Hom complex, and indices}

\subsubsection{Some topological notions}

A {\em $\Z_2$-space} $X$ is a topological space with a homeomorphism $\nu:X\rightarrow X$, the {\em $\Z_2$-action on $X$}, such that $\nu\circ\nu=\id_X$. If $\nu$ has no fixed points, then $\nu$ is {\em free}. In that case, $X$ is also called free. A {\em $\Z_2$-map} $f:X\rightarrow Y$ between two $\Z_2$-spaces is a continuous map such that $f\circ\nu=\omega\circ f$, where $\nu$ and $\omega$ are the $\Z_2$-actions on $X$ and $Y$ respectively. Given two $\Z_2$-spaces $X$ and $Y$, we write $X \stackrel{\Z_2}{\longrightarrow} Y$ if there exists a $\Z_2$-map $X\rightarrow Y$.

The {\em $\Z_2$-index} and {\em $\Z_2$-coindex} of a $\Z_2$-space $X$ are defined as follows:
$$\ind(X)=\min\{d\geq 0:\; X \stackrel{\Z_2}{\longrightarrow}\S^d\}\qquad\mbox{and}\qquad\coind(X)=\max\{d\geq 0:\; \S^d \stackrel{\Z_2}{\longrightarrow} X\}, $$
where the $\Z_2$-action on the $d$-dimensional sphere $\S^d$ is the (free) antipodal map $-:\xx\rightarrow -\xx$. The celebrated Borsuk-Ulam theorem states that $\ind(\S^d)=\coind(\S^d)=d$. It also implies that $\ind(X)\geq\coind(X)$. \\

These definitions extend naturally to simplicial complexes. A {\em simplicial $\Z_2$-complex} $\K$ is a simplicial complex with a simplicial map $\nu:\K\rightarrow\K$, the {\em $\Z_2$-action on $\K$}, such that $\nu\circ\nu=\id_{\K}$. The underlying space of $\K$, denoted by $\|\K\|$, becomes a $\Z_2$-space if we take the affine extension of $\nu$ as a $\Z_2$-action on $\|\K\|$. If $\nu$ has no fixed simplices, then $\nu$ is {\em free} and $\K$ is a {\em free simplicial $\Z_2$-complex}. If we take the affine extension of $\nu$ as a $\Z_2$-action on $\|\K\|$, then $\K$ is free if and only if $\|\K\|$ is free. A {\em simplicial $\Z_2$-map} $\lambda:\K\rightarrow\L$ is a simplicial map between two simplicial $\Z_2$-complexes such that $\lambda\circ\nu=\omega\circ\lambda$, where $\nu$ and $\omega$ are the $\Z_2$-actions of $\K$ and $\L$ respectively. Given two simplicial $\Z_2$-complexes $\K$ and $\L$, we write $\K \stackrel{\Z_2}{\longrightarrow} \L$ if there exists a simplicial $\Z_2$-map $\K\rightarrow\L$. Note that if $\K \stackrel{\Z_2}{\longrightarrow} \L$, then $\|\K\| \stackrel{\Z_2}{\longrightarrow}\|\L\|$, but the converse is in general not true.

The $\Z_2$-index and $\Z_2$-coindex of a simplicial $\Z_2$-complex $\K$ are defined as $\ind(\K)=\ind(\|\K\|)$ and $\coind(\K)=\coind(\|\K\|)$, where again the $\Z_2$-action on $\|\K\|$ is the affine extension of the $\Z_2$-action on $\K$. \\

A {\em free $\Z_2$-poset} $P$ is an ordered set with a fixed-point free automorphism $-:P\rightarrow P$ of order $2$ (being understood that this automorphism is order-preserving). An {\em order-preserving $\Z_2$-map} between two free $\Z_2$-posets
$P$ and $Q$ is an order-preserving map $\phi:P\rightarrow Q$ such that $\phi(-x)=-\phi(x)$ for every $x\in P$. If there exists such a map, we write $P \stackrel{\Z_2}{\longrightarrow} Q$. For a nonnegative integer $n$, let $Q_n$ be the free $\Z_2$-poset with elements $\{\pm 1,\ldots,\pm (n+1)\}$ such that for any two members $x$ and $y$ in $Q_n$, we have $x<_{Q_n}y$ if $|x|<|y|$. For a free $\Z_2$-poset $P$, the {\em cross-index} of $P$, denoted by $\Xind(P)$, is the minimum $n$ such that there is a $\Z_2$-map from $P$ to $Q_n$. If $P$ has no elements, we define $\Xind(P)$ to be $-1$.%We emphasize that there is a shift in the index of $Q_n$ with respect to the original definition given by Simonyi, Tardif, and Zsb\'an~\cite{SiTaZs13}.

Given a poset $P$, its {\em order complex}, denoted by $\Delta P$, is the simplicial complex whose simplices are the chains of $P$. If $P$ is a free $\mathbb{Z}_2$-poset, then $\Delta P$ is a free simplicial $\Z_2$-complex, i.e., no chains are fixed under the automorphism $-$. Any order-preserving $\Z_2$-map between two free $\Z_2$-posets $P$ and $Q$ lifts naturally to a $\Z_2$-simplicial map between $\Delta P$ and $\Delta Q$. Since  there is a $\Z_2$-map from $\|\Delta Q_n\|$ to $\S^n$, we have $\Xind(P)\geq\ind(\Delta P)$ for any free $\Z_2$-poset $P$. In the sequel, we write $\ind(P)$ for $\ind(\Delta P)$.

\subsubsection{Complexes for graphs}\label{subsubsec:graphcomplex}

Let $G=(V,E)$ be a graph. The {\em box complex of $G$}, denoted by $B_0(G)$, has $V \uplus V = V\times[2]$ as the vertex set and
$$\{A\uplus B:\ A,B\subseteq V,\ A\cap B=\varnothing,\ G[A,B] \mbox{ is a complete bipartite graph} \}$$ as the simplex set, where $G[A,B]$ is the bipartite graph whose sides are $A$ and $B$ and whose edges are those of $G$ having one endpoint in $A$ and the other in $B$. 
%{\color{blue} We say that $G[A,B]$ is complete if it contains all possible edges with one endpoint in $A$ and the other in $B$. }
The $\Z_2$-action $\nu\colon V\uplus V\rightarrow V \uplus V$ on $B_0(G)$ is given by interchanging the two copies of $V$;
that is, $\nu(v,1)=(v,2)$ and $\nu(v, 2)=(v, 1)$, for any $v \in V $.
Clearly, this $\Z_2$-action makes $B_0(G)$ a free simplicial $\Z_2$-complex. %One can check that any graph homomorphism $G\rightarrow H$ lifts naturally to a $\Z_2$-maps $B_0(G)\stackrel{\Z_2}{\longrightarrow} B_0(H)$. Consequently, $\ind(B_0(G))\leq \ind(B_0(H))$, and $\coind}(B_0(G))\leq \coind}(B_0(H))$. Since $B_0(K_n)$ is $\Z_2$-homotopy equivalent to $S^{n-1}$, we have $\chi(G)\geq\ind(B_0(G))+1\geq\coind(B_0(G))+1$.

There is also another box complex, denoted by $B(G)$, which differs from $B_0(G)$ in that if one of $A$ or $B$ is empty, the vertices in the other one must still have a common neighbor. Since we are not using this complex any further, we are not giving more details on it.

The {\em Hom complex} $\Hom(K_2, G)$ of a graph $G$ is a free $\Z_2$-poset consisting of all pairs $(A,B)$ such that $A$ and $B$ are nonempty disjoint subsets of $V$ and such that $G[A,B]$ is a complete bipartite graph. The partial order of this poset is the inclusion $\subseteq$ extended to pairs of subsets of $V$:
$$(A,B)\subseteq (A',B')\quad\mbox{if $A\subseteq A'$ and $B\subseteq B'$}.$$
The fixed-point free automorphism $-$ on $\Hom(K_2, G)$ is defined by $-(A,B)=(B,A)$. The definitions of the box complex $B_0(G)$ and of the Hom complex have some similarity. To understand their relationships further, see~\cite{SiTa06} for instance.

These complexes can be used to provide lower bounds on the chromatic number of $G$, see \cite{AlHa14,AlHa15,MaZi04,SiTaZs13}.
They are related by the following chain of inequalities,
\begin{equation}\label{lbchrom}
\begin{array}{lll}
\chi(G) & \geq & \Xind(\Hom(K_2, G))+2   \geq \ind(\Hom(K_2,G))+2\geq \ind(B_0(G))+1\\
& \geq & \coind(B_0(G))+1 \geq |V(\H)|-\alt(\H) \geq\cd_2(\H),
\end{array}
\end{equation}
where $\H$ is any Kneser representation of $G$.

\subsection{Fan's lemma and its variations}\label{subsec:kyfan}

Fan's lemma, which plays an important role in topology and in combinatorics, is the following theorem.

\begin{theorem}[Fan lemma~\cite{Fa56}]\label{thm:kyfan}
Consider a centrally-symmetric triangulation $\T$ of $\S^d$ and let $\lambda:V(\T)\rightarrow\{\pm 1,\ldots,\pm m\}$ be a labeling satisfying the following properties:
\begin{itemize}
\item it is antipodal: $\lambda(-v)=-\lambda(v)$ for every $v\in V(\T)$
\item it has no complementary edges: $\lambda(u)+\lambda(v)\neq 0$ for every pair  $uv$ of adjacent vertices of $\T$.
\end{itemize}
Then there is an odd number of $d$-dimensional simplices $\sigma$ of $\T$ such that $\lambda(V(\sigma))$ is of the form $$\{-j_1,+j_2,\ldots,(-1)^{d+1}j_{d+1}\}$$ for some $1\leq j_1<\cdots<j_{d+1}\leq m$. In particular, $m\geq d+1$.
\end{theorem}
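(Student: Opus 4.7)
The plan is to prove the lemma by induction on $d$ via a door-counting argument on a hemisphere. For the base case $d=0$, the sphere $\S^0$ consists of two antipodal vertices with opposite labels, so exactly one carries a negative label, giving the unique $0$-simplex $\{-j_1\}$ of the required form.

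For the inductive step I would first $\Z_2$-equivariantly refine $\T$ (extending the labeling by reusing labels already present on each refined simplex, which automatically preserves antipodality and creates no complementary edges) so that an equatorial $(d-1)$-sphere $E$ becomes a subcomplex, dividing $\T$ into two closed hemispheres $\S^d_+$ and $\S^d_-$ interchanged by the central symmetry $\nu$, with $\S^d_+\cap\S^d_-=E$. For a simplex whose vertex labels have pairwise distinct absolute values, sort its vertices by absolute value and record the resulting sign pattern; call the simplex \emph{negatively} (resp.\ \emph{positively}) \emph{alternating} when this pattern is $(-,+,-,\dots)$ (resp.\ $(+,-,+,\dots)$). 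The goal is to show that the number of negatively alternating $d$-simplices of $\T$ is odd.

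The crux is a case analysis showing that each $d$-simplex $\sigma$ has $0$, $1$, or $2$ positively alternating $(d-1)$-faces --- exactly $1$ when $\sigma$ is itself positively or negatively alternating (remove the vertex of largest, respectively smallest, absolute label), exactly $2$ precisely when $\sigma$'s sorted sign pattern is alternating starting with $+$ except for a single ``stutter'' $s_k=s_{k+1}$ (remove either stuttering vertex), and $0$ otherwise; $d$-simplices with repeated absolute values contribute positively alternating faces in pairs and hence do not affect parity. Summing this count over the $d$-simplices of $\S^d_+$, using that interior $(d-1)$-faces contribute twice while those on $E$ contribute once, yields
\[
\#\{\text{pos-alt }\sigma\subseteq\S^d_+\}+\#\{\text{neg-alt }\sigma\subseteq\S^d_+\}\equiv\#\{\text{pos-alt }(d-1)\text{-simplices on }E\}\pmod 2.
\]

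To conclude, I would apply the inductive hypothesis to the centrally-symmetric triangulation of $E$: its number of negatively alternating $(d-1)$-simplices is odd, and by the central symmetry of $E$ (which flips all signs) the number of positively alternating ones is equal, hence also odd. Applying $\nu$ to transport $\S^d_+$ to $\S^d_-$ identifies positively alternating $d$-simplices in the former with negatively alternating ones in the latter, so the total count over $\T$ equals $\#\{\text{neg-alt in }\S^d_+\}+\#\{\text{pos-alt in }\S^d_+\}$, which is odd; the bound $m\geq d+1$ follows because each such simplex requires $d+1$ distinct absolute labels from $\{1,\dots,m\}$. The main obstacle I expect is the sign-pattern case analysis underlying the $0$/$1$/$2$ count of positively alternating $(d-1)$-faces --- it is elementary but intricate, and must carefully absorb the non-generic case of simplices with repeated absolute values; the preparatory $\Z_2$-equivariant refinement placing the equator inside $\T$ is a routine piecewise-linear maneuver.
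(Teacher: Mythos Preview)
The paper does not prove this theorem: Theorem~\ref{thm:kyfan} is stated with a citation to Fan's original paper~\cite{Fa56} and used as a black box (it underlies Lemma~\ref{lem:kyfan_oct} and is invoked on the equator in the proof of Lemma~\ref{lem:gen_kyfan}). There is therefore no ``paper's own proof'' to compare against.

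That said, your sketch is the standard inductive hemisphere/door-counting argument, essentially Fan's original proof as presented e.g.\ in Matou\v{s}ek's book. The structure is sound: positively alternating $(d-1)$-faces serve as doors, each $d$-simplex has $0$, $1$, or $2$ such doors with exactly one precisely when the simplex is alternating, and the parity of boundary doors on the equator is handled by induction. Two points deserve a little more care when you write it out in full. First, the ``single stutter'' description of the $2$-door case is not quite a complete characterization (there are also non-generic patterns with more than one defect that yield $0$ doors); what you actually need, and what your parity argument uses, is only that the door count is odd iff the simplex is alternating, so it is cleaner to prove that dichotomy directly rather than enumerate the $2$-door cases. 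Second, the equivariant refinement step---replacing $\T$ by a centrally symmetric subdivision in which an equatorial $\S^{d-1}$ is a subcomplex and extending $\lambda$ without creating complementary edges---is routine but should be stated precisely (e.g., take iterated barycentric subdivisions and label each new vertex by the label of any vertex of the simplex it subdivides); note also that this refinement can change the \emph{number} of negatively alternating $d$-simplices, so you should remark that it preserves the \emph{parity} (each original $d$-simplex is replaced by a ball whose internal door-count argument shows the parity of alternating simplices is unchanged), or alternatively assume from the outset---as Fan does---that the triangulation already refines the hemispheres.
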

Such simplices $\sigma$ are {\em negative-alternating simplices}.

This theorem is especially useful to combinatorics in the special case when $\T$ is the first barycentric subdivision of the boundary of the $n$-dimensional cross-polytope. It takes then a purely combinatorial aspect. Before stating this special case, we define a partial order that plays a role in the remaining of the paper. As used in oriented matroid theory, we define $\preceq$ to be the following partial order on $\{+,-,0\}$: $$0\preceq +,\quad 0\preceq -,\quad+\mbox{ and }-\mbox{ are not comparable.}$$ We extend it for sign vectors by simply taking the product order: for $\xx,\yy\in\{+,-,0\}^n$, we have $\xx\preceq\yy$ if the following implication holds for every $i\in[n]$
$$x_i\neq 0\Longrightarrow x_i=y_i.$$

%An interesting combinatorial generalization of the BorsukÐUlam theorem is Ky FanÕs lemma~\cite{MR0051506}. This lemma has been used in several papers to investigate some coloring properties of graphs, see \cite{2013arXiv1302.5394A,MR2763055,MR2837625}.

\begin{lemma}[Octahedral Fan lemma]\label{lem:kyfan_oct}
Let $m$ and $n$ be positive integers and
$$\lambda:\{+,-,0\}^n\setminus \{\zero\} \longrightarrow\{\pm 1,\ldots,\pm m\}$$
be a map satisfying the following properties:
\begin{itemize}
\item $\lambda(-\xx)=-\lambda(\xx)$ for all $\xx$,
\item $\lambda(\xx)+\lambda(\yy)\neq 0$ when $\xx\preceq\yy$.
\end{itemize}
Then there is an odd number of chains
$\xx_1\preceq \cdots \preceq \xx_n$ such that
$\ds\left\{\lambda(\xx_1),\ldots,\lambda(\xx_n)\right\}$ is of the form $$\{-j_1,+j_2,\ldots,(-1)^nj_n\}$$ for some $1\leq j_1<\cdots<j_n\leq m$. In
particular, $m \geq n$.
\end{lemma}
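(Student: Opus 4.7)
The plan is to deduce the lemma from Fan's lemma (Theorem~\ref{thm:kyfan}) applied to the first barycentric subdivision $\T=\sd(\partial\Diamond_n)$ of the boundary of the $n$-dimensional cross-polytope $\Diamond_n=\conv\{\pm e_1,\ldots,\pm e_n\}$, which is a centrally-symmetric triangulation of $\S^{n-1}$. The vertices of $\T$ are the proper nonempty faces of $\partial\Diamond_n$, and each such face is determined by a subset of $\{\pm e_1,\ldots,\pm e_n\}$ containing at most one of $\{+e_i,-e_i\}$ for each $i$; encoding this subset by a sign vector whose $i$-th entry is $+$, $-$, or $0$ according to whether $+e_i$, $-e_i$, or neither is present yields a bijection between $V(\T)$ and $\{+,-,0\}^n\setminus\{\zero\}$. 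Under this bijection, the antipodal action on $\T$ corresponds to $\xx\mapsto-\xx$, and face inclusion in $\partial\Diamond_n$ corresponds exactly to the partial order $\preceq$ on sign vectors.

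I would then transport $\lambda$ to a labeling $\tilde\lambda\colon V(\T)\to\{\pm 1,\ldots,\pm m\}$ via this bijection and verify the hypotheses of Theorem~\ref{thm:kyfan} with $d=n-1$. Antipodality of $\tilde\lambda$ follows immediately from the first hypothesis on $\lambda$. For the no-complementary-edges condition, the key observation is that two vertices of $\T$ are adjacent (they lie in a common simplex of the barycentric subdivision) exactly when the corresponding faces of $\partial\Diamond_n$ are nested, i.e., exactly when the corresponding sign vectors $\xx,\yy$ are comparable under $\preceq$; the second hypothesis on $\lambda$ then gives $\tilde\lambda(\xx)+\tilde\lambda(\yy)\neq 0$.

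Finally, I would translate Fan's conclusion back through the bijection. An $(n-1)$-dimensional simplex of $\sd(\partial\Diamond_n)$ is a maximal flag of proper faces of $\partial\Diamond_n$, one of each dimension from $0$ to $n-1$, which corresponds to a chain $\xx_1\preceq\cdots\preceq\xx_n$ of sign vectors whose supports have sizes $1,2,\ldots,n$. Theorem~\ref{thm:kyfan} therefore produces an odd number of such chains whose label set has the form $\{-j_1,+j_2,\ldots,(-1)^n j_n\}$ for some $1\leq j_1<\cdots<j_n\leq m$, and the inequality $m\geq n$ follows from the existence of at least one such chain. The argument is essentially a dictionary translation, and the only point requiring some care is checking that adjacency in the barycentric subdivision really coincides with comparability under $\preceq$; once this identification is in place the rest is mechanical, so I do not anticipate any serious obstacle.
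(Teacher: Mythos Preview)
Your proposal is correct and matches the paper's own treatment: the paper does not give a separate proof of this lemma but simply presents it as the special case of Fan's lemma (Theorem~\ref{thm:kyfan}) obtained when $\T$ is the first barycentric subdivision of the boundary of the $n$-dimensional cross-polytope, which is exactly the identification you carry out in detail.
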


The following result is proved with the help of Lemma~\ref{lem:kyfan_oct}. It is implicitly used 
in the proof of the alternative Kneser coloring lemma in \cite{Ch11}. Here, we state it as 
a lemma, with a proof for the sake of completeness.

\begin{lemma}\label{lem:chen}
Consider an order-preserving $\Z_2$-map $\lambda:\left(\{+,-,0\}^n\setminus\{\zero\},\preceq\right)\rightarrow Q_{n-1}$.
Suppose moreover that there is a $\gamma\in[n]$ such that when $\xx\prec\yy$, at most one of $|\lambda(\xx)|$ and $|\lambda(\yy)|$ is equal to $\gamma$.
Then there are two chains $$\xx_{1}\preceq\cdots\preceq\xx_{n}\quad\mbox{and}\quad\yy_{1}\preceq\cdots\preceq\yy_{n}$$ such that
$$\lambda(\xx_{i})=(-1)^ii\quad \mbox{for all $i$}\qquad\mbox{and }\qquad\lambda(\yy_{i})=(-1)^ii \quad\mbox{for $i\neq\gamma$}$$
and such that $\xx_{\gamma}=-\yy_{\gamma}$.
\end{lemma}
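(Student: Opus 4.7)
The plan is to apply Lemma~\ref{lem:kyfan_oct} (Octahedral Fan lemma) twice — first to $\lambda$, then to a perturbed labeling — and afterwards couple the two resulting odd families of alternating chains at position~$\gamma$ by a mod-$2$ count. First, $\lambda$ itself meets the hypotheses of Lemma~\ref{lem:kyfan_oct} with $m=n$: antipodality is part of the assumption, and whenever $\xx\preceq\yy$, order-preservation into $Q_{n-1}$ implies either $\lambda(\xx)=\lambda(\yy)$ or $|\lambda(\xx)|<|\lambda(\yy)|$, so $\lambda(\xx)+\lambda(\yy)\neq 0$. The lemma then delivers an odd-sized family $\mathcal{C}$ of chains $\xx_1\prec\cdots\prec\xx_n$ with labels $\{-j_1,+j_2,\ldots,(-1)^nj_n\}$ for some $1\leq j_1<\cdots<j_n\leq n$; tightness of the range forces $j_i=i$, and the chain order forces $\lambda(\xx_i)=(-1)^ii$. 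This yields the first desired chain $\xx$.

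Next, I would introduce a perturbed labeling $\lambda'$ obtained from $\lambda$ by swapping $+\gamma$ and $-\gamma$: set $\lambda'(\vv)=-\lambda(\vv)$ when $|\lambda(\vv)|=\gamma$ and $\lambda'(\vv)=\lambda(\vv)$ otherwise. The extra hypothesis says the level set $L_\gamma:=\lambda^{-1}\{\pm\gamma\}$ is a $\prec$-antichain, which is precisely what is needed so that the sign swap on $L_\gamma$ preserves both order-preservation (in any comparable pair at most one endpoint lies in $L_\gamma$, so absolute-value comparisons are unaffected) and antipodality (since $L_\gamma$ is $\Z_2$-invariant). A second application of Lemma~\ref{lem:kyfan_oct} to $\lambda'$ produces an odd-sized family $\mathcal{C}'$ of chains $\yy_1\prec\cdots\prec\yy_n$ with $\lambda'(\yy_i)=(-1)^ii$, equivalently $\lambda(\yy_i)=(-1)^ii$ for $i\neq\gamma$ and $\lambda(\yy_\gamma)=(-1)^{\gamma+1}\gamma$.

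It remains to ensure $\yy_\gamma=-\xx_\gamma$, and this coupling step is the main obstacle. For each $\vv\in L_\gamma^+:=\lambda^{-1}\{(-1)^\gamma\gamma\}$, define $a(\vv)=\#\{\xx\in\mathcal{C}:\xx_\gamma=\vv\}$ and $b(\vv)=\#\{\yy\in\mathcal{C}':\yy_\gamma=-\vv\}$; both $\sum_\vv a(\vv)=|\mathcal{C}|$ and $\sum_\vv b(\vv)=|\mathcal{C}'|$ are odd. It suffices to establish the parity identity $a(\vv)\equiv b(\vv)\pmod 2$ for every~$\vv$: the odd-cardinality sets $\{\vv:a(\vv)\text{ odd}\}$ and $\{\vv:b(\vv)\text{ odd}\}$ will then coincide, and any element in their common value yields a valid pair $(\xx,\yy)$. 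To obtain the parity identity, the intended route is to decompose each chain through $\pm\vv$ into a lower half-chain in the downset of $\pm\vv$ and an upper half-chain in the upset, and to match corresponding halves between $\vv$ and $-\vv$ via the coordinate-flipping involution on $\supp(\vv)$. The antichain property of $L_\gamma$ is what should make this local involution compatible with the labels of $\lambda$ and $\lambda'$ modulo~$2$; carrying out this local count carefully is the delicate core of the proof.
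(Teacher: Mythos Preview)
Your overall plan---two applications of the Octahedral Fan lemma and a mod-$2$ coupling at level~$\gamma$---is exactly the paper's strategy, and the target identity $a(\vv)\equiv b(\vv)\pmod 2$ for every $\vv\in L_\gamma^+$ is indeed true. The gap is in how you propose to prove it.

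You define $\lambda'$ by flipping the sign on \emph{all} of $L_\gamma$. This is legitimate (your check that $\lambda'$ remains an order-preserving $\Z_2$-map is correct), but then the two odd totals $|\mathcal C|=\sum_\vv a(\vv)$ and $|\mathcal C'|=\sum_\vv b(\vv)$ give no information about individual $\vv$'s. To rescue this you appeal to a ``coordinate-flipping involution on $\supp(\vv)$'', but that cannot work: on the downset of $\vv$ the involution is just global negation, which turns $\lambda(\xx_i)=(-1)^i i$ into $(-1)^{i+1}i$, the wrong sign pattern for $b_{\text{low}}(\vv)$; and on the upset of $\vv$ there is no reason $\lambda$ should commute with flipping a fixed block of coordinates, since $\lambda$ is an arbitrary order-preserving $\Z_2$-map with no coordinatewise structure. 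So the ``delicate core'' as you have set it up does not close.

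The paper's remedy is much simpler: do not flip all of $L_\gamma$. First use the odd count for $\lambda$ to pick a single $\zz\in L_\gamma$ with $a(\zz)$ odd, and \emph{then} define $\lambda'$ by flipping only at $\zz$ and $-\zz$. The antichain hypothesis on $L_\gamma$ still guarantees that this local flip preserves the Fan-lemma conditions. Now $\lambda$ and $\lambda'$ agree everywhere except at $\pm\zz$, so $\alpha(\xx,\lambda)=\alpha(\xx,\lambda')$ for all $\xx\in\Gamma\setminus\{\pm\zz\}$; together with $\alpha(-\zz,\lambda)=\alpha(\zz,\lambda')=0$ (wrong sign) and the oddness of both totals, this forces $a(\zz)=\alpha(\zz,\lambda)\equiv\alpha(-\zz,\lambda')=b(\zz)\pmod 2$, hence $b(\zz)>0$. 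No involution is needed---the localized flip does the coupling for free.
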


\begin{proof}
Let $\Gamma$ be the set of all vectors $\xx\in \{+,-,0\}^n\setminus\{\zero\}$ such that $|\lambda(\xx)|=\gamma$. %Note that if $\xx_{1}\preceq\cdots\preceq\xx_{n}$ is a chain such that $\lambda(\xx_{i})=(-1)^ii\quad \mbox{for all $i$}$, then exactly one of $\xx_{i}$'s is in $\Gamma$: $\xx_{\gamma}\in \Gamma$.
For $\xx\in \Gamma$, define $\alpha(\xx,\lambda)$ to be the number of chains $\xx_{1}\preceq\cdots\preceq\xx_{n}$ that contain $\xx$ and such that $\lambda(\xx_{i})=(-1)^ii\quad \mbox{for all $i$}$. Such a chain contains exactly one $\xx$ of $\Gamma$.
Thus, using Lemma~\ref{lem:kyfan_oct}, we get that $\ds\sum_{\xx\in \Gamma}\alpha(\xx,\lambda)$ is an odd integer. It implies that there is at least one $\zz\in \Gamma$ such that
$\alpha(\zz,\lambda)$ is odd.
Now, define
$\lambda':(\{+,-,0\}^n\setminus\{\zero\},\preceq)\rightarrow Q_{n-1}$ such that
$$\lambda'(\xx)=\left\{
\begin{array}{ll}
-\lambda(\xx) & \mbox{if } \xx\in\{\zz,-\zz\}\\
\lambda(\xx) & \mbox{otherwise.}
\end{array}\right.$$
Because of the property enjoyed by $\gamma$, when $\xx$ is equal to $\zz$ or $-\zz$, any $\yy$ comparable with $\xx$ is such that $|\lambda(\yy)|\neq\gamma$.
Hence, the map $\lambda'$ still satisfies the second condition 
of Lemma~\ref{lem:kyfan_oct}. Since it satisfies also clearly the first condition, the conclusion of Lemma~\ref{lem:kyfan_oct} holds for $\lambda'$ and $\ds\sum_{\xx\in \Gamma}\alpha(\xx,\lambda')$ is an odd integer. It implies
$$\ds\sum_{\xx\in \Gamma}\alpha(\xx,\lambda) \equiv\ds\sum_{\xx\in \Gamma}\alpha(\xx,\lambda')\quad (\mod 2).$$ Since $\alpha(\xx,\lambda)=\alpha(\xx,\lambda')$ for every $\xx\in \Gamma\setminus\{\zz,-\zz\}$ we have
$$\alpha(\zz,\lambda)+\alpha(-\zz,\lambda)\equiv\alpha(\zz,\lambda')+\alpha(-\zz,\lambda')\quad (\mod 2).$$
We have $\alpha(-\zz,\lambda)=\alpha(\zz,\lambda')=0$ because $\lambda(-\zz)=\lambda'(\zz)$ is not of the right sign. Since $\alpha(\zz,\lambda)$ is odd, we have $\alpha(-\zz,\lambda')>0$.
The inequality $\alpha(\zz,\lambda)>0$ implies that there exists a chain $\xx_{1}\preceq\cdots\preceq\xx_{n}$ such that $\xx_{\gamma}=\zz$ and  $\lambda(\xx_{i})=(-1)^ii\quad \mbox{for all $i$}$.  The inequality $\alpha(-\zz,\lambda')>0$ implies that there exists a chain $\yy_{1}\preceq\cdots\preceq\yy_{n}$ such that $\yy_{\gamma}=-\zz$ and  $\lambda'(\yy_{i})=(-1)^ii\quad \mbox{for all $i$}$.
We have $\xx_{\gamma}=-\yy_{\gamma}=\zz$ and $\lambda'(\yy_{i})=\lambda(\yy_{i})=(-1)^ii$ for all $i\in[n]\setminus\{\gamma\}$.
Therefore, the two chains
$$\xx_{1}\preceq\cdots\preceq\xx_{n}\quad\mbox{and}\quad\yy_{1}\preceq\cdots\preceq\yy_{n}$$ are the desired chains.
\end{proof}

\section{A ``path'' of heterochromatic subgraphs}\label{sec:zigzag}

The exact statement of the zig-zag theorem goes as follows.

\begin{theorem}[Zig-zag theorem~\cite{SiTa06}]\label{thm:zigzag}
Let $G$ be a properly colored graph. Assume that the
colors are linearly ordered. Then $G$ contains a heterochromatic copy of $K_{\lceil\frac t 2\rceil,\lfloor \frac t 2\rfloor}$, where $t=\coind(B_0(G))+1$,
such that the colors appear alternating on the two sides of the bipartite subgraph with respect to their order.
\end{theorem}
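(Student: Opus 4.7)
The plan is to combine the defining property of $\coind(B_0(G))$ with Fan's lemma (Theorem~\ref{thm:kyfan}) in its original sphere-triangulation form. Setting $d=t-1=\coind(B_0(G))$ and letting $c:V(G)\to[m]$ be the given proper coloring with colors ordered by $1<2<\cdots<m$, the definition of the coindex supplies a $\Z_2$-map $\S^d\to\|B_0(G)\|$. Applying equivariant simplicial approximation to a sufficiently fine centrally symmetric triangulation $\T$ of $\S^d$, I extract a simplicial $\Z_2$-map $\mu:\T\to B_0(G)$, after possibly replacing $B_0(G)$ by an iterated barycentric subdivision and then choosing equivariantly, for each vertex $v\in V(\T)$, a vertex $(u_v,i_v)\in V\uplus V$ in the minimal carrier simplex of $\mu(v)$.

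Next, convert the coloring into a Fan labeling. For every $v\in V(\T)$, set
$$\lambda(v) \;=\; (-1)^{i_v-1}\, c(u_v) \;\in\; \{\pm 1,\ldots,\pm m\},$$
i.e.\ $+c(u_v)$ if $\mu(v)$ sits in the first copy of $V$ and $-c(u_v)$ if it sits in the second. Antipodality $\lambda(-v)=-\lambda(v)$ is immediate since the $\Z_2$-action on $B_0(G)$ swaps the two copies of $V$. To check that no edge is complementary, suppose $vv'\in E(\T)$ satisfies $\lambda(v)+\lambda(v')=0$. Then $\mu(v),\mu(v')$ lie on opposite sides of a common simplex $A\uplus B$ of $B_0(G)$ and $c(u_v)=c(u_{v'})$. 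Since $A\cap B=\varnothing$ forces $u_v\neq u_{v'}$, and since $G[A,B]$ is complete bipartite, $u_vu_{v'}$ is an edge of $G$ bearing the same color twice, contradicting properness of $c$.

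Fan's lemma now yields a $d$-simplex $\sigma$ of $\T$ whose labels take the alternating form $\{-j_1,+j_2,\ldots,(-1)^{d+1}j_{d+1}\}$ with $1\leq j_1<j_2<\cdots<j_{d+1}\leq m$ and $d+1=t$. The image $\mu(\sigma)=A\uplus B$ then places on side $B$ (the negatively labeled vertices) pairwise distinct colors $j_1<j_3<j_5<\cdots$, and on side $A$ (the positively labeled ones) pairwise distinct colors $j_2<j_4<j_6<\cdots$, with $|B|\geq\lceil t/2\rceil$ and $|A|\geq\lfloor t/2\rfloor$. Retaining one vertex of each color on each side and invoking that $G[A,B]$ is a complete bipartite subgraph yields the desired heterochromatic copy of $K_{\lceil t/2\rceil,\lfloor t/2\rfloor}$, and the interleaving $j_1<j_2<j_3<\cdots$ forces the colors to alternate between the two sides with respect to the linear order. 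The main obstacle is the initial step of extracting a genuine simplicial $\Z_2$-map from the merely continuous $\Z_2$-map given by the coindex: the equivariant carrier-vertex selection must be done so that endpoints of every edge of $\T$ are guaranteed to land in a common simplex of $B_0(G)$. This is standard equivariant simplicial approximation, and it is the only place where topology rather than combinatorics intervenes; everything after is a direct unpacking of Fan's lemma and of the definition of the box complex.
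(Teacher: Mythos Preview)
Your argument is correct and follows essentially the same route as the paper. The paper does not give a separate proof of Theorem~\ref{thm:zigzag} (it is quoted from~\cite{SiTa06}), but its proof of the stronger Theorem~\ref{thm:path_colorful} begins with exactly your construction: pass from the continuous $\Z_2$-map $\S^{t-1}\to\|B_0(G)\|$ to a simplicial $\Z_2$-map $\mu:\T\to B_0(G)$ via equivariant simplicial approximation, and set $\lambda(v)=\pm c(\mu(v))$ with the sign recording the copy of $V$. The paper then feeds $\lambda$ into Lemma~\ref{lem:gen_kyfan} to obtain the full path of bipartite subgraphs, whereas you feed it directly into Fan's lemma (Theorem~\ref{thm:kyfan}) to obtain a single alternating simplex; for the bare zig-zag statement this suffices, and the paper explicitly remarks that the starting subgraph of its path has the zig-zag property.

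One small simplification: the equivariant simplicial approximation theorem (as cited in the paper from~\cite{DeLZi06}) already produces a simplicial $\Z_2$-map $\T\to B_0(G)$ after refining only the \emph{source} triangulation, so there is no need to subdivide $B_0(G)$ and then perform an equivariant carrier-vertex selection. Your detour through a subdivision of the target is harmless but unnecessary.
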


Simonyi, Tardif, and Zsb\'an~\cite{SiTaZs13} showed that the statement remains true when $t=\Xind(\Hom(K_2,G)) + 2$. It is a generalization since we have the inequality $\Xind(\Hom(K_2,G)) + 2\geq\coind(B_0(G))+1$, which can moreover be strict.

A subgraph $H$ of a properly colored graph $G$ is {\em almost heterochromatic} if it contains at least $|V(H)|-1$ colors. A bipartite graph is {\em balanced} if the size of each side differs by at most two. We remind the reader that two complete bipartite subgraphs are {\em adjacent} if one is obtained from the other by removing a vertex and then adding a vertex, possibly the same one if it is added to the other side of the bipartite graph. The following theorem is our generalization of Theorem~\ref{thm:zigzag}.

\begin{theorem}\label{thm:path_colorful}
Any properly colored graph contains a sequence of adjacent almost heterochromatic complete and balanced bipartite subgraphs with $\coind(B_0(G))+1$ vertices, which starts at a heterochromatic subgraph and ends at the same heterochromatic subgraph with the two sides interchanged.
\end{theorem}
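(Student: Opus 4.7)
Let $d = \coind(B_0(G))$ and $t = d+1$. The plan is to extend the Fan-lemma proof of the zig-zag theorem (Theorem~\ref{thm:zigzag}) by a dual-graph path argument in a centrally symmetric triangulation of $\S^d$.

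The coindex hypothesis provides a $\Z_2$-map $f : \S^d \to \|B_0(G)\|$. Fix a centrally symmetric triangulation $\T$ of $\S^d$, sufficiently fine to admit a simplicial $\Z_2$-approximation $\phi : \T \to B_0(G)$ of $f$; for concreteness I would take $\T$ to be an iterated barycentric subdivision of the boundary of the $t$-dimensional cross-polytope, whose top simplices are maximal chains $\xx_1 \prec \cdots \prec \xx_t$ in $(\{+,-,0\}^t \setminus \{\zero\}, \preceq)$. Label each $v \in V(\T)$ by $\lambda(v) = (-1)^i c(u)$ where $\phi(v) = (u, i) \in V(G) \times [2]$ and $c$ is the given proper coloring; then $\lambda$ is antipodal (by equivariance of $\phi$) and has no complementary edges (by properness of $c$ combined with the defining bipartite structure of $B_0(G)$). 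Fan's lemma (Theorem~\ref{thm:kyfan}, or equivalently Lemma~\ref{lem:kyfan_oct}) then yields an alternating top-dimensional simplex $\sigma_0$ whose image under $\phi$ is the heterochromatic balanced $K_{\lceil t/2\rceil,\lfloor t/2 \rfloor}$ of the zig-zag theorem. I take this as $H_0$ and the image of $-\sigma_0$ as $\nu H_0$.

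Next, consider the dual graph $\Gamma$ whose vertices are the top-dimensional simplices of $\T$ and whose edges join pairs sharing a codimension-one face. Since $\|\T\| = \S^d$ is connected, $\Gamma$ is connected, and a path $\sigma_0 = \tau_0, \tau_1, \ldots, \tau_k = -\sigma_0$ exists. Each edge of $\Gamma$ replaces exactly one vertex of the current simplex, so under $\phi$ it replaces a single vertex of the current bipartite subgraph, matching exactly the theorem's adjacency notion (the case of the same underlying vertex of $G$ reappearing on the opposite side arises precisely when the incoming $\T$-vertex is sent by $\phi$ to the $B_0(G)$-antipode of the outgoing one). Moreover, in the octahedral triangulation an elementary move alters only one chain entry $\xx_i$ and hence only a single label, so the side-size distribution is perturbed by at most two per step and at most one new label repetition can appear per step.

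The main obstacle is to guarantee that every $H_{\tau_i}$ along the chosen path is on exactly $t$ vertices (i.e.\ $\phi$ injective on $\tau_i$), almost heterochromatic, and balanced. An arbitrary dual-graph path ensures none of these automatically: $\phi$ could collapse a simplex, several label repetitions could accumulate, or balance could drift past the tolerance of two. To handle this I plan to run a parity argument inspired by Lemma~\ref{lem:chen} and the Sperner-type proof of Fan's lemma. Call a top simplex $\tau$ of $\T$ \emph{good} if the associated bipartite subgraph satisfies all three requirements, and call a codimension-one face a \emph{door} when it lies between two good simplices. A double-count of doors incident to good simplices pairs the good simplices up into walks whose endpoints are exactly the alternating simplices; together with $\Z_2$-equivariance of the labeling, this forces $\sigma_0$ and $-\sigma_0$ to lie in a common component of the subgraph of $\Gamma$ on good simplices, giving the desired sequence. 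Making this watertight requires choosing $\phi$ and $k$ so that $\phi$ is simplexwise-injective on $\T$ (making (i) automatic), and a careful boundary-case analysis where an elementary move threatens to create a second label repetition or push the balance past two; this parity-and-boundary-case analysis is the technical crux of the proof.
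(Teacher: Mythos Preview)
Your overall strategy---simplicial $\Z_2$-approximation $\phi:\T\to B_0(G)$, Fan-type labeling $\lambda$, and a path in a dual graph from an alternating simplex $\sigma_0$ to $-\sigma_0$---matches the paper's. But the crucial step, the one you flag as the ``technical crux,'' is where your sketch breaks down, and the paper's Lemma~\ref{lem:gen_kyfan} is precisely what fills this gap.

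First, your subgraph on ``good'' simplices (those whose image is a balanced, almost heterochromatic $K_{\ell,m}$ on $t$ vertices) has no reason to have the degree-two structure you need for the door-count to pair things into walks. The paper instead restricts to \emph{alternating and almost-alternating} $d$-simplices (defined purely in terms of $\lambda$), and these have \emph{exactly two} alternating facets each, so the resulting graph $H$ is automatically a disjoint union of circuits. The almost-heterochromatic and balanced properties then come for free from the label pattern of an almost-alternating simplex; nothing has to be controlled step by step.

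Second, and more seriously, ``$\Z_2$-equivariance of the labeling forces $\sigma_0$ and $-\sigma_0$ to lie in a common component'' is simply false as stated: equivariance only tells you that the antipode of a circuit is again a circuit, and these could perfectly well be two disjoint antipodal circuits. The paper rules this out by requiring $\T$ to \emph{refine the hemispheres} and then counting, for each circuit $C$, the parity of crossings of the equator by alternating $(d-1)$-simplices; Fan's lemma applied on the equator $\S^{d-1}$ gives an odd total, which forces at least one circuit to equal its own antipodal image. Your iterated barycentric subdivision of the cross-polytope does not refine hemispheres, and your argument has no analogue of this equator step.

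Finally, the fix ``choose $\phi$ simplexwise-injective'' is not available: simplicial approximations of $\Z_2$-maps need not be nondegenerate, no matter how finely you subdivide. The paper handles this instead by simply discarding from the circuit those almost-alternating simplices whose $\mu$-image drops dimension.
\medskip
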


Actually, the proof also shows that the bipartite subgraph on which the sequence starts and ends has the same property as stated in Theorem~\ref{thm:zigzag}, namely that the colors appear alternating on the two sides with respect to their order (assuming the colors being linearly ordered). Note that without the statement about the interchange of the two sides, Theorem~\ref{thm:path_colorful} would be a consequence of Theorem~\ref{thm:zigzag}, since the sequence with only one subgraph would satisfy the conclusion of the theorem.

In other words, under the condition of the theorem, there is a sequence $(A_1,B_1),\ldots,(A_s,B_s)$ of pairs of disjoint subsets of $V(G)$, such that $A_1=B_s$ and $A_s=B_1$ with $\big||A_1|-|B_1|\big|\leq 1$, and such that for every $i$
\begin{itemize}
\item $G[A_i,B_i]$ is complete
\item $|c(A_1\cup B_1)|=t$
\item $|A_i\cup B_i|=t$
\item $|A_i\setminus A_{i+1}|+|B_i\setminus B_{i+1}|=|A_{i+1}\setminus A_i|+|B_{i+1}\setminus B_i|=1$ (adjacency)
\item $\big||A_i|-|B_i|\big|\leq 2$ (balancedness)
\item $c(A_i\cup B_i)\geq t-1$ (almost heterochromaticity),
\end{itemize} where $c$ is the proper coloring of $G$ and $t=\coind(B_0(G))+1$.

The proof of Theorem~\ref{thm:path_colorful} relies on the following lemma. As already defined in Section~\ref{subsec:kyfan}, a $d$-dimensional simplex $\sigma$ with a labeling $\lambda$ is negative-alternating if $\lambda(V(\sigma))$ is of the form $$\{-j_1,+j_2,\ldots,(-1)^{d+1}j_{d+1}\}$$ for some $1\leq j_1<\cdots<j_{d+1}$. It is {\em positive-alternating} if it is of the same form, with the $-$'s and $+$'s interchanged. It is {\em alternating} if it is negative-alternating or positive-alternating. It is {\em almost-alternating} if it has a facet (a $(d-1)$-dimensional face) that is alternating, while not being itself alternating. 

Assuming that $\S^d$ is embedded in $\R^{d+1}$, the two {\em hemispheres} of $\S^d$ are the sets of points of $\S^d$ whose last coordinate is respectively nonnegative and nonpositive. The {\em equator} is the intersection of these two hemispheres. (In other words, the equator is the set of points whose last coordinate is equal to $0$). A triangulation of $\S^d$ {\em refines the hemispheres} if each simplex is contained in at least one of the hemispheres of $\S^d$.

We follow the terminology of Schrijver's book~\cite{Sch03}: a {\em circuit} is a connected graph whose vertices are all of degree $2$. 

\begin{lemma}\label{lem:gen_kyfan}
Consider a centrally symmetric triangulation $\T$ of $\S^d$ that refines the hemispheres, with an antipodal labeling $\lambda:V(\T)\rightarrow\{\pm 1,\ldots,\pm m\}$ such that no adjacent vertices get opposite labels. Let $H$ be the graph whose vertices are the barycenters of the almost-alternating and alternating $d$-simplices and whose edges connect vertices if the corresponding simplices share a common alternating facet. Then $H$ contains a centrally symmetric circuit with at least two alternating $d$-simplices.
\end{lemma}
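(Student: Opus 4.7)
The plan is to first show that $H$ is 2-regular (hence a disjoint union of circuits permuted by the antipodal action), then apply Fan's lemma on both $\S^d$ and the equator $\S^{d-1}$, and finally combine the resulting parity constraints to identify an alternating simplex whose circuit in $H$ also contains its antipode.

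First, I would verify $H$ is 2-regular. An alternating $d$-simplex has exactly two alternating facets, obtained by dropping the vertex with smallest or largest absolute label. A case analysis on the unique position where the sign pattern fails to alternate shows that an almost-alternating $d$-simplex also has exactly two alternating facets, namely those obtained by dropping each of the two adjacent equal-signed vertices at the break. Since each $(d-1)$-simplex of $\T$ is a face of exactly two $d$-simplices, every vertex of $H$ has degree $2$, so $H$ decomposes as a disjoint union of circuits, and the antipodal involution partitions them into centrally symmetric circuits and pairs $\{C,-C\}$.

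Next, I would apply Fan's lemma (Theorem~\ref{thm:kyfan}) twice. On the whole sphere the number of negative-alternating $d$-simplices is odd, so by antipodal symmetry the number $\alpha_+$ of alternating $d$-simplices in the upper hemisphere is also odd. On the equator --- which inherits from $\T$ a centrally symmetric triangulation of $\S^{d-1}$ with antipodal labeling and no complementary edges by virtue of the hemisphere refinement --- the number of negative-alternating $(d-1)$-facets is odd, so the set $V_{\mathrm{eq}}$ of equatorial alternating $(d-1)$-facets satisfies $|V_{\mathrm{eq}}|\equiv 2\pmod 4$. Now let $H_+$ be the subgraph of $H$ induced by upper-hemisphere $d$-simplices. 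A vertex has degree $1$ in $H_+$ iff it has an equatorial alternating facet and degree $2$ otherwise, so $H_+$ is a disjoint union of paths and cycles whose degree-$1$ vertices biject with $V_{\mathrm{eq}}$; in particular there are $|V_{\mathrm{eq}}|/2$ paths in $H_+$, an odd number, and each path pairs the two elements of $V_{\mathrm{eq}}$ at its endpoints. The antipodal action conjugates this pairing (induced by $H_+$) to the one induced by $H_-$, and centrally symmetric circuits of $H$ must cross the equator (since antipodal pairs of vertices lie in opposite hemispheres), so they are exactly those obtained by gluing an $H_+$-path with its antipodal $H_-$-path, or more generally by cycling through antipodally related upper and lower paths.

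The main obstacle is then to assemble these parities into a centrally symmetric circuit containing two alternating $d$-simplices. I would proceed by contradiction, assuming no alternating $\sigma$ has its antipode in the same circuit of $H$: the alternating $d$-simplices would then be partitioned among pairs $\{C,-C\}$ with matching counts, so the contribution to $\alpha_+$ from each pair of circuits would be even, whereas $\alpha_+$ itself is odd. Translating this residual odd contribution into an alternating simplex within a centrally symmetric circuit is the delicate part: it requires tracking, along each $H_+$-path, the parity of alternating simplices it contains, matching this against the pairing structure on $V_{\mathrm{eq}}$ induced by both $p_+$ and $p_-=-p_+(-\,\cdot\,)$, and using the odd count of paths in $H_+$. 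I anticipate that either a further Fan-type parity argument restricted to the upper hemisphere (viewed as a ball with antipodal boundary labeling) or a modification of the labeling on a suitable vertex in the spirit of the proof of Lemma~\ref{lem:chen} will close the argument; once some centrally symmetric circuit $C^*$ contains one alternating simplex, its antipode automatically provides the second.
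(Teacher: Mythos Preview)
Your setup is correct and matches the paper: $H$ is $2$-regular, hence a disjoint union of circuits permuted by the antipodal map, and Fan's lemma does apply on the equator. But the proof is genuinely incomplete, as you yourself concede in the last paragraph.

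The concrete gap is in your parity step. From the hypothesis that no alternating $d$-simplex lies in a centrally symmetric circuit, you assert that ``the contribution to $\alpha_+$ from each pair of circuits would be even.'' But ``matching counts'' only says that $C$ and $-C$ contain the same number of alternating $d$-simplices; the contribution of the pair $\{C,-C\}$ to $\alpha_+$ is $a_+(C)+a_+(-C)=a_+(C)+a_-(C)$, i.e.\ the \emph{total} number of alternating $d$-simplices in $C$, and you give no reason this is even. (It is in fact even: track the sign of the smallest-absolute-value label on the alternating facets as you traverse the circuit; this sign flips precisely when crossing an alternating $d$-simplex and is unchanged across an almost-alternating one. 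You did not supply this argument, and instead retreated to speculative $H_+$-path machinery and a hoped-for further Fan-type step.)

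The paper's route avoids this detour by counting on the equator rather than in a hemisphere. Assume for contradiction that no circuit is centrally symmetric, so circuits pair up as $\{C,\nu(C)\}$. Each circuit meets the equator an even number of times---this is exactly what your $H_+$ observation encodes, since the restriction of $C$ to the closed upper hemisphere is a union of paths and hence has evenly many endpoints---so $o^+(C)+o^-(C)$ is even, where $o^{\pm}(C)$ counts the positive-/negative-alternating equatorial $(d-1)$-facets crossed by $C$. Since the antipode swaps signs, $o^-(\nu(C))=o^+(C)$, and the pair contributes $o^-(C)+o^-(\nu(C))=o^-(C)+o^+(C)$, which is even. Summing over all pairs, the total number of negative-alternating $(d-1)$-simplices on the equator is even, contradicting Fan's lemma on $\S^{d-1}$. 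Thus some circuit is centrally symmetric, and the sign-of-smallest-label argument sketched above then forces it to contain an alternating $d$-simplex (hence two, by symmetry). Your Fan-on-$\S^d$ computation of $\alpha_+$ is never needed.
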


\begin{proof}%[of Lemma~\ref{lem:gen_kyfan}]
An alternating $(d-1)$-simplex is the facet of two $d$-simplices that are alternating or almost-alternating. An alternating or almost-alternating $d$-simplex has exactly two facets that are alternating.
Thus $H$ is a collection of vertex disjoint circuits. Denote by $\nu$ the central symmetry. Suppose for a contradiction that for every circuit $C$, the circuits $C$ and $\nu(C)$ are distinct. Let $o^-(C)$ (resp. $o^+(C)$) be the number of negative-alternating $(d-1)$-simplices (resp. positive-alternating) crossed by $C$ on the equator. Each circuit intersects the equator an even number of times (this number being possibly $0$), hence $o^-(C)+o^+(C)$ is even. Since $o^-(\nu(C))=o^+(C)$, we have an even number of negative-alternating $(d-1)$-simplices of the equator contained in $C$ and $\nu(C)$ together. Since two such circuits are disjoint and since an alternating $(d-1)$-simplex is contained in exactly one circuit, we have an even number of negative-alternating $(d-1)$-simplices on the equator. It contradicts Theorem~\ref{thm:kyfan} applied on the equator, which is homeomorphic to $\S^{d-1}$.

There is thus a circuit which is equal to its image by $\nu$. This circuit contains at least two centrally symmetric alternating $d$-simplices, since the sign of the smallest label of the almost-alternating $d$-simplices must change along it.
\end{proof}

\begin{proof}[Proof of Theorem~\ref{thm:path_colorful}]
Let $c$ be a proper coloring of $G$. Let $t=\coind(B_0(G))+1$. There is a $\Z_2$-map from 
$\S^{t-1}$ to $B_0(G)$. Thus there exists a simplicial $\Z_2$-map 
$\mu:\T\rightarrow B_0(G)$ for some centrally symmetric triangulation $\T$ of 
$\S^{t-1}$ that refines the hemispheres (according to the equivariant simplicial approximation 
theorem, mentioned for instance in Theorem 2 of the paper~\cite{DeLZi06} by De 
Longueville and \v{Z}ivaljevi\'c). Now, for a vertex $v$ of $\T$, define $\lambda(v)$ to be 
$+c(\mu(v))$ if $\mu(v)$ is in the first copy of $V(G)$ and to be $-c(\mu(v))$ if $\mu(v)$ is in 
the second copy of $V(G)$. The map $\lambda$ satisfies the condition of 
Lemma~\ref{lem:gen_kyfan}. There is thus a centrally symmetric circuit $C$ containing 
two centrally symmetric alternating $(t-1)$-simplices $\tau$ and $\nu(\tau)$. 
Now, start at $\tau$, and follow the circuit in an arbitrary direction, until reaching 
$\nu(\tau)$. Consider the images by $\mu$ of the almost-alternating $(t-1)$-simplices met 
and keep only those that are $(t-1)$-dimensional. This is the sought sequence of 
bipartite graphs.
\end{proof}

\section{Colorful subgraphs and cross-index}\label{sec:colorful}

The following theorem is a positive answer to the question by Simonyi, Tardif, and Zsb\'an mentioned in the introduction.

\begin{theorem}\label{thm:xind}
Let $G$ be a graph for which $$\chi(G) = \Xind(\Hom(K_2,G))+2 = t.$$  Assume that $G$ is properly colored with $[t]$ as the color set and let $I,J\subseteq[t]$ form a bipartition of the color set, i.e., $I\cup J =[t]$ and $I\cap J =\varnothing$.
Then there exists a colorful copy of $K_{|I|,|J|}$ in $G$ with the colors in $I$ on one side, and the colors in $J$ on the other side.
\end{theorem}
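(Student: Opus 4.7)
My plan is to argue by contradiction. Suppose there is no colorful copy of $K_{|I|,|J|}$ in $G$ with $I$-colors on one side and $J$-colors on the other; I will derive $\Xind(\Hom(K_2,G))\le t-3$, contradicting the hypothesis $\Xind(\Hom(K_2,G))=t-2$.

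The proper coloring $c:V(G)\to[t]$ is a graph homomorphism $c:G\to K_t$ and thus induces an order-preserving $\Z_2$-map of posets
\[c_*:\Hom(K_2,G)\longrightarrow \Hom(K_2,K_t),\qquad c_*(A,B)=(c(A),c(B)).\]
The pair $(I,J)\in\Hom(K_2,K_t)$ is a maximal element (its underlying set is all of $[t]$), and the existence of the sought colorful $K_{|I|,|J|}$ is equivalent to $(I,J)$ or its antipode $(J,I)$ belonging to the image of $c_*$: given such a pre-image $(A,B)$ one simply picks one representative vertex per color on each side. Under the contrapositive assumption, $c_*$ therefore factors through
\[P':=\Hom(K_2,K_t)\setminus\{(I,J),(J,I)\}.\]

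The crux is the inequality $\Xind(P')\le t-3$: composing an order-preserving $\Z_2$-witness with $c_*$ yields $\Xind(\Hom(K_2,G))\le t-3$ and the desired contradiction. To prove it I would construct an explicit order-preserving $\Z_2$-map $\mu:P'\to Q_{t-3}$. The convenient setting is the sign-vector model: identify $(A,B)\in\Hom(K_2,K_t)$ with $\xx\in\{+,-,0\}^t$ via $\xx_i=+$ for $i\in A$, $\xx_i=-$ for $i\in B$, $\xx_i=0$ otherwise, and then perform the $\Z_2$-poset automorphism $\xx\mapsto\yy:=\xx\cdot\xx^*$ (coordinate-wise product), where $\xx^*$ is the sign vector of $(I,J)$. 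This twist sends $(I,J)\mapsto\ee^+$ and $(J,I)\mapsto\ee^-$, so $P'$ is identified with a subposet of $\{+,-,0\}^t\setminus\{\zero\}$ that avoids $\pm\ee^+$. On this subposet I would define $\mu$ by a Ky Fan-style rule in the spirit of Lemma~\ref{lem:kyfan_oct}: the label of $\yy$ is determined by the smallest index $k$ that ``witnesses'' $\yy\ne\pm\ee^+$, with the sign chosen to enforce the antipodal condition. Excluding $\ee^+,\ee^-$ is precisely what compresses the natural label range $\{\pm 1,\ldots,\pm(t-1)\}$---the range witnessing $\Xind(\Hom(K_2,K_t))\le t-2$---down to $\{\pm 1,\ldots,\pm(t-2)\}$, i.e., into $Q_{t-3}$.

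The main obstacle I expect is the verification that $\mu$ is simultaneously $\Z_2$-antipodal and order-preserving into the reduced range, through a case analysis separating those $\yy\in P'$ having a zero coordinate from the fully signed ones. Conceptually this is the cross-index analogue of the homotopical fact that $\S^{t-2}$ minus an antipodal pair of open top cells deformation-retracts onto $\S^{t-3}$; since cross-index is a purely poset-theoretic invariant, the reduction must be realized by an explicit combinatorial labeling rather than a continuous deformation, and pinning this down cleanly---likely as a short auxiliary lemma of independent interest about removing an antipodal pair of maximal elements from a free $\Z_2$-poset---is the technical heart of the argument.
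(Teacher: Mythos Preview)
Your reduction is correct: assuming $I,J$ both nonempty (the degenerate case is trivial, and note $(I,J)\notin\Hom(K_2,K_t)$ when one side is empty), the colorful $K_{|I|,|J|}$ exists iff $(I,J)$ or $(J,I)$ lies in the image of $c_*$, and if not, $c_*$ factors through $P'=\Hom(K_2,K_t)\setminus\{(I,J),(J,I)\}$. So it would suffice to prove $\Xind(P')\le t-3$, and this inequality is indeed true.

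However, you do not prove it. Your sketched construction---label $\yy$ by ``the smallest index witnessing $\yy\ne\pm\ee^+$''---is too imprecise to check, and minimum-index schemes are rarely order-preserving for $\preceq$; in particular you give no rule for the many full-support vectors in the twisted $P'$ other than $\pm\ee^+$. The clean way to fill the gap is exactly the device the paper isolates as Lemma~\ref{lem:alter}. Restrict $\phi'(A,B)=\pm(\max(A\cup B)-1)$ to $P'$; the chain argument in the paper's proof of Theorem~\ref{thm:xind} (run on $K_t$ with the identity coloring) shows that any chain realizing the sign pattern $(\varepsilon_i)$ determined by $I,J$ must terminate at $(I,J)$ itself, and by antipodality any chain realizing $(-\varepsilon_i)$ terminates at $(J,I)$. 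Hence no such chain lies in $P'$, and by the contrapositive of the second half of Lemma~\ref{lem:alter}, $\Xind(P')\ne t-2$, so $\Xind(P')\le t-3$. The explicit $\mu$ you are looking for is then $\mu(p)=\operatorname{sign}(\phi''(p))\cdot\ell(p)$, where $\phi''$ is the $(\varepsilon_i)$-twist of $\phi'$ and $\ell(p)$ is the length of the longest $\phi''$-alternating chain ending at $p$---precisely the construction from the proof of the first half of Lemma~\ref{lem:alter}. Once this is in place, your argument and the paper's are the same proof viewed from opposite ends: the paper applies Lemma~\ref{lem:alter} directly to $\Hom(K_2,G)$ via $\phi=\phi'\circ c_*$ and reads the bipartite graph off the top of the resulting chain, avoiding the detour through $P'$.
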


The original colorful $K_{\ell,m}$ theorem is the same theorem with $\coind(B_0(G))+1$ in place of $\Xind(\Hom(K_2,G))+2$. A first improvement with $\ind(\Hom(K_2,G))+2$ was obtained by Simonyi, Tardif, and Zsb\'an~\cite{SiTaZs13}. Because of the inequalities~\eqref{lbchrom}, Theorem~\ref{thm:xind} implies the two first versions of the colorful $K_{\ell,m}$ theorem. It is actually not clear that we have obtained a true generalization of the version due to Simonyi, Tardif, and Zsb\'an, since it is not known whether there exists a graph $G$ with $\Xind(\Hom(K_2,G))$ strictly larger than $\ind(\Hom(K_2,G))$.

Let $P$ be a free $\Z_2$-poset and let $\phi:P\rightarrow Q_s$ be any map for some positive integer $s$. We define an {\em alternating chain} as a chain $p_1<_P\cdots<_Pp_k$ with alternating signs, i.e., the signs of $\phi(p_i)$ and $\phi(p_{i+1})$ differ for every $i\in[k-1]$.

\begin{lemma}\label{lem:alter}
Suppose that $\phi:P\rightarrow Q_s$ is an order-preserving $\Z_2$-map. Then there exists in $P$ at least one alternating chain of length $\Xind(P)+1$. Moreover, if $s=\Xind(P)$, then for any $(s+1)$-tuple $(\varepsilon_1,\ldots,\varepsilon_{s+1})\in \{+,-\}^{s+1}$, there exits at least one chain $p_1<_P\cdots<_Pp_{s+1}$ such that $\phi(p_i)=\varepsilon_i i$ for each $i\in[s+1]$.
\end{lemma}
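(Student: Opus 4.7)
The plan for the first assertion is to construct, out of the alternating-chain structure of $P$ with respect to $\phi$, an order-preserving $\Z_2$-map $\psi : P \to Q_{k-1}$, where $k$ is the length of a longest alternating chain in $P$. This will force $\Xind(P)\leq k-1$ and hence produce an alternating chain of length $\Xind(P)+1$ as a prefix of a longest one. I would set $f(p)$ to be the length of a longest alternating chain ending at $p$ (so $f(p)\in[1,k]$) and define $\psi(p)=\mathrm{sign}(\phi(p))\cdot f(p)$. Equivariance will be immediate from $f(-p)=f(p)$, since the $\Z_2$-action on $P$ sends an alternating chain to an alternating chain of the same length.

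The main verification will be that $\psi$ is order-preserving. Given $p<q$ in $P$, I would split on the signs of $\phi(p)$ and $\phi(q)$. If they differ, appending $q$ to a longest alternating chain ending at $p$ yields one of length $f(p)+1$ ending at $q$, so $|\psi(p)|<|\psi(q)|$ and $\psi(p)<_{Q_{k-1}}\psi(q)$. If they agree, I would replace the last element of such a chain by $q$: the chain $p_1<\cdots<p_{f(p)-1}<q$ is still alternating, because $\phi(p_{f(p)-1})$ has sign opposite to that of $\phi(p)=\phi(q)$; hence $f(q)\geq f(p)$. Now the signs of $\psi(p)$ and $\psi(q)$ agree and $|\psi(p)|\leq|\psi(q)|$, so $\psi(p)\leq_{Q_{k-1}}\psi(q)$ still holds. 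This completes the first assertion.

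For the moreover part, the idea is to reduce an arbitrary sign pattern $\varepsilon\in\{+,-\}^{s+1}$ to the alternating case already handled, by twisting $\phi$ through a sign function of $|\phi|$. Setting $c_i=\varepsilon_i(-1)^i\in\{\pm1\}$, I would define $\phi'(p)=c_{|\phi(p)|}\,\phi(p)$; since the twist depends only on $|\phi(p)|$ and preserves absolute values, $\phi'$ is still an order-preserving $\Z_2$-map $P\to Q_s$. Applying the first assertion to $\phi'$ produces an alternating chain $p_1<\cdots<p_{s+1}$ in $P$ with respect to $\phi'$. Its images have strictly increasing absolute values in $[s+1]$, forcing $|\phi'(p_i)|=i$, and the alternation forces $\mathrm{sign}(\phi'(p_i))=\delta(-1)^i$ for some common $\delta\in\{\pm1\}$. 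Unwinding then yields $\phi(p_i)=\delta\varepsilon_i\cdot i$; the case $\delta=+1$ gives the desired chain directly, and the case $\delta=-1$ gives it after passing to the antipodal chain $-p_1<\cdots<-p_{s+1}$ via the $\Z_2$-equivariance of $\phi$.

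The delicate point will be the same-sign case of the order-preservation check above: strict monotonicity of $f$ is unavailable there, and the whole argument hinges on the fact that $Q_{k-1}$'s partial order is only the reflexive closure of `$|x|<|y|$', so that weak monotonicity of $f$ is already enough to conclude $\psi(p)\leq_{Q_{k-1}}\psi(q)$; the `replace the last element' trick, which depends on the strict inequalities $p_{f(p)-1}<p<q$ and transitivity, is exactly what supplies that weak monotonicity.
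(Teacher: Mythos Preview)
Your proof is correct and follows essentially the same approach as the paper: both define the auxiliary map by pairing the sign of $\phi(p)$ with the length of a longest alternating chain ending at $p$, and both handle the second assertion by the sign twist $\phi'(p)=(-1)^{|\phi(p)|}\varepsilon_{|\phi(p)|}\phi(p)$. You supply the order-preservation verification (the append/replace-last-element argument) and the $\delta=\pm 1$ case analysis that the paper leaves implicit, but the underlying ideas are identical.
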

\begin{proof}
First, we prove that there exists at least one alternating chain of length $\Xind(P)+1$. For a contradiction, we suppose that the length of a longest alternating chain is at most $\Xind(P)$. For each $p\in P$, let $\ell(p)$ be the length of a longest alternating chain ending at $p$.
The function $\ell$ takes its value between $1$ and $\Xind(P)$.
Define the map $\bar{\phi}:P\rightarrow Q_{\Xind(P)-1}$ by
$\bar\phi(p)=\pm \ell(p)$ with the sign of $\phi(p)$. The map $\bar\phi$ is an order-preserving $\Z_2$-map from  $P$ to  $Q_{\Xind(P)-1}$, which is in contradiction with the definition of $\Xind(P)$.
This completes the proof of the first part.

Assume now that $s=\Xind(P)$. To complete the proof we shall prove that for any $(s+1)$-tuple $(\varepsilon_1,\ldots,\varepsilon_{s+1})\in \{+,-\}^{s+1}$, there exists at least one chain $p_1<_P\cdots<_Pp_{s+1}$ such that $\phi(p_i)=\varepsilon_i i$ for each $i\in[s+1]$.
To this end, define $\phi':P\rightarrow Q_s$ such that for each $p\in P$,
$\phi'(p)=(-1)^i\varepsilon_i\phi(p)$ where $i=|\phi(p)|$.
The map $\phi'$ is an order-preserving $\Z_2$-map.
In view of the prior discussion, there is an alternating chain of length $s+1$ in $P$ with respect to the map $\phi'$ and which starts with a $-$ sign. If we consider this chain with respect to the map $\phi$, one can see that this chain is the desired one.
\end{proof}

\begin{proof}[Proof of Theorem~\ref{thm:xind}] Let $c:V(G)\rightarrow[t]$ be a proper coloring of $G$. When $I$ or $J$ is empty, we have the assertion. Therefore,
without loss of generality, we may assume that $1\in I$  and $2\in J$ (we can rename colors if necessary). Define $\phi:\Hom(K_2, G)\rightarrow Q_{t-2}$ so that for
each $(A,B)\in\Hom(K_2, G)$, we have $\phi(A,B)=\pm\big(\max \left(c(A)\cup c(B)\right)-1\big)$ and the sign is positive if $\max \left(c(A)\cup c(B)\right)\in c(A)$ and is negative otherwise. The map $\phi$ is an order-preserving $\Z_2$-map. For each $i\in[t-1]$, set
$$\varepsilon_i=\left\{\begin{array}{ll}
+ & \mbox{if $i+1\in I$}\\
- & \mbox{if $i+1\in J$.}
\end{array}\right.$$
According to Lemma~\ref{lem:alter}, there is a chain of pairs of nonempty disjoint vertex subsets $(A_1,B_1)\subset \cdots\subset (A_{t-1},B_{t-1})$
such that $\phi(A_i,B_i)=\varepsilon_i i$. It implies that for $j\in I\setminus\{1\}$, we have $j\in c(A_{j-1})$, and thus $j\in c(A_{t-1})$. It implies similarly that for $j\in J$, we have $j\in c(B_{j-1})\subseteq c(B_{t-1})$. 
In particular, we have $2\in c(B_1)$. Since $\varepsilon_1=-$, we have $\max c(A_1)<2$, and thus $1\in c(A_1)$. Therefore $I\subseteq c(A_{t-1})$ and $J\subseteq c(B_{t-1})$. As $I,J$ forms a partition of $[t]$, we get $c(A_{t-1})=I$ and $c(B_{t-1})=J$, which completes the proof.
\end{proof}

%To the best of our knowledge, all combinatorial proofs of the combinatorial Fan lemma, Theorem~\ref{thm:kyfan}, rely on induction and are valid only for centrally symmetric triangulation. For instance, Matou\v{s}ek~\cite{Mat03} asks whether there is a combinatorial proof for any centrally symmetric triangulation for Tucker's lemma (a special case of the combinatorial Fan lemma). We are even not aware of a statement of the combinatorial Fan lemma in the full generality of Theorem~\ref{thm:kyfan} (even if it can readily deduced from the covering Fan lemma~\cite{Fan52}).

\subsection*{Remark}
Lemma~\ref{lem:alter} leads to a generalization of Fan's lemma (Theorem~\ref{thm:kyfan}) which holds for any free simplicial $\Z_2$-complex. The generalization is the following (without the oddness assertion).

\begin{proposition}
Let $\K$ be a free simplicial $\Z_2$-complex and let $\lambda$ be an antipodal labeling with no complementary edges. Then there exists at least one alternating simplex of dimension $\ind(\K)$.
\end{proposition}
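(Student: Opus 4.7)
The plan is to reduce the statement to Lemma~\ref{lem:alter} applied to the face poset of $\K$. Write $d = \ind(\K)$ and let $P$ be the poset of nonempty faces of $\K$ ordered by inclusion. The $\Z_2$-action on $\K$ restricts to a fixed-point-free order-preserving involution on $P$, so $P$ is a free $\Z_2$-poset; since $\Delta P$ is the barycentric subdivision of $\K$, one has $\ind(\Delta P) = \ind(\K) = d$, and therefore $\Xind(P) \geq d$ by the inequality recalled in Section~\ref{sec:basic}.

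The next step is to convert the labeling $\lambda$ into an order-preserving $\Z_2$-map $\phi: P \to Q_{m-1}$. For each $\sigma \in P$, let $v^*$ be a vertex of $\sigma$ maximizing $|\lambda|$ and set $\phi(\sigma) = \lambda(v^*)$. Well-definedness is precisely where the \emph{no complementary edges} hypothesis enters: any two vertices of $\sigma$ that attain the maximum absolute label are adjacent in $\K$, hence cannot carry opposite signs, so the value $\phi(\sigma)$ does not depend on the chosen $v^*$. Order-preservation is immediate since enlarging $\sigma$ can only enlarge this maximum, and antipodality $\phi(\nu(\sigma)) = -\phi(\sigma)$ follows at once from $\lambda(\nu(v)) = -\lambda(v)$.

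Applying Lemma~\ref{lem:alter} to $\phi$ yields an alternating chain in $P$ of length $\Xind(P) + 1 \geq d + 1$. Truncate it to $\sigma_1 \subsetneq \cdots \subsetneq \sigma_{d+1}$ and, for each $i$, pick $v_i \in \sigma_i$ with $\lambda(v_i) = \phi(\sigma_i)$. Since $\phi$ is order-preserving in $Q_{m-1}$ (where two distinct values of opposite sign automatically have different absolute values) and the signs along the chain alternate, we have $|\phi(\sigma_1)| < |\phi(\sigma_2)| < \cdots < |\phi(\sigma_{d+1})|$. Therefore the $v_i$ carry pairwise distinct labels and are in particular distinct vertices. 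The set $\{v_1, \ldots, v_{d+1}\}$ is contained in $\sigma_{d+1}$, hence is a $d$-face of $\K$, and by construction it is alternating.

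The only delicate step is the construction of $\phi$: without the \emph{no complementary edges} assumption there would be no canonical sign to attach to the maximum absolute value on $\sigma$ and $\phi$ would fail to be a single-valued map. Once $\phi$ is in hand, the conclusion becomes a direct application of Lemma~\ref{lem:alter} combined with the basic inequality $\Xind(P) \geq \ind(\Delta P)$, with the final extraction of an alternating $d$-simplex from an alternating chain being a bookkeeping matter.
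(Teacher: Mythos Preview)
Your proof is correct and follows essentially the same route as the paper: pass to the face poset $P=\sd\K$, define $\phi(\sigma)$ as the label of maximal absolute value on $\sigma$ (well-defined thanks to the no-complementary-edges hypothesis), and invoke Lemma~\ref{lem:alter} together with $\Xind(P)\geq\ind(\Delta P)=\ind(\K)$ to extract an alternating chain and hence an alternating $d$-simplex. Your write-up is in fact a bit more explicit than the paper's sketch about the final extraction step.
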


We explain how Lemma~\ref{lem:alter} implies this generalization. It is worth noting that this proof does not require any induction. Apply it for $P=\sd\K$ and $\phi(\sigma)=\max_{v\in V(\sigma)}\lambda(v)$, where the maximum is taken according to the partial order $\leq_{Q_s}$. The map $\phi$ is an order-preserving $\Z_2$-map from $P$ to $Q_s$. Therefore, there is an alternating chain $\sigma_1\subset\cdots\subset\sigma_{\ell}$ in $P$ of length $\ell=\Xind(\sd\K)+1\geq\ind(\sd^2\K)+1=\ind(\K)+1$. It is then easy to check that $\sigma_{\ell}$ is an alternating simplex of dimension $\ind(\K)$.

It is not too difficult to find instances for which the number of negative-alternating simplices of dimension $\ind(\K)$ is even. We can thus not expect a full generalization with the oddness assertion.

\section{Generalization of the alternative Kneser coloring lemma}\label{sec:alternative}

\subsection{The generalization}

A hypergraph $\H$ is {\em nice} if it is nonempty (i.e., it has at least one edge), if it has no singletons, and if there is a bijection $\sigma:[n]\rightarrow V(\H)$ for which
\begin{itemize}
\item $\chi(\KG(\H)) = n-\alt_{\sigma}(\H)$
\item every sign vector $\xx\in\{+,-,0\}^n$ with $\alt(\xx)\geq \alt_{\sigma}(\H)$ and $|\xx|> \alt_{\sigma}(\H)$ is such that at least one of $\sigma(\xx^+)$ and $\sigma(\xx^-)$ contains some edge of $\H$.
\end{itemize}
We use the notation $|\xx|$ to denote the cardinality of the support $\xx$, i.e., the number of $x_i$ being nonzero. Note that the chromatic number of a general Kneser graph built upon a nice hypergraph matches the lower bound of \eqref{eq:alter}. Even if the definition of a nice hypergraph is quite technical, there are some special cases that are easy to figure out, see Lemmas~\ref{lem:cd2} and~\ref{lem:PartitionMatroid} below.

The {\em categorical product} of graphs $G_1,\ldots,G_s$, denoted by $G_1\times\cdots\times G_s$, is the graph with vertex set $V(G_1)\times\cdots\times  V(G_s)$ and such that two vertices $(u_1,\ldots,u_s)$ and $(v_1,\ldots,v_s)$ are adjacent if  $u_iv_i\in E(G_i)$ for each $i$. It is a widely studied graph product, see for instance~\cite{He79,Lo67,Lo71,Ta08,Zh98}.
%For an arbitrary $i$ and a proper $t$-coloring of $G_i$, one can present a proper $t$-coloring for $G_1\times\cdots\times G_s$. Therefore, $\chi(G_1\times\cdots\times G_s)\leq \min_j\{\chi(G_j)\}$. In~1996, Hedetniemi conjectured that equality always holds. This conjecture has been studied in the literature, see~\cite{2014arXiv1403.4404A,2014arXiv1410.3021H,MR557892,MR1804825,MR1185007,MR2718285,MR2445666}.

\begin{theorem}\label{thm:main}
Let $\H_1,\ldots,\H_s$ be nice hypergraphs and let $t=\min_j\{\chi(\KG(\H_j))\}$.
Then any proper coloring of $\KG(\H_1)\times\cdots\times\KG(\H_s)$ with $t$ colors
contains a colorful copy of $K^*_{t,t}$ with all colors appearing on each side.
\end{theorem}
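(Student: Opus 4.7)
The plan is to mimic the sign-vector labeling argument behind Chen's alternative Kneser coloring lemma (the $s=1$ case of Corollary~\ref{cor:cd2}), lifting it to a labeling that encodes all $s$ hypergraphs simultaneously. Once such a labeling is built, Lemma~\ref{lem:chen} supplies two ``twin'' chains from which the two sides of $K_{t,t}^*$ can be read off. Concretely, for each $j\in[s]$ I fix the niceness bijection $\sigma_j$ and set $a_j=n_j-t_j$; niceness then guarantees, for every $\xx\in\{+,-,0\}^{n_j}$ with $\alt(\xx)\geq a_j$ and $|\xx|>a_j$, that one of $\sigma_j(\xx^+)$ or $\sigma_j(\xx^-)$ contains an edge of $\H_j$. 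Fix a proper coloring $c$ of the product with colors in $[t]$, set $N=\sum_j n_j$, identify $\{+,-,0\}^N\setminus\{\zero\}$ with $\prod_j\{+,-,0\}^{n_j}$ minus its bottom (writing $\xx=(\xx^1,\ldots,\xx^s)$), and fix a global linear order on the $N$ coordinates so that every nonzero $\xx$ has a last nonzero coordinate with well-defined sign $s(\xx)\in\{+,-\}$.

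The core construction is a single order-preserving $\Z_2$-map $\lambda:\{+,-,0\}^N\setminus\{\zero\}\to Q_{N-1}$. Set $\gamma=N-t+1$, and for each $j$ put $W_j(\xx)=(\xx^j)^{s(\xx)}$, so that $W_j(-\xx)=W_j(\xx)$ because $s$ flips in tandem with the sides. Call $\xx$ \emph{coloured} if $\sigma_j(W_j(\xx))$ contains an edge of $\H_j$ for every $j$, and in that case put $c^*(\xx)=\max\{c(e_1,\ldots,e_s):e_j\in E(\H_j),\ e_j\subseteq\sigma_j(W_j(\xx))\}$ and $\lambda(\xx)=s(\xx)\cdot(\gamma-1+c^*(\xx))$. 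For $\xx$ not coloured, set $\lambda(\xx)=s(\xx)\cdot f(\xx)$ using a monotone antipodal auxiliary function $f$ valued in $\{1,\ldots,\gamma-1\}$, built from the $\alt(\xx^j)$'s so that the transition from uncoloured to coloured along any chain pushes $|\lambda|$ from $\leq\gamma-1$ to $\geq\gamma$. Antipodality of $\lambda$ is immediate from the $s\leftrightarrow-s$ / $W_j\leftrightarrow W_j$ symmetry; monotonicity in the coloured regime comes from using the \emph{maximum} color in $c^*$ (enlarging $W_j$ can only increase it); and the $\gamma$-hypothesis of Lemma~\ref{lem:chen} follows from the unidirectionality of the regime transition along $\preceq$.

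Applying Lemma~\ref{lem:chen} with this $\gamma$ yields chains $\xx_1\preceq\cdots\preceq\xx_N$ and $\yy_1\preceq\cdots\preceq\yy_N$ with $\lambda(\xx_i)=(-1)^i i$ for all $i$, $\lambda(\yy_i)=(-1)^i i$ for $i\neq\gamma$, and $\xx_\gamma=-\yy_\gamma$. The coloured positions $i\in\{\gamma,\ldots,N\}$ correspond to colours $1,\ldots,t$ with alternating sides $s(\xx_i)=s(\yy_i)=(-1)^i$. The argmax edge tuple realising $c^*(\xx_i)$ (respectively $c^*(\yy_i)$) produces a product-vertex $A_{c^*(\xx_i)}$ (respectively $B_{c^*(\yy_i)}$), giving $t$ vertices $A_1,\ldots,A_t$ from the $\xx$-chain and $t$ vertices $B_1,\ldots,B_t$ from the $\yy$-chain, one per colour on each side. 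The alternation of sides along each chain, combined with the antipodal pivot $\xx_\gamma=-\yy_\gamma$ and nestedness of supports, forces that in every coordinate $j$ the edges of $A_i$ and $B_\ell$ lie in opposite sides of $\sigma_j(\xx_N^j)$ (or of $\sigma_j(\yy_N^j)$) whenever $i\neq\ell$, so $A_i$ and $B_\ell$ are coordinate-wise disjoint and hence adjacent in the categorical product, yielding the desired $K_{t,t}^*$ colourful on each side.

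The main obstacle is engineering the labeling in detail: niceness only guarantees an edge on \emph{some} side of each $\sigma_j(\xx^j)$ when the alternation is large enough, not necessarily on the $s(\xx)$-side, so carefully designing the auxiliary $f$ and the ``coloured'' condition---so that (i) the labels $1,\ldots,\gamma-1$ are covered monotonically, (ii) $\lambda$ is order-preserving across the regime change, and (iii) the $\gamma$-condition of Lemma~\ref{lem:chen} holds---couples the $s$ hypergraphs in a nontrivial way. The second delicate step is the final coordinate-wise disjointness check: for distinct colours $i\neq\ell$ and each coordinate $j$, one must argue that the canonical side choice commutes with the antipodal pivot tightly enough that the extracted edges land on opposite sides of the top-of-chain support, which is where the proof must carefully combine the independent structures of the individual $\H_j$'s.
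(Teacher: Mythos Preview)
Your plan has the right overall shape---build an order-preserving $\Z_2$-map into $Q_{N-1}$ and invoke Lemma~\ref{lem:chen}---but the specific labeling you propose does not satisfy the hypotheses of that lemma. The problem is your sign function $s(\xx)$, defined as the sign of the last nonzero coordinate. This sign is \emph{not stable under $\preceq$}: if $\xx\preceq\yy$ and $\yy$ has a further nonzero coordinate past the last nonzero coordinate of $\xx$, then $s(\yy)$ can be the opposite of $s(\xx)$. When that happens, $W_j(\xx)=(\xx^j)^{s(\xx)}$ and $W_j(\yy)=(\yy^j)^{s(\yy)}$ lie on \emph{opposite} sides of $\yy^j$, so there is no containment between them. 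Consequently (i) ``coloured'' is not upward-closed along chains, (ii) $c^*$ need not be monotone, and (iii) whenever $c^*(\xx)=c^*(\yy)$ with $s(\xx)\neq s(\yy)$ you get $\lambda(\xx)+\lambda(\yy)=0$, a complementary edge. Any one of these breaks the order-preserving $\Z_2$-map condition. Your claim that the $\gamma$-condition ``follows from the unidirectionality of the regime transition'' also fails for the same reason, and even when the sign is stable it does not follow: $|\lambda|=\gamma$ means $c^*=1$, which can easily persist over a strict step $\xx\prec\yy$.

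The paper avoids all of this by defining the sign and the magnitude of $\lambda$ in a much more delicate, asymmetric way. In the ``coloured'' regime (where a common sign $\varepsilon$ exists with an $\H_j$-edge in every $\xx(j)^\varepsilon$) it takes $v(\xx)=N-t+\max(c^+(\xx),c^-(\xx))$ and sets the sign according to which of $c^+,c^-$ is larger; properness of $c$ prevents ties, and if the sign flips along a chain then $v$ strictly increases, so no complementary edge arises. In the complementary regime it uses a three-term formula mixing $\alt(\xx(j))$, $|\xx(j)|$, and a constrained alternation, tuned so that $v(\xx)\leq N-t$ exactly there; crucially, $v(\xx)=N-t$ forces a very rigid structure (a single index $j_0$ with $A_{j_0}(\xx)=\varnothing$, $|\xx(j_0)|=\alt(\xx(j_0))=\alt(\H_{j_0})$, and full support elsewhere), and this rigidity is what yields the $\gamma$-condition with $\gamma=N-t$. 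The extraction of $K_{t,t}^*$ then takes place entirely in coordinate $j_0$: the two Chen chains grow only in that block, niceness of $\H_{j_0}$ supplies edges $e_{j_0}^i\subseteq T\cup\{a_i\}$ and $f_{j_0}^i\subseteq S\cup\{b_i\}$, and an inductive colour argument forces $a_i=b_i$, which is precisely the mechanism giving disjointness for $i\neq i'$ and non-adjacency for $i=i'$. Your final ``coordinate-wise disjointness check'' is not a minor bookkeeping step; it is where the whole structure of the proof lives, and it does not follow from the setup you describe.
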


The quantity $t=\min_j\{\chi(\KG(\H_j))\}$ is actually the chromatic number of $\KG(\H_1)\times\cdots\times\KG(\H_s)$ according
to a result in~\cite{HaMe16}.
%This lower bound implies that the chromatic number of $\KG(\H_1)\times\cdots\times\KG(\H_s)$ is $\min_j(|V(\H_j)|-\alt(\H_j))$ provided that each $\H_i$ is nice.
We postpone the proof of Theorem~\ref{thm:main} to the end of the section. The following two lemmas give sufficient conditions for a hypergraph to be nice.

\begin{lemma}\label{lem:cd2}
Any nonempty hypergraph $\H$ with $\chi(\KG(\H))=\cd_2(\H)$ and no singletons is nice.
\end{lemma}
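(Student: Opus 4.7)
The plan is to show that every bijection $\sigma:[n]\to V(\H)$ witnesses the niceness of $\H$, with the second technical condition holding vacuously. Throughout, set $n:=|V(\H)|$ and $t:=\chi(\KG(\H))=\cd_2(\H)$; the two nontrivial conditions to check are (i) $\chi(\KG(\H))=n-\alt_\sigma(\H)$ and (ii) every $\xx$ with $\alt(\xx)\geq\alt_\sigma(\H)$ and $|\xx|>\alt_\sigma(\H)$ has at least one of $\sigma(\xx^+)$, $\sigma(\xx^-)$ containing an edge.

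For (i), I would sandwich $\alt_\sigma(\H)$ between two quantities that both equal $n-t$. On the one hand, inequality~\eqref{eq:alter}, which (as recalled in the paragraph after its statement) holds for every bijection $\sigma$ individually, gives $\alt_\sigma(\H)\geq n-\chi(\KG(\H))=n-t$. On the other hand, the same paragraph records $n-\alt_\sigma(\H)\geq\cd_2(\H)=t$, so $\alt_\sigma(\H)\leq n-t$. Hence $\alt_\sigma(\H)=n-t$, yielding (i) for any choice of $\sigma$.

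For (ii), the key observation is that any $\xx\in\{+,-,0\}^n$ for which neither $\sigma(\xx^+)$ nor $\sigma(\xx^-)$ contains an edge of $\H$ induces a valid $2$-coloring of the partial hypergraph obtained from $\H$ by deleting the vertex set $U:=\sigma(\{i:x_i=0\})$: any edge $e\in E(\H)$ with $e\cap U=\varnothing$ lies in $\sigma(\xx^+)\cup\sigma(\xx^-)$ and, by assumption, in neither of the two sides, so it is split. The definition of the $2$-colorability defect then forces $|U|\geq\cd_2(\H)=t$, whence $|\xx|=n-|U|\leq n-t=\alt_\sigma(\H)$. Therefore every $\xx$ with $|\xx|>\alt_\sigma(\H)$ must have one of $\sigma(\xx^+)$, $\sigma(\xx^-)$ containing an edge; condition (ii) holds vacuously, and the hypothesis $\alt(\xx)\geq\alt_\sigma(\H)$ does not even need to be invoked.

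There is no genuine obstacle; the whole proof is a dictionary translation. The only point worth checking carefully is the identification underlying the step above: the edge-freeness condition built into the definition of $\alt_\sigma(\H)$ is \emph{exactly} the statement that $\sigma(\xx^+),\sigma(\xx^-)$ is a proper $2$-coloring of the partial hypergraph on $V(\H)\setminus U$, which is precisely what makes $\cd_2(\H)$ directly applicable to bound $|\xx|$.
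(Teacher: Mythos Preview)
Your proof is correct and follows essentially the same route as the paper's: both pick an arbitrary bijection $\sigma$, use the chain $\chi(\KG(\H))\geq n-\alt_\sigma(\H)\geq\cd_2(\H)$ to pin down $\alt_\sigma(\H)=n-t$, and then argue (via the $2$-colorability interpretation) that any $\xx$ with $|\xx|>\alt_\sigma(\H)$ forces an edge in $\sigma(\xx^+)$ or $\sigma(\xx^-)$, without needing the hypothesis on $\alt(\xx)$. One small terminological quibble: condition~(ii) does not hold \emph{vacuously} (there are such $\xx$); rather, you prove it directly while observing that one of its hypotheses is superfluous.
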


\begin{proof}
Let $|V(\H)|=n$ and consider an arbitrary bijection $\sigma:[n]\rightarrow V(\H)$.
The inequalities $\chi(\KG(\H))\geq n-\alt_\sigma(\H)\geq \cd_2(\H)$ hold, see Section~\ref{subsec:alt_cd}. We have thus $\chi(\KG(\H))=n-\alt_\sigma(\H)$.
Consider now a sign vector $\xx\in\{+,-,0\}^n$ with $|\xx|>n-\alt_{\sigma}(\H)$. Since $n-\alt_\sigma(\H)=\cd_2(\H)$ and since $|\xx|=|\sigma(\xx^+)|+|\sigma(\xx^-)|$, we get $|\sigma(\xx^+)|+|\sigma(\xx^-)|>n-\cd_2(\H)$. Therefore at least one of $\sigma(\xx^+)$ and $\sigma(\xx^-)$ contains some edge of $\H$, which completes the proof.
\end{proof}

\begin{lemma}\label{lem:PartitionMatroid}
Let $U_1,\ldots,U_m$ be a partition of $[n]$ and let $k,r_1,\ldots,r_m$ be positive integers. Assume that $|U_i|\neq 2r_i$ for every $i$ and that $k\geq 2$. Let $\H$ be the hypergraph defined by
$$\begin{array}{rcl}
V(\H) & = & [n] \\
E(\H) & = & \ds{\left\{A\in {[n]\choose k}:\;|A\cap U_i|\leq r_i\;\mbox{for every $i$}\right\}}.
\end{array}\footnote{The edges of such a hypergraph are the bases of a truncation of a partition matroid.}$$
If $\H$ has at least two disjoint edges, then it is nice.
\end{lemma}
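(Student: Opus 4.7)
The plan is to verify the three defining conditions of niceness for the bijection $\sigma$ that orders the $n$ vertices so that each $U_i$ appears as a contiguous block; the order of these blocks will be tuned later. Nonemptiness of $\H$ follows from the hypothesis of two disjoint edges, and the absence of singletons from $k \geq 2$. For a sign vector $\xx$, let $p_i(\xx)$ and $q_i(\xx)$ denote the numbers of $+$'s and $-$'s of $\xx$ lying in $U_i$; the condition that neither $\sigma(\xx^+)$ nor $\sigma(\xx^-)$ contains an edge of $\H$ then translates into $\sum_i \min(p_i,r_i)\leq k-1$ and $\sum_i\min(q_i,r_i)\leq k-1$.

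A simple surgery observation is central both to the computation of $\alt_\sigma(\H)$ and to condition~(b): if some block satisfies $|p_i - q_i|\geq 2$, then pigeonhole forces $\xx|_{U_i}$ to contain a same-sign run of length at least two, and removing one element of such a run preserves $\alt(\xx)$ and both cost constraints while reducing $|\xx|$ by one. Iterating this reduction shows that $\alt_\sigma(\H)$ equals the maximum $M$ of $\sum_i s_i$ taken over edge-avoiding sign vectors that are alternating within each block, and together with~\eqref{eq:alter} this yields $\chi(\KG(\H)) \geq n-M$. For the matching upper bound, I would construct a proper coloring of $\KG(\H)$ with $n-M$ colors by a Schrijver-type construction: order the blocks of $\sigma$ and color each edge $A$ by its smallest $\sigma$-position, with a merging rule assigning a single color to all edges whose smallest $\sigma$-position lies beyond a threshold. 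The technical check, which is where the hypothesis $n_i\neq 2r_i$ enters, is to select the threshold so that the merged color class cannot contain two disjoint edges of $\H$.

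The final condition~(b) is where the real work lies. Assume for contradiction that an edge-avoiding sign vector $\xx$ satisfies $\alt(\xx)\geq M$ and $|\xx|>M$. Then $\alt(\xx) = M$ and $\xx$ is not alternating, so some block $U_i$ has $|p_i-q_i|\geq 2$. I would now perform an augmentation surgery complementary to the reduction above: the surplus sign in $U_i$ is relocated into a compatible zero slot in another block in order to produce $\xx'$ which is still edge-avoiding but has $\alt(\xx')>M$, contradicting the maximality of $M$. The hypothesis $n_i\neq 2r_i$ is precisely what guarantees a compatible slot: when $n_i<2r_i$ the cost constraint for that block has slack, and when $n_i>2r_i$ there is always an unused position inside the block. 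Carrying out this augmentation uniformly---while simultaneously tracking the effect on both cost constraints and on the alternation count---is where I expect the main technical obstacle to lie, and is exactly the step where the equality $n_i=2r_i$ would make the argument fail.
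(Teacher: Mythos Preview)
The paper does not actually give a proof of this lemma: it explicitly says ``The proof of Lemma~\ref{lem:PartitionMatroid} is long and technical. To ease the reading of the paper, we omit it'' and refers to an external file. So there is no in-paper argument to compare against, and I can only assess your outline on its own terms.

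Your overall architecture is the natural one: pick $\sigma$ so that each $U_i$ is a contiguous block, compute $\alt_\sigma(\H)$ via fully alternating vectors, match it against $\chi(\KG(\H))$ by an explicit coloring, and then verify the second niceness clause. The translation of ``$\sigma(\xx^+)$ contains no edge'' into $\sum_i\min(p_i,r_i)\le k-1$ is correct, and the reduction surgery (deleting one term of a same-sign run keeps $\alt$ and both constraints) is fine for showing that $\alt_\sigma(\H)$ is realised by a globally alternating vector.

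Where the outline becomes genuinely incomplete is in two places. First, the upper bound $\chi(\KG(\H))\le n-M$: ``Schrijver-type coloring with a merged tail class'' is a heuristic, not an argument. You have not said what $M$ is as a function of the $|U_i|$, $r_i$, and $k$, nor why the tail class is an independent set of $\KG(\H)$; this is where the combinatorics of the partition constraints has to be unwound, and the block order of $\sigma$ will matter. Second, your handling of the clause ``$\alt(\xx)\ge\alt_\sigma(\H)$ and $|\xx|>\alt_\sigma(\H)$ forces an edge'' is not yet a proof. The proposed augmentation (``relocate the surplus sign in $U_i$ into a compatible zero slot in another block to raise $\alt$'') does not obviously preserve the two constraints $\sum_j\min(p_j,r_j)\le k-1$ and $\sum_j\min(q_j,r_j)\le k-1$, and it is not clear why it increases $\alt$. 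Your stated dichotomy for $|U_i|\ne 2r_i$ is also not right as written: ``$n_i<2r_i$ gives slack in the cost constraint for block $i$'' fails already for $n_i=5$, $r_i=3$, $p_i=4$, $q_i=1$, where $\min(p_i,r_i)=r_i$. The hypothesis $|U_i|\ne 2r_i$ is genuinely needed (with $U_1=\{1,2\}$, $U_2=\{3,4,5\}$, $r_1=1$, $r_2=2$, $k=2$ and $\xx=(+,+,-,0,0)$ one gets an edge-avoiding vector with $\alt(\xx)=\alt_\sigma(\H)=2$ and $|\xx|=3$), but its precise role in the argument still has to be pinned down. As the authors themselves warn, this step is long and technical; your sketch does not yet contain the key idea that makes it go through.
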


The proof of Lemma~\ref{lem:PartitionMatroid} is long and technical. To ease the reading of the paper, we omit it. It can however be found at the following address:

%\begin{quotation}
{\footnotesize\noindent\url{http://facultymembers.sbu.ac.ir/hhaji/documents/Publications_files/Partition-Matroids-Are-Nice.pdf}.}
%\end{quotation}

Theorem~\ref{thm:main} combined with Lemma~\ref{lem:cd2} shows in particular that if a nonempty hypergraph $\H$ has no singletons and is such that $\chi(\KG(\H)=\cd_2(\H)$, then we have the existence of a colorful copy of $K^*_{t,t}$. This is exactly the statement of Corollary~\ref{cor:cd2}. We have already mentioned that this corollary implies immediately the alternative Kneser coloring lemma with the help of the Lov\'asz theorem. Actually, the alternative Kneser coloring lemma can also be obtained from Theorem~\ref{thm:main} (with $s=1$) and Lemma~\ref{lem:PartitionMatroid} together (either by taking $m=n$ and $r_i=|U_i|=1$ for every $i$, or by taking $m=1$ and $r_1=k$).

%{\color{blue} Since $\chi(\KG(n,k))=\cd_2\left([n],{{[n]}\choose k}\right)=n-2k+2$, Theorem~\ref{thm:alternative} is a special case of Corollary~\ref{cor:cd2}. Also, Theorem~\ref{thm:alternative} can also be obtained from Theorem~\ref{thm:main} (with $s=1$) and Lemma~\ref{lem:PartitionMatroid} (either by taking $m=n$ and $r_i=|U_i|=1$ for every $i$, or by taking $m=1$ and $r_1=k$).}

\subsection{Circular chromatic number}\label{subsec:circ}

Let $G=(V,E)$ be a graph. For two integers $p\geq q\geq 1$, a {\em $(p,q)$-coloring} of $G$ is a mapping $c:V\rightarrow[p]$ such that $q\leq|c(u)-c(v)|\leq p-q$ for every edge $uv$ of $G$. The {\em circular chromatic number} of $G$ is $$\chi_c(G)=\inf\{p/q:\; G\mbox{ admits a $(p,q)$-coloring}\}.$$ The inequalities $\chi(G)-1<\chi_c(G)\leq\chi(G)$ hold. Moreover, the infimum in the definition is actually a minimum, i.e., $\chi_c(G)$ is attained for some $(p,q)$-coloring (and thus the circular chromatic number is always rational), see \cite{Zh01} for details. The question of determining which graphs $G$ are such that $\chi_c(G)=\chi(G)$ has received a considerable attention~\cite{Zh99,Zhusurvey2006}. In particular, a conjecture by Johnson, Holroyd, and Stahl~\cite{JHS1997} stated that the circular chromatic number of $\KG(n,k)$ is equal to its chromatic number.  It is known (cf.~\cite{CLZ13} or ~\cite{HaTa10}) and easy to prove by the definition of circular coloring that if $G$ is $t$-colorable and every proper coloring with $t$ colors of $G$ contains a $K^*_{t,t}$ with all $t$ colors appearing on each side, then $\chi_c(G)=\chi(G)$. With his colorful theorem, Chen was able to prove the Johnson-Holroyd-Stahl conjecture, after partial results obtained by Hajiabolhassan and Zhu~\cite{HaZh03}, 
Meunier~\cite{Me05}, and Simonyi and Tardos~\cite{SiTa06}. Corollary~\ref{cor:cd2} implies the following result.

\begin{corollary}\label{cor:circ}
Let $\H$ be a nonempty hypergraph such that $\chi(\KG(\H))=\cd_2(\H)$. Then $\chi(\KG(\H))=\chi_c(\KG(\H))$.
\end{corollary}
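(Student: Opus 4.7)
The plan is essentially a one-step deduction: combine Corollary~\ref{cor:cd2} with the criterion recalled in the paragraph immediately preceding the statement, which asserts that a $t$-chromatic graph $G$ in which every proper $t$-coloring contains a colorful $K^*_{t,t}$ with all $t$ colors appearing on each side must satisfy $\chi_c(G)=\chi(G)$.

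Concretely, I would set $t=\chi(\KG(\H))=\cd_2(\H)$ and, assuming for the moment that $\H$ has no singleton edges, apply Corollary~\ref{cor:cd2} directly to $\H$. That corollary delivers the very property assumed by the criterion: every proper $t$-coloring of $\KG(\H)$ contains a colorful copy of $K^*_{t,t}$ with all $t$ colors on each side. Feeding this into the criterion (justified in \cite{CLZ13,HaTa10} by a short direct inspection of $(p,q)$-colorings) immediately yields $\chi_c(\KG(\H))=t=\chi(\KG(\H))$.

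The only mildly delicate point — and the closest thing to an obstacle — is that Corollary~\ref{cor:cd2} explicitly requires the hypergraph to have no singletons, whereas the statement to be proved does not. To close this gap I would perform a routine singleton reduction: if $\{v\}\in E(\H)$, then any set $U$ witnessing $\cd_2(\H)$ must contain $v$, so deleting $v$ from $V(\H)$ (and all edges through it) yields a smaller hypergraph $\H'$ with $\cd_2(\H')=\cd_2(\H)-1$ and, after checking how $\KG(\H)$ sits over $\KG(\H')$, with $\chi(\KG(\H'))$ dropping by exactly one as well. Since adding back the singleton amounts to a coning-type operation on the Kneser graph, for which the equality $\chi_c=\chi$ is inherited from the base, an induction on the number of singletons reduces to the singleton-free case treated above.

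The main ``obstacle'' is therefore not really an obstacle at the level of this corollary: the substantive content lies in Corollary~\ref{cor:cd2} (itself a specialization of Theorem~\ref{thm:main}), and the remaining ingredient is the well-documented link between colorful $K^*_{t,t}$ subgraphs and the circular chromatic number. The proof is then a clean application of these two facts.
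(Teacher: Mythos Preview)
For the singleton-free case you do exactly what the paper does: combine Corollary~\ref{cor:cd2} with the $K^*_{t,t}$ criterion for $\chi_c=\chi$.

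For the singleton case your route diverges from the paper's and is both more complicated and imprecisely justified. The paper handles it in one stroke: if $\{v\}\in E(\H)$, then $\KG(\H)$ is homomorphically equivalent to the graph obtained by collapsing all edges of $\H$ through $v$ to a single vertex, and that vertex is universal (it is adjacent to every edge avoiding $v$); since any graph with a universal vertex satisfies $\chi_c=\chi$ \cite{Gu93,Zh92bis}, the conclusion follows directly, with no induction and without ever invoking the hypothesis $\chi(\KG(\H'))=\cd_2(\H')$ on the reduced hypergraph. Your inductive scheme is not wrong, but the phrase ``$\chi_c=\chi$ is inherited from the base'' is misleading: the cone over \emph{any} graph has a universal vertex and hence satisfies $\chi_c=\chi$ regardless of whether the base does, so your induction hypothesis is never actually used. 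You should also be explicit that $\KG(\H)$ is not literally the cone over $\KG(\H')$ (there may be non-singleton edges through $v$, forming an independent set not adjacent to $\{v\}$); what makes the argument work is that $\KG(\H)$ is homomorphically equivalent to that cone, which your ``coning-type'' hedge gestures at but does not spell out.
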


%\medskip
%
%{\em Let $\H$ be a nonempty hypergraph such that $\chi(\KG(\H))=\cd_2(\H)$. Then $\chi(\KG(\H))=\chi_c(\KG(\H))$.}
%
%\medskip

If $\H$ has a singleton, it is not nice and we cannot apply Theorem~\ref{thm:main}. Nevertheless, its conclusion holds since $\KG(\H)$ is then homomorphic to a graph 
with a vertex adjacent to any other vertex -- a so-called {\em universal} vertex -- and it is known that every 
graph with a universal vertex has its chromatic number equal to its circular chromatic 
number \cite{Gu93,Zh92bis}.

Note moreover that Theorem~\ref{thm:main} implies that all these graphs satisfy the Hedetniemi conjecture for the circular chromatic number, proposed by Zhu~\cite{Zh92bis}.

\subsection{Proof of Theorem~\ref{thm:main}}

The original proof of the alternative Kneser coloring lemma given by Chen~\cite{Ch11} was simplified by Chang, Liu, and Zhu~\cite{CLZ13}. A further simplification was subsequently obtained by Liu and Zhu~\cite{LZ15}. In our approach, we reuse ideas proposed in these papers, as well as techniques developed by the last two authors to deal with the categorical product of general Kneser graphs~\cite{HaMe16}. %Actually, in the special case of the usual Kneser graphs, our proof is a further simplification of the proof by Liu and Zhu: we define the labeling of the sign vectors in such a way that it is an order-preserving map.

The proof relies on Lemma~\ref{lem:chen} in a crucial way but it is rather intriguing that the argument does not work when $\min_{j\in[s]}\chi(\KG(\H_j))\leq 2$. We split therefore the proof into two parts: the first part is about the case $\min_{j\in[s]}\chi(\KG(\H_j))\leq 2$ and the second part is about the case $\min_{j\in[s]}\chi(\KG(\H_j))\geq 3$. The first part is rather straightforward and does not use other results.

Within the proof, for a nice hypergraph $\H_j$, we denote by $n_j$ the cardinality of $V(\H_j)$ and by $\sigma_j$ the bijection $[n_j]\rightarrow V(\H_j)$ whose existence is ensured by the niceness of $\H_j$. Note that because of the inequality~\eqref{eq:alter}, we have $\alt_{\sigma_j}(\H_j)=\alt(\H_j)$ for every $j$. Moreover, we identify each element of $V(\H_j)$ with its preimage by $\sigma_j$. Doing this, we get that for every $j$, the set $V(\H_j)$ is $[n_j]$ and that $\sigma_j$ is the identity permutation. Thus, if $\xx\in\{+,-,0\}^{n_j}$ is such that neither $\xx^+$ nor $\xx^-$ contains an edge of $\H_j$, then we have $\alt(\xx)\leq\alt(\H_j)$. This will be used throughout the proof without further mention.

We set $n:=\sum_jn_j$.

\begin{proof}[Proof of Theorem~\ref{thm:main} when $\min_{j\in[s]}\chi(\KG(\H_j))\leq 2$]Suppose first that $\min_{j\in[s]}\chi(\KG(\H_j))=1$ holds, i.e., at least one of the $\KG(\H_j)$ is a stable set, say $\KG(\H_1)$ without loss of generality. Since $\H_1$ is nice and $\chi(\KG(\H_1))=1$, we have $\alt(\H_1)=n_1-1$. Hence, according to the definition of $\alt(\H_1)$, there exists an $\xx\in\{+,-,0\}^{n_1}$ with no edge in $\xx^+$ and no edge in $\xx^-$ and such that $\alt(\xx)=|\xx|=n_1-1$. Moreover, this $\xx$ is such that making the $0$ entry of $\xx$ a $+$ creates an edge $e^+$ of $\H_1$ in $\xx^+$, and similarly making this $0$ a $-$ creates  an edge $e^-$ in $\xx^-$. These two edges $e^+$ and $e^-$ are distinct because $\H_1$ has no singletons. Since $\KG(\H_1)\times\cdots\times\KG(\H_s)$ is in this case also a stable set, with at least two vertices whose existence is ensured by $e^+$ and $e^-$, we are done: there is a $K_{1,1}^*$.

Let us suppose now that $\min_{j\in[s]}\chi(\KG(\H_j))=2$. Again, assume without loss of generality that the minimum is attained for $j=1$. Since $\H_1$ is nice and $\chi(\KG(\H_1))=2$, we have $\alt(\H_1)=n_1-2$. There exists thus an $\xx\in\{+,-,0\}^{n_1}$ with no edge in $\xx^+$ and no edge in $\xx^-$ and such that $\alt(\xx_1)=|\xx_1|=n_1-2$. Similarly, as before, using the $0$ entries of $\xx_1$, we can find in $\H_1$ the following four distinct edges, $e^+, e^-, f^+, f^-$ such that on the one hand, $e^+$ and $f^-$ are disjoint and the other hand $e^-$ and $f^+$ are disjoint. $\KG(\H_1)$ being bipartite, $\KG(\H_1)\times\cdots\times\KG(\H_s)$ is also bipartite, with at least two disjoint edges. There is thus a $K_{2,2}^*$, and we are done since any $2$-coloring of $\KG(\H_1)\times\cdots\times\KG(\H_s)$ provides a colorful $K_{2,2}^*$.
\end{proof}

\begin{proof}[Proof of Theorem~\ref{thm:main} when $\min_{j\in[s]}\chi(\KG(\H_j))\geq 3$]Define $t=\min_{j\in[s]}\chi(\KG(\H_j))$. Note that since the $\H_j$'s are nice, we have $t\leq n_j-\alt(\H_j)$ for every $j$. This will be used in the proof without further mention. Let $c$ be a proper coloring of $\KG(\H_1)\times\cdots\times\KG(\H_s)$ with $[t]$ as the color set, which exists because of the easy direction of Hedetniemi's conjecture. We suppose throughout the proof that $n-t$ is even. This can be done without loss of generality simply by adding a dummy vertex $n_j+1$ to any of the hypergraphs $\H_j$'s. Doing this, we do~not change the general Kneser graphs and thus $t$ remains the same, $n$ increases by exactly one, and the modified $\H_j$ remains nice.

We will define a map $\lambda:\{+,-,0\}^n\setminus \{\zero\}\rightarrow\{\pm 1,\ldots,\pm n\}$ for which we will apply Chen's lemma (Lemma~\ref{lem:chen}). To do this, we need to introduce some notations. For $\xx\in\{+,-,0\}^n\setminus \{\zero\}$, we define $\xx(1)\in\{+,-,0\}^{n_1}$ to be the  first $n_1$ coordinates of $\xx$, $\xx(2)\in\{+,-,0\}^{n_2}$ to be the next $n_2$ coordinates of $\xx$, and so on, up to $\xx(s)\in\{+,-,0\}^{n_s}$ to be the last $n_s$ coordinates of $\xx$. We define moreover $A_j(\xx)$ to be the set of signs $\varepsilon\in\{+,-\}$ such that $\xx(j)^{\varepsilon}$ contains at least one edge of $\H_j$. Let us just come back to the notation, to avoid any ambiguity: $\xx(j)^{\varepsilon}$ is the set of $i\in[n_j]$ such that $x_{i+\sum_{j'=1}^{j-1}n_{j'}}=\varepsilon$.

We define $\lambda(\xx)$ by defining its sign $s(\xx)\in\{+,-\}$ and its absolute value $v(\xx)$. In other words, $\lambda(\xx):=s(\xx)v(\xx)$. Two cases have to be distinguished. We proceed similarly as in \cite{HaMe16}. \\

\noindent {\bf First case:} $\bigcap_j A_j(\xx)=\varnothing$.

Set
$$\begin{array}{rcl}
v(\xx) & = & \ds{\sum_{j:\;|A_j(\xx)|=0}\alt(\xx(j))+\sum_{j:\;|A_j(\xx)|=2}|\xx(j)|} \\ \\

& & +\ds{\sum_{j:\;|A_j(\xx)|=1}\left(1+\max\{\alt(\widetilde{\yy}):\;\widetilde{\yy}\preceq\xx(j)\mbox{ and }E(\H_j[\widetilde\yy^+])=E(\H_j[\widetilde\yy^-])=\varnothing\}\right)}, \end{array}$$
where $\H_j[\widetilde\yy^+]$ (resp. $\H_j[\widetilde\yy^-]$) denotes the restriction of $\H_j$ to $\widetilde\yy^+$ (resp. $\widetilde\yy^-$).
In this formula, for $j$ with $A_j(\xx)$ of cardinality one, we are looking for a $\widetilde\yy\preceq\xx(j)$ with the longest alternating subsequence such that neither $\widetilde\yy^+$, nor $\widetilde\yy^-$, contains an edge of $\H_j$. The sign $s(\xx)$ is defined to be the first nonzero coordinate of $\xx$ if none of the $A_j(\xx)$'s is of cardinality one, and to be the sign in the $A_j(\xx)$ of cardinality one with the smallest possible $j$, otherwise. \\

\noindent {\bf Second case:} $\bigcap_j A_j(\xx)\neq\varnothing$.

 Define
$$c^{\varepsilon}(\xx)=\max\{c(e_1,\ldots,e_s):\;e_j\subseteq\xx(j)^{\varepsilon}\;\mbox{and}\;e_j\in E(\H_j)\;\mbox{for all $j\in[s]$}\}$$ where it takes the value $-\infty$ if there is no such $s$-tuple $(e_1,\ldots,e_s)$.
Define moreover $c(\xx)=\max\left(c^+(\xx),c^-(\xx)\right)$.
Since $\bigcap_j A_j(\xx)\neq\varnothing$, at least one of $c^+(\xx)$ and $c^-(\xx)$ is finite. Now, set $$v(\xx)=n-t+c(\xx).$$
The sign $s(\xx)$ is $+$ if $c^+(\xx)>c^-(\xx)$, and $-$ otherwise. Note that since $c$ is a proper coloring, we have $c^+(\xx)\neq c^-(\xx)$.

\begin{claim}\label{claim:vt}
Let $\xx\in\{+,-,0\}^n\setminus\{\zero\}$. We have $$\bigcap_j A_j(\xx)=\varnothing\quad\Longleftrightarrow\quad v(\xx)\leq n-t.$$ Moreover, if $v(\xx)=n-t$, then there exists $j_0\in[s]$ such that
\begin{itemize}
\item $A_{j_0}(\xx)=\varnothing$ and $\alt(\xx(j_0))=|\xx(j_0)|=\alt(\H_{j_0})=n_{j_0}-t$,
\item $|A_j(\xx)|=2$ and $|\xx(j)|=n_j\quad\forall j\neq j_0$.
\end{itemize}
\end{claim}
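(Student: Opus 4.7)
The plan is to do a case analysis based on whether $\bigcap_j A_j(\xx)$ is empty, partitioning $[s]$ as $S_0 \cup S_1 \cup S_2$ with $S_k := \{j : |A_j(\xx)| = k\}$. Throughout I will rely on the basic inequality $\alt(\H_j) \leq n_j - \chi(\KG(\H_j)) \leq n_j - t$, which follows from niceness. The easy direction of the equivalence is immediate: if $\bigcap_j A_j(\xx) \neq \varnothing$ we are in the second case of the definition of $v$, so $v(\xx) = n - t + c(\xx)$, and $c(\xx) \in [t]$ gives $v(\xx) \geq n - t + 1 > n - t$.

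For the other direction, assume $\bigcap_j A_j(\xx) = \varnothing$, so we are in the first case. I would rewrite
$$n - v(\xx) \;=\; \sum_{j \in S_0}\bigl(n_j - \alt(\xx(j))\bigr) \;+\; \sum_{j \in S_1}\bigl(n_j - 1 - \max\alt(\widetilde\yy)\bigr) \;+\; \sum_{j \in S_2}\bigl(n_j - |\xx(j)|\bigr).$$
Each $S_0$-summand is $\geq t$ (since $\alt(\xx(j)) \leq \alt(\H_j)$ when $A_j = \varnothing$), each $S_1$-summand is $\geq t - 1$ by the same bound applied to the optimal $\widetilde\yy$, and each $S_2$-summand is $\geq 0$. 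The hypothesis $\bigcap_j A_j = \varnothing$ forces either $S_0 \neq \varnothing$ or, when $S_0 = \varnothing$, forces the singletons $A_j$ ($j \in S_1$) to realize both signs, whence $|S_1| \geq 2$. In either sub-case $n - v(\xx) \geq \min\{t, 2(t-1)\} = t$ (using $t \geq 2$), which proves $v(\xx) \leq n - t$ and completes the equivalence.

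For the \emph{moreover} assertion, suppose $n - v(\xx) = t$. Using $t \geq 3$, the strict inequalities $2t > t$, $t + (t-1) > t$, $2(t-1) > t$ rule out $|S_0| \geq 2$, the mixed case $|S_0|, |S_1| \geq 1$, and the case $|S_0| = 0$ with $|S_1| \geq 2$; the remaining cases $(|S_0|, |S_1|) \in \{(0,0),(0,1)\}$ both contradict $\bigcap_j A_j = \varnothing$ (in each, every $A_j$ contains a common sign). Hence $|S_0| = 1$ and $S_1 = \varnothing$; let $\{j_0\} := S_0$. Tightness then forces $n_{j_0} - \alt(\xx(j_0)) = t$ and $|\xx(j)| = n_j$ for every $j \neq j_0$; combined with $\alt(\xx(j_0)) \leq \alt(\H_{j_0}) \leq n_{j_0} - t$, it gives $\alt(\xx(j_0)) = \alt(\H_{j_0}) = n_{j_0} - t$.

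The final equality $|\xx(j_0)| = \alt(\xx(j_0))$ is the only conceptually nontrivial step, and it is where I would invoke the second axiom in the definition of niceness for $\H_{j_0}$. Since $A_{j_0}(\xx) = \varnothing$, neither $\xx(j_0)^+$ nor $\xx(j_0)^-$ contains an edge of $\H_{j_0}$, so niceness forbids having simultaneously $\alt(\xx(j_0)) \geq \alt(\H_{j_0})$ and $|\xx(j_0)| > \alt(\H_{j_0})$. Since the former holds with equality by the previous paragraph, the latter must fail, yielding $|\xx(j_0)| \leq \alt(\H_{j_0}) = \alt(\xx(j_0)) \leq |\xx(j_0)|$ and hence equality throughout. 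The rest is direct bookkeeping with the definition of $v$; this last appeal to niceness is the only step that is not purely combinatorial.
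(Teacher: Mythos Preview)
Your proof is correct and follows essentially the same approach as the paper's. Both arguments bound each summand of $v(\xx)$ in the first case via $\alt(\H_j)\leq n_j-t$, then use the emptiness of $\bigcap_j A_j(\xx)$ to force enough ``deficient'' indices (your $S_0\cup S_1$, the paper's ``$A_j$ not of cardinality two''), and finally invoke the second niceness axiom for the equality $|\xx(j_0)|=\alt(\xx(j_0))$; your explicit partition $S_0,S_1,S_2$ is a cleaner bookkeeping device but not a different idea.
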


\begin{proof-claim}
Let $\xx\in\{+,-,0\}^n\setminus\{\zero\}$. Let us prove the equivalence. If $\bigcap_j A_j(\xx)\neq\varnothing$, then according to the definition of $\lambda$, we necessarily have $v(\xx)\geq n-t+1$. Now, suppose that $\bigcap_j A_j(\xx)=\varnothing$. At least one set $A_j(\xx)$ is not of cardinality two, otherwise $\bigcap_jA_j(\xx)$ would be nonempty. If there are at least two sets $A_j(\xx)$ not of cardinality two, say $A_{j_1}(\xx)$ and $A_{j_2}(\xx)$, then the maximum value we can get for $v(\xx)$ is at most $n+\sum_{\ell=1}^2(1+\alt(\H_{j_{\ell}})-n_{j_\ell})$, which is at most $n-2t+2$, which is itself at most $n-t-1$ since we suppose $t\geq 3$. If there is only one such set, say $A_{j_0}(\xx)$, it is necessarily empty (otherwise $\bigcap_jA_j(\xx)$ would be again nonempty), each $A_j(\xx)$ is of cardinality two except for $j=j_0$, and we have
$$v(\xx)=\alt(\xx({j_0}))+\sum_{j\neq j_0}|\xx(j)|\leq\alt(\H_{j_0})+\sum_{j\neq j_0}n_j\leq n-t,$$ which finishes the proof of the equivalence.

We have actually proved that if $v(\xx)=n-t$, then the sets $A_j(\xx)$ are all of cardinality two for $j\neq j_0$, the set $A_{j_0}(\xx)$ is empty, and
$$\alt(\xx({j_0}))+\sum_{j\neq j_0}|\xx(j)|=\alt(\H_{j_0})+\sum_{j\neq j_0}n_j=n-t.$$
Since $\alt(\xx({j_0}))\leq\alt(\H_{j_0})$ and $|\xx(j)|\leq n_j$ by definition, these inequalities are actually equalities. The hypergraph $\H_{j_0}$ being nice, the equality $\alt(\xx({j_0}))=\alt(\H_{j_0})$ and the fact that $A_{j_0}(\xx)$ is empty imply that $|\xx(j_0)|=\alt(\xx({j_0}))$. We get thus the second part of the statement.
\end{proof-claim}

The map $\lambda$ is defined on the elements of the poset $(\{+,-,0\}^n\setminus\{\zero\},\preceq)$ and takes its values in $\{\pm 1,\ldots,\pm n\}$.

\begin{claim}
$\lambda$ satisfies the condition of Lemma~\ref{lem:chen} with $\gamma=n-t$.
\end{claim}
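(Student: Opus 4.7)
The plan is to verify the four hypotheses required by Lemma~\ref{lem:chen}: that $\lambda$ takes values in $\{\pm 1,\ldots,\pm n\}$, is $\Z_2$-equivariant, is order-preserving into $Q_{n-1}$ (meaning $\xx\prec\yy$ implies $\lambda(\xx)=\lambda(\yy)$ or $|\lambda(\xx)|<|\lambda(\yy)|$), and satisfies the $\gamma=n-t$ condition. Three of the four are straightforward. The codomain bound holds because in the first case every summand in the definition of $v(\xx)$ is at most $n_j$ (using $\alt(\xx(j))\le\alt(\H_j)\le n_j-t$, $|\xx(j)|\le n_j$, and $1+\max\le 1+\alt(\H_j)\le n_j$), so $v(\xx)\le n$; in the second case $v(\xx)=n-t+c(\xx)\le n$. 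Equivariance holds because every quantity defining $v$ is symmetric under swapping $+$ and $-$, and each sign convention flips under $\xx\mapsto-\xx$ (the first nonzero, the sign of a singleton $A_j$, and the comparison $c^+$ versus $c^-$). The $\gamma$-condition is an immediate corollary of Claim~\ref{claim:vt}: the claim forces $v(\xx)=n-t$ to give $|\xx|=\alt(\H_{j_0})+\sum_{j\ne j_0}n_j=n-t$, so if $\xx\prec\yy$ and both $v$-values equal $n-t$, we would need $|\xx|=|\yy|=n-t$, contradicting $|\xx|<|\yy|$.

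The substantive work is in order-preservation. First, $\xx\preceq\yy$ gives $A_j(\xx)\subseteq A_j(\yy)$ for every $j$, so a second-case $\xx$ forces $\yy$ also to be in the second case. When $\xx$ is in the first case and $\yy$ in the second, Claim~\ref{claim:vt} yields $v(\xx)\le n-t<v(\yy)$. When both are in the second case, $\xx(j)^\varepsilon\subseteq\yy(j)^\varepsilon$ yields $c^\varepsilon(\xx)\le c^\varepsilon(\yy)$ for each $\varepsilon$, hence $c(\xx)\le c(\yy)$; on equality, the fact that $c$ is a proper coloring of the categorical product forces $c^+(\yy)\ne c^-(\yy)$, which pins down and matches the signs.

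It remains to handle both $\xx,\yy$ in the first case. I would compare the $j$-summands across the subcases $(|A_j(\xx)|,|A_j(\yy)|)$: the subcases $(0,1)$, $(0,2)$ and $(1,2)$ each give strict per-$j$ inequality. The key observation for $(1,2)$ is that whenever $A_j(\xx)\ne\varnothing$, the choice $\widetilde\yy=\xx(j)$ is disallowed inside the defining $\max$, forcing $\max<|\xx(j)|<|\yy(j)|$ (with $|\yy(j)|\ge|\xx(j)|+1$ because an edge on the side of $\yy(j)$ not yet covered in $\xx(j)$ requires at least one new nonzero there). Therefore $v(\xx)=v(\yy)$ forces $A_j(\xx)=A_j(\yy)$ for every $j$, and $\xx(j)=\yy(j)$ on every $(2,2)$-block. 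For sign matching in this extremal case, if some $|A_j|=1$ then both signs read off the common singleton $A_{j^*}(\xx)=A_{j^*}(\yy)$ at the same smallest index $j^*$; otherwise, both signs are the first nonzero, and alt-preservation on any $(0,0)$-block forbids $\yy$ from being nonzero on blocks where $\xx$ is zero and forbids prepending an opposite-sign coordinate inside the leading block of $\xx$.

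I expect the main obstacle to be this last sign-matching step inside the $(0,0)$ analysis: the crux, worth isolating as a small sublemma, is that $\xx(j)\prec\yy(j)$ with $\alt(\xx(j))=\alt(\yy(j))$ forces every newly added nonzero of $\yy(j)$ lying before the first nonzero of $\xx(j)$ to carry the same sign as that first nonzero.
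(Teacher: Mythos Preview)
Your proof is correct and follows essentially the same approach as the paper's. Where the paper reduces the sign-matching check to the case where $\yy$ covers $\xx$ by a single entry (and declares first-case monotonicity of $v$ ``clear from the definition''), you instead establish per-summand strict inequalities for the $(0,1),(0,2),(1,2)$ transitions to force $A_j(\xx)=A_j(\yy)$ globally before matching signs; both routes ultimately hinge on the same alternation sublemma you isolate (which the paper leaves as ``easy to check''), and your $\gamma$-argument via $|\xx|=|\yy|=n-t$ is a touch cleaner than the paper's reconstruction of $\xx=\yy$.
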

\begin{proof-claim}
The fact that $\lambda$ is antipodal is direct from the definition. Let us check that $\lambda$ is an order-preserving map $(\{+,-,0\}^n\setminus\{\zero\},\preceq)\rightarrow Q_{n-1}$. To do this, take $\xx$ and $\yy$ such that $\xx\preceq\yy$. We have $\bigcap_j A_j(\xx)\subseteq\bigcap_j A_j(\yy)$. If $\bigcap_j A_j(\xx)$ and $\bigcap_j A_j(\yy)$ are both empty or both nonempty, it is clear from the definition that $v(\xx)\leq v(\yy)$. If $\bigcap_j A_j(\xx)$ is empty and $\bigcap_j A_j(\yy)$ is nonempty, then according to Claim~\ref{claim:vt}, we have $v(\xx)\leq n-t$ and $v(\yy)\geq n-t+1$. This shows that $v(\xx)\leq v(\yy)$ when $\xx\preceq\yy$.

To finish the checking that $\lambda$ is order-preserving, we have to show that when $\xx\preceq\yy$, the quantity $\lambda(\xx)+\lambda(\yy)$ is not equal to $0$. Assume for a contradiction that there exist such $\xx$ and $\yy$ with $\xx\preceq\yy$ and $\lambda(\xx)+\lambda(\yy)=0$. Since $v$ is nondecreasing, we can assume that $\yy$ differs from $\xx$ only by one entry, i.e., making a $0$ entry of $\xx$ a nonzero one leads to $\yy$. Let us suppose that $\bigcap_j A_j(\xx)=\varnothing$. Since $v(\xx)=v(\yy)$, the entry that differs between $\xx$ and $\yy$ belongs to an $\xx(j)$ with $|A_j(\xx)|\neq 2$. If $|A_j(\xx)|=0$, the new $A_j(\xx)$ is still empty, and it is easy to check that the sign cannot change. If $|A_j(\xx)|=1$, the new $A_j(\xx)$ is still of cardinality one, and the sign does not change either. We see thus that the assumption implies that we necessarily have $\bigcap_j A_j(\xx)\neq\varnothing$. But then again, the sign cannot change when we go from $\xx$ to $\yy$ since $c$ is a proper coloring.

Finally, let us check that there are no $\xx\prec\yy$ such that $|\lambda(\xx)|=|\lambda(\yy)|=\gamma$. Suppose for a contradiction that such $\xx$ and $\yy$ exist. Claim~\ref{claim:vt} applied on $\xx$ and $\yy$ shows that there is a unique $j_0$ such that $A_{j_0}(\xx)$ is empty and a unique $j_0'$ such that $A_{j_0'}(\yy)$ is empty. Since $\xx\prec\yy$, we have $j_0=j_0'$. Claim~\ref{claim:vt} again shows then that $|\xx(j)|=|\yy(j)|$ for all $j\neq j_0$. Hence $\xx=\yy$, and we get a contradiction.
%According to Claim~\ref{claim:vt}, there is a $j_0$ such that $\alt(\xx(j_0))=\alt(\H_{j_0})$ and such that $\xx(j)$ has no zero entries for $j\neq j_0$. It implies that $|\xx(j_0)|<|\yy(j_0)|$ and that $|A_j(\yy)|=2$ for $j\neq j_0$. Since $\H_{j_0}$ is nice and since $\alt(\xx(j_0))=\alt(\H_{j_0})$, the set $A_{j_0}(\yy)$ is not empty. Claim~\ref{claim:vt}, this time applied to $\yy$, shows that $\bigcap_{j=1}^sA_j(\yy)=\emptyset$ and we have thus got a contradiction.
\end{proof-claim}

We can thus apply Lemma~\ref{lem:chen}. There are two chains
$$\xx_{1}\preceq\cdots\preceq\xx_{n}\quad\mbox{and}\quad\yy_{1}\preceq\cdots\preceq\yy_{n}$$ such that
$$\lambda(\xx_{i})=(-1)^ii\quad \mbox{for all $i$}\qquad\mbox{and }\qquad\lambda(\yy_{i})=(-1)^ii \quad\mbox{for $i\neq\gamma$}$$
and such that $\xx_{\gamma}=-\yy_{\gamma}$, with $\gamma=n-t$. We use now $j_0$ as the element of $[s]$ whose existence is ensured by Claim~\ref{claim:vt} applied on $\xx=\xx_{\gamma}$.

\begin{claim}\label{claim:prog}
On the one hand, we have $\xx_{\gamma}(j)=\cdots=\xx_n(j)$ for every $j\neq j_0$. On the other hand, $\xx_{i+1}(j_0)$ is obtained from $\xx_i(j_0)$ by replacing exactly one of its zero entries by a nonzero one, for $i=\gamma,\ldots,n-1$. The same assertion holds for the $\yy_i$'s.
\end{claim}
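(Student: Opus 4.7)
The plan is to combine Claim~\ref{claim:vt} (applied at step $i=\gamma$) with an elementary support-counting argument along the strict chain $\xx_\gamma\prec\cdots\prec\xx_n$.

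First I would apply Claim~\ref{claim:vt} to $\xx_\gamma$, which is valid since $|\lambda(\xx_\gamma)|=\gamma=n-t$. This yields an index $j_0\in[s]$ with $A_{j_0}(\xx_\gamma)=\varnothing$, $|\xx_\gamma(j_0)|=n_{j_0}-t$, and $|\xx_\gamma(j)|=n_j$ for every $j\neq j_0$. In particular $|\xx_\gamma|=n-t$, so all $t$ zero entries of $\xx_\gamma$ lie in the $j_0$-block; outside that block the vector is already completely filled with signs.

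Next I would observe that the chain $\xx_\gamma\preceq\xx_{\gamma+1}\preceq\cdots\preceq\xx_n$ is strict, since the labels $|\lambda(\xx_i)|=i$ are pairwise distinct. Each strict $\preceq$-step converts at least one zero into a nonzero entry, so $|\xx_{i+1}|\geq|\xx_i|+1$. Telescoping gives $|\xx_n|\geq|\xx_\gamma|+(n-\gamma)=n$, and combined with the trivial bound $|\xx_n|\leq n$ this forces every increment $|\xx_{i+1}|-|\xx_i|$ to equal exactly $1$. Because for every $j\neq j_0$ the block $\xx_\gamma(j)$ already has full support $n_j$ and because $\preceq$ preserves nonzero entries, we get $\xx_i(j)=\xx_\gamma(j)$ for all $i\geq\gamma$ and all $j\neq j_0$. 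Hence the unique new nonzero entry at each step must lie in the $j_0$-block, which is both assertions at once.

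For the $\yy_i$'s, antipodality of $\lambda$ together with $\yy_\gamma=-\xx_\gamma$ gives $\lambda(\yy_\gamma)=-\lambda(\xx_\gamma)$, hence $|\lambda(\yy_\gamma)|=\gamma$ and Claim~\ref{claim:vt} applies to $\yy_\gamma$ as well. Since $A_j(\yy_\gamma)=A_j(-\xx_\gamma)$ has the same cardinality as $A_j(\xx_\gamma)$, the produced index is again $j_0$, and the counting argument reruns verbatim on $\yy_\gamma\prec\yy_{\gamma+1}\prec\cdots\prec\yy_n$ (strictness here uses the distinct labels $|\lambda(\yy_i)|=i$ for $i\neq\gamma$, together with $\yy_\gamma=-\xx_\gamma\neq\yy_{\gamma+1}$ which follows from $v(\yy_{\gamma+1})=\gamma+1>\gamma=v(\yy_\gamma)$). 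I do not anticipate any real obstacle: the whole content of the claim is that $\xx_\gamma$ already concentrates all its zeros in a single block, and the chain from step $\gamma$ to step $n$ has exactly enough room for one zero-to-nonzero conversion per step.
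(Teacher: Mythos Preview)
Your proof is correct and follows essentially the same approach as the paper: both use Claim~\ref{claim:vt} at $\xx_\gamma$ to conclude that all blocks $j\neq j_0$ are already full, then a counting argument along the strict chain to force exactly one new nonzero entry per step in the $j_0$-block. The paper phrases the count via ``distinct $\lambda$-values force distinct $\xx_i(j_0)$'s, and there are only $t$ zeros to fill over $t$ strict steps,'' whereas you phrase it via telescoping $|\xx_{i+1}|\geq|\xx_i|+1$; these are equivalent. (Your separate check that $\yy_\gamma\neq\yy_{\gamma+1}$ is harmless but unnecessary, since you already computed $|\lambda(\yy_\gamma)|=\gamma\neq\gamma+1=|\lambda(\yy_{\gamma+1})|$.)
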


\begin{proof-claim}
According to Claim~\ref{claim:vt}, $\xx_{\gamma}(j)$ has no zero entries for $j\neq j_0$. Because of the chain $\xx_{\gamma}\preceq\cdots\preceq\xx_n$, we get the first part of the statement.  According to Claim~\ref{claim:vt} again, the number of zero entries of $\xx_{\gamma}(j_0)$ is exactly $t$ since $|\xx_{\gamma}(j_0)|=n_{j_0}-t$. Since $\xx_{\gamma},\ldots,\xx_n$ get distinct images by $\lambda$ and since $\xx_{\gamma}(j),\ldots,\xx_n(j)$ do not differ when $j\neq j_0$, all $\xx_{\gamma}(j_0),\ldots,\xx_n(j_0)$ are pairwise distinct. The second part of the statement follows from $n-\gamma=t$.

The assertion for the $\yy_i$'s is proved similarly.
\end{proof-claim}

We define $S$ to be $\xx_{\gamma}(j_0)^+$ and $T$ to be $\xx_{\gamma}(j_0)^-$. Note that $S$ and $T$ are disjoint and that none of them contain an edge of $\H_{j_0}$.

\begin{claim}\label{claim:defab}
There exist integers $a_{\gamma+1},\ldots,a_n,b_{\gamma+1},\ldots,b_n\in[n_{j_0}]\setminus(S\cup T)$ such that
\begin{itemize}
\item for odd $i\geq\gamma+1$:
$$\xx_i(j_0)^-=T\cup\{a_{\gamma+1},a_{\gamma+3},\ldots,a_i\}\quad\mbox{and}\quad\yy_i(j_0)^-=S\cup\{b_{\gamma+1},b_{\gamma+3},\ldots,b_i\}$$
\item for even $i\geq\gamma+2$:
$$\xx_i(j_0)^+=S\cup\{a_{\gamma+2},a_{\gamma+4},\ldots,a_i\}\quad\mbox{and}\quad\yy_i(j_0)^+=T\cup\{b_{\gamma+2},b_{\gamma+4},\ldots,b_i\}.$$
\end{itemize}
Moreover, the $a_i$'s are pairwise distinct and so are the $b_i$'s.
\end{claim}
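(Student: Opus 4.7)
The plan is to show that along the chain $\xx_\gamma\preceq\xx_{\gamma+1}\preceq\cdots\preceq\xx_n$, the new nonzero entry introduced at step $i$ is negative when $i$ is odd and positive when $i$ is even, with the symmetric statement for the $\yy$-chain. By Claim~\ref{claim:prog}, for each $i\in\{\gamma+1,\ldots,n\}$ there is a unique coordinate $a_i\in[n_{j_0}]$ at which $\xx_i(j_0)$ and $\xx_{i-1}(j_0)$ differ; let $\epsilon_i\in\{+,-\}$ denote the new sign at $a_i$, and define $b_i$ and $\eta_i$ analogously for the $\yy$-chain. Each $a_i$ was $0$ in $\xx_\gamma(j_0)$, so $a_i\notin S\cup T$, and the $a_i$'s are pairwise distinct since a fresh zero coordinate is turned nonzero at every step. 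Once it is proved that $\epsilon_i=-$ for odd $i\geq\gamma+1$ and $\epsilon_i=+$ for even $i\geq\gamma+2$, the explicit form of the sets $\xx_i(j_0)^\pm$ stated in the claim will follow by composing the one-step updates starting from $\xx_\gamma(j_0)^+=S$ and $\xx_\gamma(j_0)^-=T$; the $\yy$-case is then analogous with the roles of $S$ and $T$ swapped, using $\yy_\gamma=-\xx_\gamma$.

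The preparatory step is to restate $\lambda(\xx_i)=(-1)^ii$ in terms of $c^+$ and $c^-$. For every $i\geq\gamma+1$ we have $v(\xx_i)=i>n-t$, so by Claim~\ref{claim:vt} the vector $\xx_i$ falls in the second case of the definition of $\lambda$ and thus $c(\xx_i)=i-\gamma$. Since $\gamma=n-t$ was arranged to be even, the sign $s(\xx_i)=(-1)^i$ identifies which of $c^\pm(\xx_i)$ attains this maximum: for odd $i\geq\gamma+1$ we get $c^-(\xx_i)=i-\gamma$ and $c^+(\xx_i)<i-\gamma$, and for even $i\geq\gamma+2$ the roles of $+$ and $-$ are reversed. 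The same holds for the $\yy$-chain whenever $i\neq\gamma$, since $\lambda(\yy_i)=(-1)^ii$ for such indices.

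The core is then a short induction on $i$. Suppose $i\geq\gamma+1$ is odd and, for a contradiction, $\epsilon_i=+$; then $\xx_i(j_0)^-=\xx_{i-1}(j_0)^-$, so $c^-(\xx_i)=c^-(\xx_{i-1})$. If $i=\gamma+1$, this common value equals $c^-(\xx_\gamma)=-\infty$ because $A_{j_0}(\xx_\gamma)=\varnothing$, contradicting $c^-(\xx_i)=i-\gamma\geq 1$; if $i\geq\gamma+3$, the inductive hypothesis applied at the even index $i-1$ gives $c^-(\xx_{i-1})<i-1-\gamma<i-\gamma$, again a contradiction. Hence $\epsilon_i=-$, and the symmetric argument handles even $i\geq\gamma+2$. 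Transplanted to the $\yy$-chain, the identical reasoning yields the corresponding identities for $b_i$ and $\eta_i$. I expect the only delicate point to be the base case $i=\gamma+1$, which requires invoking the first-case definition of $\lambda$ at $\xx_\gamma$ (the only element of either chain still in that case) and unpacking $A_{j_0}(\xx_\gamma)=\varnothing$ to derive $c^\pm(\xx_\gamma)=-\infty$; every later step uses only the uniform formula $v(\xx_i)=n-t+c(\xx_i)$ together with the fact that a one-coordinate update changes exactly one of $c^+$ or $c^-$.
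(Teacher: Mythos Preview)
Your argument is correct and follows essentially the same route as the paper's proof: define $a_i$ via Claim~\ref{claim:prog}, use that $\gamma$ is even so $s(\xx_i)=(-1)^i$ determines which of $c^\pm(\xx_i)$ equals $i-\gamma$, and then rule out the wrong sign for $\epsilon_i$ by observing that the relevant $c^\mp$ could not have increased. The paper phrases the inductive step positively (the new edge of color $i-\gamma$ must contain $a_i$, hence $a_i$ lands on the correct side), whereas you phrase it as a contradiction on $c^-(\xx_i)=c^-(\xx_{i-1})$; these are the same inference.
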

\begin{proof-claim}
According to Claim~\ref{claim:prog}, $\xx_{i-1}(j_0)$ and $\xx_i(j_0)$ differ by exactly one entry for $i\in\{\gamma+1,\ldots,n\}$. Define $a_i$ to be the entry index of $\xx_{i-1}(j_0)$ that becomes nonzero in $\xx_i(j_0)$.

We have $s(\xx_{\gamma+1})=-$ and $v(\xx_{\gamma+1})=\gamma+1$. It implies that there is an edge of $\H_{j_0}$ in $\xx_{\gamma+1}^-$ that gets the color $1$. Since $A_{j_0}(\xx_{\gamma})=\varnothing$, this edge contains necessarily $a_{\gamma+1}$ and thus $\xx_{\gamma+1}^-(j_0)=T\cup\{a_{\gamma+1}\}$.

We have $s(\xx_{\gamma+2})=+$ and $v(\xx_{\gamma+2})=\gamma+2$. It implies that there is an edge of $\H_{j_0}$ in $\xx_{\gamma+2}^+$ that gets the color $2$. Since $A_{j_0}(\xx_{\gamma+1})=\{-\}$ (this has just been proved), this edge contains necessarily $a_{\gamma+2}$ and thus $\xx_{\gamma+2}^+(j_0)=S\cup\{a_{\gamma+2}\}$.

For odd $i\geq\gamma+3$, we have $s(\xx_i)=-$ and $v(\xx_i)=i$. It implies that there is an edge of $\H_{j_0}$ in $\xx_i^-$ that gets the color $i-\gamma$. Since the map $v$ is obtained by taking the maximum possible color, we know that neither $\xx_{i-1}^-$, nor $\xx_{i-1}^+$ contain an edge with color $i-\gamma$. It implies that the $a_i$th entry of $\xx_i(j_0)$ is a $-$. The case with even $i\geq\gamma+4$ is treated similarly.

The statement for the $\yy_i(j_0)$'s is obtained with almost the same proof, once being noted that $\yy_{\gamma}(j_0)^+=T$ and $\yy_{\gamma}(j_0)^-=S$.
\end{proof-claim}

For odd $i\geq\gamma+1$, there is an edge $e_{j_0}^i\in E(\H_{j_0})$ such that $e_{j_0}^i\subseteq T\cup\{a_i\}$. Indeed, let $\zz$ be the sign vector of $\{+,-,0\}^{n_{j_0}}$ obtained from $\xx_{\gamma}(j_0)$ by making its $a_i$th entry a $-$. We have then $\alt(\zz)\geq\alt(\xx_{\gamma}(j_0))=\alt(\H_{j_0})$ and $|\zz|>\alt(\H_{j_0})$. Since $\H_{j_0}$ is nice and since neither $S$, nor $T$ contains an edge of $\H_{j_0}$, we get the claimed existence of the edge. The same holds for $S\cup\{b_i\}$: there is an edge $f_{j_0}^i\in E(\H_{j_0})$ such that $f_{j_0}^i\subseteq S\cup\{b_i\}$. For even $i\geq\gamma+2$, we have similarly the existence of edges $e_{j_0}^i,f_{j_0}^i\in E(\H_{j_0})$ such that $e_{j_0}^i\subseteq T\cup\{b_i\}$ and $f_{j_0}^i\subseteq S\cup\{a_i\}$. Note that since $\H$ has no singleton, the edges $e_{j_0}^i$ and $f_{j_0}^i$ are distinct.

Because of Claim~\ref{claim:vt}, we know also that there exist edges $e_j,f_j\in E(\H_j)$ such that $e_j\subseteq\xx_{\gamma}(j)^-$ and $f_j\subseteq\xx_{\gamma}(j)^+$ for every $j\neq j_0$, since $|A_j(\xx_{\gamma})|=2$.

We define now $2t$ vertices of $\KG(\H_1)\times\cdots\times\KG(\H_s)$. We will check that they induce a graph containing a colorful copy of $K_{t,t}^*$. For $i=\gamma+1,\ldots,n$, we define
\begin{itemize}
\item $\ee_i$ to be the $s$-tuple whose $j$th entry is $e_j$, except the $j_0$th one, which is $e_{j_0}^i$.
\item $\ff_i$ to be the $s$-tuple whose $j$th entry is $f_j$, except the $j_0$th one, which is $f_{j_0}^i$.
\end{itemize}
They are vertices of $\KG(\H_1)\times\cdots\times\KG(\H_s)$.
\begin{claim}\label{claim:aisb}
For each $i\in\{\gamma+1,\ldots,n\}$, we have $a_i=b_i$ (defined according to Claim~\ref{claim:defab}) and
$c(\ee_i)=c(\ff_i)=i-\gamma$.
\end{claim}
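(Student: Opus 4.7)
The plan is to prove the claim by induction on $i$, running from $i = \gamma+1$ up to $i = n$, establishing the two assertions ($a_i = b_i$ and $c(\ee_i) = c(\ff_i) = i-\gamma$) simultaneously.

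The first step I would take, for each $i$ independently of the induction, is to establish the upper bounds $c(\ee_i), c(\ff_i) \leq i-\gamma$. For odd $i$, all components of $\ee_i$ sit inside $\xx_i^-$: one has $e_j \subseteq \xx_\gamma(j)^- = \xx_i(j)^-$ for $j \neq j_0$, and $e_{j_0}^i \subseteq T \cup \{a_i\} \subseteq \xx_i(j_0)^-$. Hence $\ee_i$ is a candidate tuple for $c^-(\xx_i)$. Since $v(\xx_i) = i > n-t$ falls in the second case of the definition of $\lambda$ and $s(\xx_i) = -$, one has $c^-(\xx_i) = v(\xx_i) - (n-t) = i-\gamma$, which yields $c(\ee_i) \leq i-\gamma$. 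Analogously, $\ff_i$ sits inside $\yy_i^-$ (using $\yy_i(j_0)^- = S \cup \{b_{\gamma+1}, \ldots, b_i\} \supseteq f_{j_0}^i$ and $\yy_i(j)^- = \xx_\gamma(j)^+ \supseteq f_j$ for $j \neq j_0$), so $c(\ff_i) \leq c^-(\yy_i) = i-\gamma$. The even case is symmetric, the relevant inclusions becoming $\ff_i \subseteq \xx_i^+$ and $\ee_i \subseteq \yy_i^+$.

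For the inductive step, suppose $a_{i''} = b_{i''}$ and $c(\ee_{i''}) = c(\ff_{i''}) = i''-\gamma$ for all $\gamma+1 \leq i'' < i$. I would show that $\ee_i$ and $\ff_{i'}$ are adjacent in $\KG(\H_1) \times \cdots \times \KG(\H_s)$ for every $\gamma+1 \leq i' < i$. For $j \neq j_0$, the edges $e_j$ and $f_j$ lie in opposite sides of $\xx_\gamma(j)$ and are disjoint. For $j = j_0$, the intersection $e_{j_0}^i \cap f_{j_0}^{i'}$ reduces (using $S \cap T = \varnothing$ and $a_i, a_{i'}, b_i, b_{i'} \notin S \cup T$) to a singleton-singleton intersection involving one of $a_i, b_i$ and one of $a_{i'}, b_{i'}$ depending on parities; rewriting everything in $a$'s (or in $b$'s) via the inductive identity $a_{i''} = b_{i''}$ for $i'' < i$, and invoking the distinctness of the $a_i$'s (resp. $b_i$'s) from Claim~\ref{claim:defab}, this intersection is empty. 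Properness of $c$ then gives $c(\ee_i) \neq c(\ff_{i'}) = i'-\gamma$ for each $i' < i$, so $c(\ee_i) \notin \{1, \ldots, i-1-\gamma\}$; combined with $c(\ee_i) \leq i-\gamma$ and $c(\ee_i) \geq 1$, this forces $c(\ee_i) = i-\gamma$. By symmetry $c(\ff_i) = i-\gamma$.

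Finally, since $c(\ee_i) = c(\ff_i)$ and $c$ is proper, $\ee_i$ and $\ff_i$ are not adjacent; as the $j \neq j_0$ components are already disjoint, $e_{j_0}^i \cap f_{j_0}^i$ must be nonempty, which (via the same reduction as above) forces $a_i = b_i$. The base case $i = \gamma+1$ needs no inductive hypothesis: $c(\ee_{\gamma+1}) \leq 1$ together with $c \geq 1$ directly yields $c(\ee_{\gamma+1}) = c(\ff_{\gamma+1}) = 1$, and the same properness argument produces $a_{\gamma+1} = b_{\gamma+1}$. The main delicate point is the parity bookkeeping in the $j_0$-component intersection, since for each of the four parities of $(i, i')$ the relevant singletons involve different mixes of $a$'s and $b$'s; the induction hypothesis is what unifies these into intersections among $a$'s only (or $b$'s only) so that the distinctness from Claim~\ref{claim:defab} applies.
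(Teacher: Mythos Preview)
Your proposal is correct and follows essentially the same route as the paper: an induction on $i$ that first bounds $c(\ee_i),c(\ff_i)\le i-\gamma$ via the inclusions $\ee_i\subseteq\xx_i^-$ (odd $i$) or $\ee_i\subseteq\yy_i^+$ (even $i$), then uses adjacency of $\ee_i$ to the earlier $\ff_{i'}$'s to force equality, and finally infers $a_i=b_i$ from $c(\ee_i)=c(\ff_i)$. The only cosmetic difference is that the paper handles $i=\gamma+1$ and $i=\gamma+2$ as two separate base cases before launching the induction at $i\ge\gamma+3$, whereas you treat $i=\gamma+2$ directly within the inductive step; your organization is perfectly fine.
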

\begin{proof-claim}

We first prove the cases $i=\gamma+1$ and $i=\gamma+2$. Let us start with $i=\gamma+1$. By definition of $\ee_{\gamma+1}$, we have $1\leq c(\ee_{\gamma+1})\leq c(\xx^-_{\gamma+1})$. Since $\lambda(\xx_{\gamma+1})=-(\gamma+c(\xx_{\gamma+1}))=-(\gamma+1)$, the equality $c(\xx^-_{\gamma+1})=1$ holds, and hence $c(\ee_{\gamma+1})=1$. Similarly, since $\lambda(\yy_{\gamma+1})=-(\gamma+c(\yy_{\gamma+1}))=-(\gamma+1)$, the equality $c(\ff_{\gamma+1})=1$ holds. 
The fact that $\ee_{\gamma+1}$ and $\ff_{\gamma+1}$ get the same color implies moreover that they are not adjacent as the vertices of $\KG(\H_1)\times\cdots\times\KG(\H_s)$. 
Since $e_j$ and $f_j$ are disjoint for $j\neq j_0$, it implies that $e_{j_0}^{\gamma+1}\cap f_{j_0}^{\gamma+1}\neq\varnothing$, which implies that $a_{\gamma+1}=b_{\gamma+1}$, as $S$ and $T$ are disjoint.

Now, let us look at the case $i=\gamma+2$. By definition of $\ee_{\gamma+2}$, we have $1\leq c(\ee_{\gamma+2})\leq c(\yy^+_{\gamma+2})$. Since $v(\yy_{\gamma+2})=\gamma+2$, the inequality $c(\yy^+_{\gamma+2})\leq 2$ holds, and hence $c(\ee_{\gamma+2})\in\{1,2\}$. Similarly, $c(\ff_{\gamma+2})\in\{1,2\}$. Since $e_j$ and $f_j$ are disjoint for $j\neq j_0$ and since $e_{j_0}^{\gamma+2}$ and $f_{j_0}^{\gamma+1}$ are disjoint, $\ee_{\gamma+2}\ff_{\gamma+1}$ is an edge of $\KG(\H_1)\times\cdots\times\KG(\H_s)$ and thus $c(\ee_{\gamma+2})\neq 1$. Therefore $c(\ee_{\gamma+2})=2$. Similarly, $c(\ff_{\gamma+2})=2$. Now, we use the same technique as for $i=\gamma+1$ to prove that $a_{\gamma+2}=b_{\gamma+2}$: the fact that $\ee_{\gamma+2}$ and $\ff_{\gamma+2}$ get the same color implies that $e_{j_0}^{\gamma+2}$ and $f_{j_0}^{\gamma+2}$ get a nonempty intersection. This is possible only if $a_{\gamma+2}=b_{\gamma+2}$.

For the remaining values of $i$, we proceed by induction. Let $i$ be an odd integer such that $i\geq\gamma+3$. We have $1\leq c(\ee_i)\leq c(\xx^-_i)$. Since $v(\xx_i)=i$, we get on the one hand $c(\ee_i)\leq i-\gamma$.
On the other hand, the induction implies that $\{a_{\gamma+1},\ldots,a_{i-1}\}=\{b_{\gamma+1},\ldots,b_{i-1}\}$. Thus $e_{j_0}^i$ and $f_{j_0}^{i'}$ are disjoint for $i'=\gamma+1,\ldots,i-1$. Since $e_j$ and $f_j$ are disjoint for $j\neq j_0$, we get that $\ee_i$ and $\ff_{i'}$ are adjacent as the vertices of $\KG(\H_1)\times\cdots\times\KG(\H_s)$ for $i'=\gamma+1,\ldots,i-1$. Then the induction implies  that $c(\ee_i)\geq i-\gamma$. Therefore, $c(\ee_i)=i-\gamma$. Similarly, we prove $c(\ff_i)=i-\gamma$. Now, we use the same technique again to prove $a_i=b_i$: the fact that $\ee_i$ and $\ff_i$ get the same color implies that $e_{j_0}^i$ and $f_{j_0}^i$ get a nonempty intersection. This is possible only if $a_i=b_i$. The case when $i$ is an even integer such that $i\geq \gamma+4$ is dealt with similarly.
\end{proof-claim}

The vertices $\ee_i$  and $\ff_{i'}$ of $\KG(\H_1)\times\cdots\times\KG(\H_s)$ are adjacent when $i\neq i'$. Indeed, $e_j$ and $f_j$ are obviously disjoint for every $j\neq j_0$, and so are $e_{j_0}^i$ and $f_{j_0}^{i'}$, as we explain now. For odd $i$, we have $e_{j_0}^i\subseteq S\cup\{a_i\}$ while $f_{j_0}^{i'}\subseteq T\cup\{b_{i'}\}$, with $b_{i'}=a_{i'}\neq a_i$ (this is a consequence of Claim~\ref{claim:aisb}), and we have $e_{j_0}^i$ and $f_{j_0}^{i'}$ disjoint. For even $i$, we have $e_{j_0}^i\subseteq S\cup\{b_i\}$ while $f_{j_0}^{i'}\subseteq T\cup\{a_{i'}\}$, with $a_{i'}=b_{i'}\neq b_i$ (this is again a consequence of Claim~\ref{claim:aisb}), and we have $e_{j_0}^i$ and $f_{j_0}^{i'}$ disjoint, again. 
Moreover since $\ee_i$ and $\ff_i$ are distinct for every $i$, they induce a subgraph containing a $K_{t,t}^*$ in $\KG(\H_1)\times\cdots\times\KG(\H_s)$. The statement about the colors of this $K_{t,t}^*$ is also a consequence of Claim~\ref{claim:aisb}.
\end{proof}

\subsection*{Remark}
Theorem~\ref{thm:main} remains true under the weaker condition that $n_j-\alt(\H_j) \geq t$ for all $j$ and that the $\H_j$'s for which the equality holds are nice. Indeed, when $t\geq 3$, we use the fact that the $\H_j$'s are nice only twice: in the proof of Claim~\ref{claim:vt} and right after the proof of Claim~\ref{claim:defab}, and in both places $j=j_0$ (which is such that $\chi(\KG(\H_{j_0}))=t$). When $t\leq 2$, we also use the fact that the $\H_j$'s are nice only for those $j$ such that  $\chi(\KG(\H_j))=t$.

%\begin{remark}\label{rk:gamma_even}
%We finish this section with the explanation of why we can assume without loss of generality that $\gamma$ is even in the preceding proof. Actually, it would be very easy to proceed similarly for the case $\gamma$ odd, by almost ``copy-pasting'' the proof of the even case. However, we find more interesting to proceed as follows.
%Suppose that  $\gamma=\alt(\H_{j_0})+\sum_{j\neq j_0} n_j$ is odd. Define $\H'$ to be the hypergraph obtained from $\H_{j_0}$ by simply adding an isolated vertex $n_{j_0}+1$ to $\H_{j_0}$ (i.e., a vertex contained in no edges). The hypergraph $\H'$ is nice with respect to the identity map and $\alt'(\H')=\alt(\H_{j_0})+1$.
%Since $\KG(\H_{j_0})$ is isomorphic to $\KG(\H')$, the graph $\KG(\H_1)\times\cdots\times\KG(\H_s)$ obtained by replacing $\H_{j_0}$ by $\H'$ is isomorphic to the original one.
%Working with this new graph makes $\gamma$ be equal to
%$\alt(\H')+\sum_{j\neq j_0} n_j=1+\alt(\H_{j_0})+\sum_{j\neq j_0} n_j$, which is even.
%\end{remark}}

%\begin{remark}If $\H$ is nice, then $\KG(\H)$ cannot be complete.\end{remark}

\section{Hypergraphs $\H$ such that $\chi(\KG(\H))=\cd_2(\H)$}\label{sec:cd}

Motivated by Corollary~\ref{cor:circ}, we try to better understand the hypergraphs $\H$ such that $\chi(\KG(\H))=\cd_2(\H)$. We found this question interesting for its own sake. 

%We are not able to say many things on that hypergraphs, even when there are graphs.

%{\blue We explain how to build an infinite family of hypergraphs $\H$ for which the equality holds. We provide a necessary condition for a graph $G$ to be such that $\chi(\KG(G))=\cd_2(G)$, via an elementary proof of Dol'nikov's theorem (Equation~\eqref{eq:dolnikov}) in this case. }

\subsection{A construction}

Let $n\geq 2k-1$ and $m\geq 1$. We define the set system
$$F_{n,m,k}={[n] \choose k}\cup\big\{\{i,j\}:\;i\in[n],\;j\in[n+m]\setminus[n]\big\}\cup{[n+m]\setminus[n] \choose k}.$$

\begin{proposition}
The hypergraph $\H_{n,m,k}$ with vertex set $[n+m]$ and edge set $F_{n,m,k}$ is such that $\chi(\KG(\H_{n,m,k}))=\cd_2(\H_{n,m,k})=n+m-2k+2$.
\end{proposition}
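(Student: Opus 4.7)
My plan is to prove both equalities by combinatorial arguments: compute $\cd_2(\H_{n,m,k})$ directly from the definition, invoke Dol'nikov's inequality~\eqref{eq:dolnikov} to obtain $\chi(\KG(\H_{n,m,k}))\geq\cd_2(\H_{n,m,k})$, and close the loop with an explicit proper $(n+m-2k+2)$-colouring of $\KG(\H_{n,m,k})$.

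For the colorability defect, the upper bound $\cd_2\leq n+m-2k+2$ is witnessed by $U_0=[n-k+1]\cup[n+1,n+m-k+1]$: only $k-1$ vertices of $[n]$ and $k-1$ vertices of $[n+m]\setminus[n]$ survive, so no $k$-subset edge of either side remains and every surviving pair is bichromatic under the $2$-colouring that paints the remaining $[n]$-vertices red and the remaining $([n+m]\setminus[n])$-vertices green. For the matching lower bound I would analyse an arbitrary feasible $U$ in two cases. If $U\supseteq[n]$ (or symmetrically $U\supseteq[n+m]\setminus[n]$), the remaining hypergraph contains the full $k$-uniform Kneser family on the opposite side, and $|U|\geq n+m-2k+2$ follows from $n\geq 2k-1$ together with the standard fact $\cd_2\bigl(\binom{[N]}{k}\bigr)=\max(N-2k+2,0)$. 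Otherwise both $[n]\setminus U$ and $([n+m]\setminus[n])\setminus U$ are non-empty, so the pair-edges force every proper $2$-colouring of the remaining hypergraph to assign one colour to all remaining $[n]$-vertices and the opposite colour to all remaining $([n+m]\setminus[n])$-vertices; the $k$-subset edges then impose $|[n]\setminus U|\leq k-1$ and $|([n+m]\setminus[n])\setminus U|\leq k-1$, giving $|U|\geq n+m-2k+2$.

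For the chromatic upper bound I would construct an explicit proper colouring with $t:=n+m-2k+2$ colours, splitting into two regimes. In the regime $m\geq 2k-1$, set $c(S)=\min S$ for $S\in\binom{[n]}{k}$, $c(\{i,j\})=i$ for a pair with $i\in[n]$, and $c(T)=\min(\min T,t)$ for $T\in\binom{[n+m]\setminus[n]}{k}$; the colour ranges $[1,n-k+1]$, $[1,n]$, and $[n+1,t]$ all fit into $[1,t]$. In the regime $m\leq 2k-2$, use the Lov\'asz colouring $c(S)=\min(\min S,n-2k+2)$ on the $[n]$-side and shift the remaining classes into $[n-2k+3,t]$: $c(\{i,j\})=j-2k+2$ and $c(T)=\min T-2k+2$ (vacuous when $Z$ is empty). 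In both regimes the crucial observation is that two edges sharing a colour must also share the vertex defining that colour class---the minimum element for an $X$- or $Z$-edge, or the designated $[n]$- or $([n+m]\setminus[n])$-endpoint of a pair---so they cannot be disjoint in $\H_{n,m,k}$ and are therefore not adjacent in $\KG(\H_{n,m,k})$.

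The main subtlety I expect is verifying properness inside the ``clamped'' colour classes where several edges share a single colour: for the $Z$-clamp in the first regime the set of vertices with clamped colour lies in $[t,n+m]$, which has only $2k-1$ elements, so any two $k$-subsets of it must intersect by the Lov\'asz pigeonhole; the analogous pigeonhole controls the $X$-clamp in the second regime, and for $m\leq 2k-2$ the hypergraph $\binom{[n+m]\setminus[n]}{k}$ is already intersecting by a sheer size count (any two $k$-subsets of a set of size at most $2k-2$ must share an element), so no further verification is required there. Combining the explicit colouring with Dol'nikov's inequality and the direct computation of $\cd_2$ then yields both equalities simultaneously.
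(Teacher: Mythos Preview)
Your proposal is correct and follows the same overall strategy as the paper: establish the lower bound $\cd_2(\H_{n,m,k})\geq n+m-2k+2$ by the same three-case analysis (delete all of one side, delete all of the other side, or delete neither side completely, the pair edges then forcing each side to be monochromatic), invoke Dol'nikov's inequality, and exhibit an explicit proper colouring with $n+m-2k+2$ colours.

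The only noteworthy difference is in the colouring. The paper uses a single colouring valid for all $m\geq 1$: an edge $A\subseteq[n]$ receives $\min(\min A,\,n-2k+2)$, and any other edge receives $\min\bigl(A\cap([n+m]\setminus[n])\bigr)$, using the $n+m-2k+2$ values in $[1,n-2k+2]\cup[n+1,n+m]$. This avoids your regime split at $m=2k-1$ entirely and makes the properness check shorter (every non-$[n]$ edge simply contains its colour). Your two-regime colouring is also correct, but more elaborate than necessary. A minor remark: your witness $U_0$ for the upper bound on $\cd_2$ is suboptimal when $m<k-1$ (it has size $n-k+1>n+m-2k+2$), but this does not affect the argument since the inequality $\cd_2\leq\chi\leq n+m-2k+2$ already closes the loop.
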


\begin{proof}
We color $\KG(\H_{n,m,k})$ as follows. Let $A$ be one of its vertices. If $A\subseteq[n]$, we give to $A$ the color $\min(\min A, n-2k+2)$ (usual coloring of Kneser graphs). Otherwise, we give to $A$ the color $\min(A\cap[n+m]\setminus[n])$. This is clearly a proper coloring with $n+m-2k+2$ colors.

To obtain a 2-colorable hypergraph from $\H_{n,m,k}$, there are three possibilities, which we study in turn. Either we delete $[n+m]\setminus[n]$ completely, or we delete $[n]$ completely, or we delete none of them completely.

 If we delete $[n+m]\setminus[n]$ completely, we still have to delete at least $n-2k+2$ elements of $[n]$. If we delete $[n]$ completely and $m\geq 2k-1$, then we still have to delete $m-2k+2$ vertices on the other part. If we delete $[n]$ completely and $m< 2k-1$, then $n\geq n+m-2k+2$, and so we again delete at least $n+m-2k+2$ points. Now assume that we do not delete either part completely. Then we have to delete all but $k-1$ vertices on both parts, which again gives $n+m-2k+2$ vertices for deletion or more if $m<k-1$. This proves $\cd_2(\H_{n,m,k})\geq n+m-2k+2$.

Dol'nikov's theorem, i.e., Equation~\eqref{eq:dolnikov}, allows to conclude.
\end{proof}

\subsection{When $\H$ is a graph}

We provide in this section a necessary condition for a graph $G$ to be such that $\chi(\KG(G))=\cd_2(G)$, via an elementary proof of Dol'nikov's theorem (Equation~\eqref{eq:dolnikov}) in this case.

Let $G$ be a graph. A proper coloring of $\KG(G)$ is a partition of the edges of $G$ into triangles $T_i$ and stars $S_j$. The quantity $\cd_2(G)$ is the minimal number of vertices to remove from $G$ so that the remaining vertices induce a bipartite graph.

\begin{proposition}
Let $G$ be a graph such that $\chi(\KG(G))=\cd_2(G)$. Then there is an optimal proper coloring of $\KG(G)$ having at least one triangle and whose triangles $T_i$ are pairwise vertex disjoint.
\end{proposition}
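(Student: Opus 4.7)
The plan is in two steps: first I show some triangle class must exist in every optimal coloring, then I derive pairwise vertex disjointness.

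Each color class of a proper coloring of $\KG(G)$ is a set of pairwise intersecting edges of $G$, hence either a star or a triangle. In an optimal coloring, two star classes with the same center could be merged into one, so star classes have pairwise distinct centers. Suppose for contradiction some optimal coloring has no triangle: its $t$ stars have distinct centers $v_1,\ldots,v_t$ covering every edge of $G$, and removing $v_1,\ldots,v_{t-1}$ leaves only edges of the star at $v_t$, a bipartite subgraph. This yields $\cd_2(G)\le t-1$, contradicting the hypothesis. Hence every optimal coloring contains at least one triangle.

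Now fix any optimal coloring with $S$ stars of distinct centers $U_S$ and $T\ge 1$ triangles $T_1,\ldots,T_T$, so $S+T=t$. Any two triangle classes share at most one vertex, since sharing two would force a common edge to lie in both. Let $G''=T_1\cup\cdots\cup T_T$ denote the subgraph on the triangle-class edges. Deleting $U_S$ from $G$ kills every star-class edge and leaves a subgraph of $G''$; therefore
\[
\cd_2(G)\;\le\;S+\cd_2(G\setminus U_S)\;\le\;S+\cd_2(G''),
\]
and combined with $\cd_2(G)=t=S+T$ this gives $\cd_2(G'')\ge T$.

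The conclusion will follow from the structural claim: \emph{if $H$ is the union of $T$ triangles pairwise sharing at most one vertex, and at least two of them share a vertex, then $\cd_2(H)\le T-1$.} Given this, $\cd_2(G'')\ge T$ forces the $T_i$ to be pairwise vertex disjoint. I would prove the claim by induction on $T$: the base case $T=2$ is immediate (deleting the shared vertex leaves two disjoint edges), and in the inductive step I pick a vertex $v$ lying in at least two triangles and delete it, collapsing those triangles to pairwise disjoint edges and allowing the induction hypothesis to handle the remaining ones (or the trivial case when they are pairwise disjoint).

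The main obstacle is ensuring that the residual graph remains bipartite throughout the inductive construction. Since each triangle contributes at most one edge to the residual, any odd cycle in the residual must involve at least three distinct triangles overlapping cyclically along the cycle vertices. Exploiting the ``pairwise share at most one vertex'' hypothesis, I plan to re-route the hitting set around such an offending odd cycle---replacing the witnesses of the triangles by cycle vertices (of which one can always choose every other vertex of an odd cycle to cover the corresponding triangles)---to obtain a hitting set of size at most the original but whose residual contains strictly fewer odd cycles; iterating this produces a hitting set of size at most $T-1$ with bipartite residual, completing the proof.
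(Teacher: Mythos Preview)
Your first paragraph (every optimal coloring has a triangle) is correct and matches the paper's argument. Your reduction $\cd_2(G'')\ge T$ is also correct. However, the structural claim---that an edge-disjoint union of $T$ triangles with at least one shared vertex has $\cd_2\le T-1$---is where your proof breaks down. The inductive sketch and the ``re-routing'' argument are not a proof. Concretely: when you find an odd cycle $C$ of length $2k{+}1$ in the residual, you propose replacing the $2k{+}1$ triangle-witnesses by $k{+}1$ cycle vertices. But those $2k{+}1$ witnesses need not be distinct (several triangles on $C$ may share a third vertex), and a witness may simultaneously cover triangles \emph{not} on $C$; removing it forces you to re-cover those. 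So the swap does not obviously keep the hitting set at size $\le T-1$. Moreover, you assert the new residual has ``strictly fewer odd cycles'', but destroying $C$ can easily create new odd cycles elsewhere, and you give no potential function guaranteeing termination. Without these pieces the argument is only a plan, not a proof---and note that you are implicitly trying to prove the stronger statement that \emph{every} optimal coloring has vertex-disjoint triangles, which the proposition does not assert.

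The paper takes a completely different and much shorter route: among optimal colorings it selects one with the \emph{minimum number of triangle classes}, and shows by a recoloring argument that in such a coloring every circuit of $G$ is either one of the $T_i$ or contains a star edge (otherwise a shortest offending circuit can be recolored by stars at its vertices, reducing the number of triangles). With this structural fact in hand, if two triangle classes share a vertex $v$, deleting all star centers, the vertex $v$, and one vertex from each remaining triangle---at most $t-1$ vertices---leaves a graph in which every circuit has lost an edge, hence a forest, contradicting $\cd_2(G)=t$. The key idea you are missing is exactly this choice of a special optimal coloring and the circuit/recoloring lemma, which sidesteps the need for your structural claim about $\cd_2$ of triangle-unions.
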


\begin{proof}
Take an optimal coloring with a minimum number of triangles.
We claim that in such a coloring, every circuit is either a $T_i$, or contains at least one edge belonging to a star color class $S_j$. (Here again, by circuit, we mean a connected graph whose vertices are all of degree $2$.)

Indeed, suppose not. Take the shortest circuit $C$ contradicting this claim. Each edge of $C$ belongs to a $T_i$, and each $T_i$ has at most one edge on $C$ (this second statement is because of the minimality of $C$). We change the coloring as follows. We remove all triangles met by $C$ from the coloring, and for each vertex $v$ of $C$, put the star whose center is $v$. It provides a new coloring, with the same number of colors: the number of triangles that have been removed is equal to the number of vertices of $C$, which is equal to the number of new stars. This contradicts the minimality assumption on the number of triangles.

Now, take an optimal coloring with a minimum number of triangles.

Suppose first for a contradiction that there are no triangles. Then removing the centers of all stars but one, and all edges incident to them, leads to a graph with only one star, which is a bipartite graph.  The number of vertices that have been removed is the number of colors minus $1$, although we have obtained a bipartite graph. This is in contradiction with $\chi(\KG(G))=\cd_2(G)$.

Suppose now, still for a contradiction, that two triangles have a vertex in common.
Remove the centers of the stars $S_j$, and all edges incident to them. Remove also the common vertex to the two triangles, as well as the edges incident to it. For the remaining triangles, pick an arbitrary vertex from each of them and remove also all incident edges to these vertices. The remaining graph is a forest (and is in particular bipartite): every circuit not being a $T_i$ has lost at least one edge when the star centers have been removed, and every triangle $T_i$ has lost at least two edges. The number of vertices that have been removed is the number of colors minus $1$, although we have obtained a bipartite graph. This is again in contradiction with $\chi(\KG(G))=\cd_2(G)$.
\end{proof}

The starting claim in the proof, namely that in an optimal proper coloring with a minimum number of triangles, every circuit is either a triangle, or contains an edge from a star, provides an elementary proof of Dol'nikov's theorem when $\H$ is a graph $G$: removing one vertex per triangle and the center of each star leads necessarily to a forest, and the number of removed vertices is at most $\chi(\KG(G))$. It also shows that if $G$ is such that $\chi(\KG(G))=\cd_2(G)$, then we never have to remove more vertices to get a matching than what we have to remove to get a bipartite graph.

\subsection{Complexity questions}

As we already mentioned, we do not know the complexity status of deciding whether $\chi(\KG(\H))=\cd_2(\H)$, even if $\H$ is a graph. We nevertheless state and prove two related results. The proof of the first proposition is sketched in \cite{Me14}.

\begin{proposition}
Computing $\cd_2(\H)$ is $\NP$-hard, even when $\H$ is a graph.
\end{proposition}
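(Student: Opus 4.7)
The plan is to observe that when $\H$ is a graph $G$, the quantity $\cd_2(G)$ is exactly the minimum number of vertices whose removal leaves a bipartite graph (the \emph{odd cycle transversal} number, since a graph is $2$-colorable in the hypergraph sense precisely when it is bipartite). I would then prove $\NP$-hardness of this parameter by a polynomial reduction from Vertex Cover.

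Given an instance $(G, k)$ of Vertex Cover, I would build an auxiliary graph $H$ on vertex set $V(G) \cup \{x_e : e \in E(G)\}$, with edges $ux_e$ and $vx_e$ for every $e = uv \in E(G)$, so that $\{u, v, x_e\}$ spans a triangle in $H$. The claim is: $G$ has a vertex cover of size at most $k$ iff $\cd_2(H) \leq k$.

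For the easy direction, if $C$ is a vertex cover of $G$ with $|C| \leq k$, then $H \setminus C$ has vertex set $(V(G) \setminus C) \cup \{x_e : e \in E(G)\}$; the set $V(G) \setminus C$ is independent in $H$ (it is independent in $G$ and has no $x_e$'s), and each $x_e$ is adjacent to at most one vertex of $V(G) \setminus C$ since at least one endpoint of $e$ lies in $C$. Hence $H \setminus C$ is a disjoint union of stars, in particular bipartite.

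For the reverse direction, I would first clean up any odd cycle transversal $S$ of $H$ so that $S \subseteq V(G)$: whenever $x_e \in S$ for some $e = uv$ with $u, v \notin S$, one can replace $x_e$ by $u$ without increasing $|S|$, because $x_e$ has degree $2$ in $H$, so every cycle through $x_e$ uses both edges $ux_e$ and $vx_e$ and is therefore destroyed once $u$ is removed. After this cleanup, $S \subseteq V(G)$ must be a vertex cover of $G$: otherwise some edge $e = uv$ of $G$ is uncovered and the triangle $\{u,v,x_e\}$ survives in $H \setminus S$, contradicting bipartiteness. The reduction is clearly polynomial. The only delicate step is the cleanup swap, and even there the argument is elementary because the $x_e$ vertices have degree two; I expect no genuine obstacle beyond verifying that the swap never reintroduces an odd cycle.
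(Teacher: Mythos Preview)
Your proposal is correct and follows the standard textbook reduction showing that the odd cycle transversal number is $\NP$-hard. A small clarification: as written, you only list the new edges $ux_e$ and $vx_e$, but the assertion that $\{u,v,x_e\}$ spans a triangle (and the rest of the argument) makes clear that $H$ also retains the original edges of $G$; it would be cleaner to say this explicitly. Also, the cleanup step as stated only treats those $x_e\in S$ with both endpoints outside $S$; to conclude $S\subseteq V(G)$ you should either start from a minimum-size $S$ (so no $x_e$ with a neighbour already in $S$ can be present), or add the obvious second rule ``if $x_e\in S$ has a neighbour in $S$, delete $x_e$''. These are routine fixes.

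The paper takes a different route: it reduces from Independent Set by letting $H$ be the join of two disjoint copies of a non-bipartite graph $G$ and showing $\cd_2(H)=2(|V(G)|-\alpha(G))$. Both reductions ultimately rest on the $\NP$-hardness of Vertex Cover / Independent Set, but the constructions are unrelated. Your reduction is more direct (it identifies $\cd_2(H)$ exactly with the vertex cover number of $G$) and is the classical argument for the hardness of odd cycle transversal; the paper's join construction is slightly more ad hoc but avoids the cleanup argument by forcing any bipartite induced subgraph that meets both copies to leave only an independent set on each side.
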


\begin{proof}
Let $G=(V,E)$ be a non-bipartite graph. Consider $H$ the graph obtained by taking the join of two disjoint copies of $G$. Now, consider $H'$ an induced bipartite subgraph of $H$. If $H'$ has vertices from both copies of $G$, then it means that in each of these copies a vertex cover of $G$ has been removed. Otherwise, all vertices of one copy and at least $\cd_2(G)$ vertices from the other copy have been removed. Thus, we have $\cd_2(H)=\min(2(|V|-\alpha(G)),|V|+\cd_2(G))$, where $\alpha(G)$ is the independence number of $G$ (remember that $|V|-\alpha(G)$ is the vertex cover number of $G$).

We have
\begin{equation}\label{eq:cd}
|V|\leq\cd_2(G)+2\alpha(G). 
\end{equation} 
Indeed, let $X$ be a subset of vertices removed from $V$ such that $G[V\setminus X]$ is bipartite. Each part of $G[V\setminus X]$ is an independent set of $G$. Thus
$|V|=|X|+|V\setminus X|\leq |X|+2\alpha(G)$ and we get inequality~\eqref{eq:cd}. This inequality implies then that $2(|V|-\alpha(G))\leq|V|+\cd_2(G)$ and consequently
$\cd_2(H) = 2(|V|-\alpha(G))$. Since computing $\alpha(G)$ for a nonbipartite graph is $\NP$-hard, we get the result.
\end{proof}

\begin{proposition}\label{prop:chi}
Computing $\chi(\KG(\H))$ is $\NP$-hard, even when $\H$ is a graph.
\end{proposition}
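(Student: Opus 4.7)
The plan is to reduce from the \textbf{Vertex Cover} problem on arbitrary graphs. The key observation is that when $\H$ is a triangle-free graph $G$, one has
$$\chi(\KG(G)) = \tau(G),$$
where $\tau(G)$ denotes the vertex cover number of $G$. Indeed, any family of pairwise intersecting edges in a triangle-free graph must share a common vertex: three edges that pairwise share distinct vertices would form a triangle. Hence each color class of a proper coloring of $\KG(G)$ is a star. In a coloring of $\KG(G)$ using as few colors as possible, no two color classes can be compatible with a common vertex (otherwise one could merge them), so one may choose for each class a distinct ``center'' vertex; these centers form a vertex cover of $G$ of size $\chi(\KG(G))$. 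Conversely, any vertex cover of size $k$ partitions $E(G)$ into $k$ stars by assigning each edge to one of its endpoints in the cover.

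The second step is the classical trick that transfers the hardness of Vertex Cover to triangle-free graphs via edge subdivision. Given an arbitrary graph $H$, replace each edge $uv$ of $H$ by a path $u\!-\!x_{uv}\!-\!y_{uv}\!-\!v$, obtaining a graph $G$. Then $G$ is triangle-free (a quick case analysis on the three possible vertex types), and a short argument yields
$$\tau(G) = \tau(H) + |E(H)|.$$
The upper bound is obtained from any vertex cover $C$ of $H$ by adding, for each edge $uv$ of $H$, the inner vertex of the subdivision path adjacent to whichever endpoint is not in $C$ (or either inner vertex if both endpoints lie in $C$). The lower bound rests on the fact that each subdivision path forces at least one of its two inner vertices into any vertex cover $C'$ of $G$, and that whenever a path contributes exactly one inner vertex, the corresponding endpoint of $H$ must itself lie in $C'$; tallying the surplus inner vertices against the edges of $H$ left uncovered by $C' \cap V(H)$ delivers the identity.

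Combining the two steps, a polynomial-time algorithm computing $\chi(\KG(\cdot))$ on triangle-free graphs would yield a polynomial-time algorithm for Vertex Cover on general graphs, contradicting its NP-hardness. The only delicate point is the accounting inside the subdivision identity $\tau(G)=\tau(H)+|E(H)|$; the rest of the argument is routine. Triangle-freeness is essential: without it, color classes may be triangles, and the straightforward identification with vertex covers breaks down.
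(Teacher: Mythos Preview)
Your proof is correct and follows essentially the same route as the paper: both rest on the identity $\chi(\KG(G))=\tau(G)$ for triangle-free $G$ and then invoke the $\NP$-hardness of Vertex Cover on triangle-free graphs. The only difference is that the paper simply cites this hardness (Poljak~\cite{Po74}), whereas you re-derive it via the standard double-subdivision argument---which is in fact Poljak's original reduction---so your write-up is a bit longer but self-contained.
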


\begin{proof}
Let $G$ be a triangle-free graph. A coloring of $\KG(G)$ is a partition of the edges of $G$ into stars. Thus $\chi(\KG(G))$ is the vertex cover number of $G$, which is $\NP$-hard to compute, even when $G$ is triangle-free~\cite{Po74}.
\end{proof}

\subsection*{Acknowledgments} We are grateful to G\'abor Simonyi for interesting discussions that were especially beneficial to Section~\ref{sec:cd}. We also thank the reviewers for their thorough reading of our work and for all their remarks. Part of this work was done when the first author was visiting the Universit\'e Paris Est. He would like to acknowledge professor Fr\'ed\'eric Meunier for his generous support and hospitality. Also, the research of Hossein Hajiabolhassan was in part supported by a grant from IPM (No. 94050128).

\bibliographystyle{elsarticle-harv}

\bibliography{Kneser}
\end{document}